\documentclass[11pt]{article}
\usepackage{amssymb,amsmath,amsthm,amsfonts, commath, mathrsfs, mathtools, secdot}
\usepackage[T1]{fontenc}		
\usepackage[a4paper, margin=1in]{geometry}

\usepackage{bbm}
\usepackage{extarrows}
\usepackage{enumerate}
\usepackage{multicol}	
\usepackage{fge}
\usepackage{oldgerm}
\usepackage{hyperref} 
\usepackage{paralist} 
\usepackage{accents}

\usepackage{tikz}
\usetikzlibrary{datavisualization}
\usetikzlibrary{datavisualization.formats.functions}
\usepackage{pgfplots}
\usepackage{tkz-fct}
\pgfplotsset{every axis/.append style={
		axis x line=middle,    
		axis y line=middle,    
		axis line style={<->}, 
		xlabel={$x$},          
		ylabel={$y$},          
	},
	cmhplot/.style={color=blue,mark=none,line width=1pt,<->},
	soldot/.style={color=blue,only marks,mark=*},
	holdot/.style={color=blue,fill=white,only marks,mark=*},
}

\tikzset{>=stealth}

\usepackage[natbib=true, backend=biber, style=numeric, maxbibnames=10]{biblatex}
\usepackage{csquotes}
\addbibresource{Bibliography.bib}

\sectiondot{subsection}
\usepackage{xcolor}

\usepackage{color}


\newtheoremstyle{Assump}%
{3pt}
{3pt}
{\itshape}
{}
{\bfseries}
{.}
{.5em}
{\thmname{#1} \thmnumber{#2} \thmnote{\normalfont#3}}

\newtheorem{theorem}{Theorem}[section]
\newtheorem{lem}[theorem]{Lemma}
\newtheorem{prop}[theorem]{Proposition}
\newtheorem{cor}[theorem]{Corollary}
\theoremstyle{definition}\newtheorem{example}[theorem]{Example}
\theoremstyle{definition}
\theoremstyle{definition}\newtheorem{remark}[theorem]{Remark}
\theoremstyle{definition}\newtheorem{defi}[theorem]{Definition}

\numberwithin{equation}{section}


\newcommand{\ubar}[1]{\underaccent{\bar}{#1}}

\newcommand{\e}{\operatorname{e}}
\newcommand{\N}{\mathbb{N}}
\newcommand{\R}{\mathbb{R}}

\newcommand{\Q}{\mathbb{Q}}
\newcommand{\D}{{\bf D}}											
\newcommand{\C}{{\bf C}}											
\newcommand{\B}{\mathscr{B}}
\newcommand{\F}{\mathbb{F}}											
\newcommand*\diff{\mathop{}\!\mathrm{d}}								
\newcommand{\E}{\mathbb{E}}											
\newcommand{\Pro}{\mathbb{P}}										
\newcommand{\indep}{\perp \!\!\! \perp}									
\newcommand{\ind}{\operatorname{\mathbf{1}}}							
\newcommand{\convw}[1]{\xRightarrow[ {#1}]{}}							
\renewcommand{\max}[1]{\underset{#1}{\operatorname{max}}\;}				
\renewcommand{\min}[1]{\underset{#1}{\operatorname{min}}\;}					
\renewcommand{\sup}[1]{\underset{#1}{\operatorname{sup}}\;}					
\renewcommand{\inf}[1]{\underset{#1}{\operatorname{inf}}\;}					
\renewcommand{\setminus}{\mathbin{\fgebackslash}}						
\newcommand{\union}[2]{\overset{#2}{\bigcup\limits_{#1}}\;}					
\newcommand{\intersection}[2]{\overset{#2}{\bigcap\limits_{#1}}\;}				
\newcommand{\conv}[2]{\; \xrightarrow[ {#1}]{#2} \; }						
    
\newcommand{\dJ}{d_{J_1}}								
\newcommand{\dM}{d_{M_1}}							    


\title{Weak Convergence of Stochastic Integrals on Skorokhod Space in Skorokhod's $J_1$ and $M_1$ Topologies}
\date{July 30, 2025}
\author{Andreas S\o jmark\thanks{London School of Economics, Department of Statistics, London, WC2A 2AE, UK. \texttt{a.sojmark@lse.ac.uk}} \,\; and \, Fabrice Wunderlich\thanks{University of Oxford, Mathematical Institute, Oxford, OX2 6GG, UK. \texttt{wunderlich@maths.ox.ac.uk}}  }

	\date{}


\begin{document}
	\maketitle
\vspace{-9pt}

	\begin{abstract}
	We provide criteria for Itô integration to behave continuously with respect to Skorokhod's $J_1$ and $M_1$ topologies, when the integrands and integrators converge weakly or in probability. The results are novel in the $M_1$ setting and unify existing theories in the $J_1$ case. Beyond sufficient criteria, we present an example of uniformly convergent martingale integrators for which the continuity breaks down. Moreover, we show that, for families of local martingales, $M_1$ tightness in fact implies $J_1$ tightness under a mild localised uniform integrability condition. Finally, we apply our results to study scaling limits of models of anomalous diffusion driven by continuous-time random walks. This yields new results on weak $M_1$ and $J_1$ convergence to stochastic integrals against subordinated stable processes. In the case of superdiffusive scaling, an interesting counterexample is obtained.
	\end{abstract}


\section{Introduction}

The topic of this paper is the weak convergence of stochastic integrals on Skorokhod space with respect to Skorokhod's $J_1$ and $M_1$ topologies. By Skorokhod space, we mean the space of all c\`adl\`ag paths $x:[0,\infty)\rightarrow  \mathbb{R}^d$, for a given dimension $d\geq 1$, which we denote by $\D_{\R^d}[0,\infty)$. We let $\dJ$ and $\dM$ denote a choice of metrics inducing, respectively, the $J_1$ and $M_1$ topology on $\D_{\R^d}[0,\infty)$. For their precise definitions and some key properties, see Appendix \ref{app:J1_M1_tops}. Here we only stress that $\D_{\R^d}[0,\infty)$ is a Polish space when equipped with either of the two topologies.

Consider a sequence of $d$-dimensional semimartingales $(X^n)_{n\geq 1}$ on some filtered probability spaces $(\Omega^n, \mathcal{F}^n, \F^n, \Pro^n)$ and suppose $X^n \Rightarrow X$ on $(\D_{\R^d}[0,\infty),\rho )$ for  $\rho=\dJ$ or $\rho=\dM$. It is then natural to look for conditions such that $X$ is again a semimartingale and we have a form of weak `continuity' for the operation of Itô integration in the sense that
\begin{equation}\label{eq:intro_conv}
	\Bigl(X^n \,, \,\int_0^\bullet \, H^n_{s-} \, \diff X^n_s \Bigr) \;  \; \Rightarrow \; \Bigl(X\,,\,\int_0^\bullet \, H_{s-} \, \diff X_s \Bigr) \quad \text{ on } \quad  (\D_{\R^{2d}}[0,\infty),\rho )
\end{equation}
if also $H^n \Rightarrow H$ on $(\D_{\R^d}[0,\infty) ,  \tilde{\rho}\, )$ with $\tilde{\rho}\in \{\dM, \dJ\}$ and with the $H^n$ adapted to $\mathbb{F}^n$. Here, and throughout, the Itô integrals are defined component-wise.

\begin{remark}[Convergence together]\label{rem:together} Given $Y^n$ in $\D_{\mathbb{R}^k}[0,\infty) $ and $Z^n$ in $\D_{\mathbb{R}^m}[0,\infty)$, we say the pairs $(Y^n,Z^n)$ converge \emph{together} (for $\rho$) if they converge on $(\D_{\mathbb{R}^{k+m}}[0,\infty),\rho)$, while they converge \emph{jointly} (for $\rho$) if it is on the product space $(\D_{\mathbb{R}^{k}}[0,\infty),\rho) \times (\D_{\mathbb{R}^{m}}[0,\infty),\rho)$. Joint convergence can of course be with different metrics $\rho, \tilde{\rho} \in \{\dM, \dJ\}$ for $Y^n$ and $Z^n$.
\end{remark}

Working with the $J_1$ topology, an elegant and effective theory was developed in the seminal papers of Jakubowski, M\'emin, \& Pag\`es \cite{jakubowskimeminpages} and Kurtz \& Protter \cite{kurtzprotter}. In particular, \eqref{eq:intro_conv} holds under the following two conditions: Firstly, the pairs $(H^n,X^n)$ must converge weakly \emph{together} on $(\D_{\R^{2d}}[0,\infty),\dJ )$. Secondly, the semimartingale integrators $X^n$ must enjoy some uniform regularity: \cite{jakubowskimeminpages} enforces the P-UT condition (predictable uniform tightness), originally due to Stricker \cite{stricker}, while \cite{kurtzprotter} introduced the UCV condition (uniformly controlled variations). We will not make use of these conditions, but we recall them in Appendix \ref{app:UCV_and_PUT} for reference.

In this paper, we shall instead rely on a new notion of good decompositions for the integrators, and we will focus on different modes of convergence such as joint weak convergence of the integrands and integrators with respect to $\rho, \tilde{\rho} \in \{\dM, \dJ\}$. We find that the notion of good decompositions is easy to work with and can be helpful in pinpointing precisely what is required of each part of the semimartingale decomposition for the integrators.

We stress that convergence results for stochastic integrals continue to find extensive use. Recent examples with Brownian integrators in the limit include scaling limits of Hawkes processes \cite{Rosenbaum}, convergence to SPDEs such as random perturbations of the Schr\"odinger equation \cite{Pellegrini}, and convergence results for Mean Field Games \cite{carmona-delarue-lacker, Lacker2} and McKean--Vlasov control problems \cite{Lacker_limit}.

\subsection{Convergence in the $M_1$ topology}\label{sect:$J_1$to$M_1$}

While the $J_1$ topology requires the jump times and jump sizes of convergent paths to approximately match those of the limit, the $M_1$ topology is much more flexible. For example, a steeper and steeper continuous ascent may result in a jump, and many smaller jumps of essentially the same sign may morph into one large jump. Still, the topology is strong enough that it is well suited for invariance principles with many key functionals preserving the convergence.

We recall the $M_1$ topology and its key properties in Appendix \ref{app:J1_M1_tops}. Recent applications in which it plays a decisive role include compactness arguments and limit theorems for queuing problems \cite{Mikosch, PangWhitt, ramanan_M1}, variants of the Bouchaud trap model \cite{arous_btm, croydonM1}, dispersing billiards with cusps \cite{Jung_Zhang, Melbourne_Varandas}, the supercooled Stefan problem \cite{Dembo_LiCheng, mercy, Nadtochiy_Skholni2}, and singular controls \cite{Asaf_cohen_M1, Fu_Horst}.

Outside the safe confines of metrizability, Jakubowski \cite{jakubowski2} introduced another topology on Skorokhod space, called the $S$ topology, which is weaker than the $M_1$ topology (see \cite{balanjakubowski}). Part of the motivation for this comes from close connections to the aforementioned P-UT condition (due to results of \cite{stricker}), so Jakubowski relies on this when studying stochastic integral convergence on $\D_{\R}[0,1]$ with the $S$ topology in \cite{jakubowski}. Two of the main results \cite[Thms.~1 and 6]{jakubowski} yield the remarkable fact that $(H^n,X^n)\Rightarrow (H,X)$ on $(\D_{\R}[0,1],S) \times (\D_{\R}[0,1],S)$ is sufficient for
\begin{equation}\label{eq:S_top}
	\int_0^\bullet H^n_{s-} \, \diff X^n_s \; \Rightarrow \; \int_0^\bullet H_{s-} \, \diff X_s \quad \text{on} \quad (\D_{\R}[0,1],S),
\end{equation}
provided the $X^n$ satisfy the P-UT condition and provided the interplay of $H^n$ and $X^n$ respects an additional condition that replaces the convergence together in $J_1$ employed by \cite{jakubowskimeminpages, kurtzprotter}. This latter condition will also play a crucial role in our analysis (for details, see Section \ref{subsect:main_results}).

Notably, \cite{jakubowski} also discusses convergence on $\D_{\R}[0,1]$ for a class of topologies compatible with integration in the sense of the discussion preceding \cite[Thm.~4]{jakubowski}. Though details are not given, it is noted that the $M_1$ topology is compatible with integration in that sense, and so \cite[Thm.~4]{jakubowski} implies that
\eqref{eq:S_top} holds on $(\D_{\R}[0,1],\dM)$ when the integrators $X^n$ are weakly convergent on $(\D_{\R}[0,1],\dM)$ and the relevant conditions are satisfied by $X^n$ and $H^n$.

As far as we are aware, \cite{jakubowski} remains the only work to have studied the general question \eqref{eq:intro_conv} beyond the $J_1$ setting. By this, we mean the analysis of general structural conditions such that Itô integrals preserve the convergence of any given families of weakly convergent integrators and integrands on Skorokhod space. There has, however, been related work on certain fast-slow dynamical systems driven by $M_1$ convergent processes, with \cite{chevryev_etal} establishing the convergence to corresponding stochastic differential equations in a suitable rough path topology, while convergence in the Skorokhod topologies fails. See also \cite{Chevryev24, Melbourne13} for further results.

\subsection{Overview of the paper}

In Section \ref{sect:stoch_int_conv}, we give our main results on the weak `continuity' of Itô integration in the sense of \eqref{eq:intro_conv}. Section \ref{subsect:prelim_setup} introduces two key conditions, including the notion of good decompositions. Section \ref{subsect:main_results} then states the central result for the $M_1$ and $J_1$ topologies, together with some simple sufficient criteria. Section \ref{subsect:probability_and_intregrands} presents variants of the central result, relaxing the weak convergence conditions on the integrands and considering convergence in probability, and gives a simple-to-use result for local martingale integrators that are continuous in the limit.

In Section \ref{sect:martingale_integrators}, we focus on local martingale integrators and their structural properties on Skorokhod space. Section \ref{sect:GD_role_sharpness} constructs what appears to be the first example addressing how \eqref{eq:intro_conv} can fail for sequences of martingale integrators. Next, Section \ref{sect:$M_1$_martingales_and_counterex} presents the remarkable finding that, for local martingales, $M_1$ tightness translates to $J_1$ tightness under a mild condition of localised uniform integrability. Section \ref{sect:preservation_of_local_martingality} shows that a slight strengthening of the latter condition yields the preservation of the local martingale property in the limit.

In Section \ref{sect:avci_alternative}, we consider an alternative condition for controlling the interplay between the integrands and integrators. This is presented in Section \ref{subsect:alternative_criteria} and Section \ref{subsect:concrete_avci_alternative} then shows how this can be helpful for a general class of pure jump processes.

Section \ref{sect:CTRW_applications} applies our framework to study scaling limits in models of anomalous diffusion expressed via stochastic integrals for continuous-time random walks. Section \ref{subsect:weak_cont_CTRW} rectifies some issues in the existing literature and, in certain regimes, establishes new results on $J_1$ and $M_1$ convergence which answer questions raised in \cite{scalas, hahn}. Section \ref{sect:counter} provides a novel counterexample showing that, in certain other regimes, the convergence can fail in a fundamental way.

\section{Convergence of stochastic integrals}
\label{sect:stoch_int_conv}

This section presents a series of general results on the weak convergence of stochastic integrals in the $J_1$ and $M_1$ topologies. Convergence in probability is also addressed.

\subsection{Conditions on the integrands and integrators}\label{subsect:prelim_setup}

In \cite{jakubowski}, Jakubowski introduced a condition for excluding, asymptotically, cases where significant increments of the integrands appear immediately before those of the integrators (a problematic situation where non-vanishing mass of the approximating integrals need not propagate to the limit). This condition relies on the following function adapted from \cite[p.~2144]{jakubowski}.

\begin{defi}[Consecutive increment function] \ \label{defi:3.19} 
	For every $\delta>0$ and $T>0$, we define a function $\hat{w}^T_\delta: \D_{\R^d}[0,\infty) \times \D_{\R^d}[0,\infty) \to \R_+$ of the largest consecutive increment within a $\delta$ period of time on the compact interval $[0,T]$, namely
	\[
	\hat{w}^T_{\delta}(x,y)  := \operatorname{sup}\bigl\{  |x^{(i)}(s)- x^{(i)}(t)| \wedge |y^{(i)}(t)-y^{(i)}(u)|  :   s < t  < u \le s+\delta \leq T, \hspace{0.5pt} 1\leq i \leq d    \bigr\}
	\]
	 where $z^{(i)}$ denotes the $i$-th coordinate of a given $z\in \D_{\R^d}[0,\infty)$. The supremum is of course restricted to positive times $ s\geq 0$ and, as usual, $a\wedge b= \operatorname{min}\{a,b\}$.
\end{defi}

\begin{defi}[Asymptotically vanishing consecutive increments]\label{def:avco} \
	Let $X^n$ and $H^n$ be $d$-dimensional càdlàg processes on given probability spaces $(\Omega^n, \mathbbm{F}^n, \Pro^n)$. We say that the pairs $(H^n, X^n)_{n\ge 1}$ have \emph{asymptotically vanishing consecutive increments} if, for every $\gamma>0$ and $T> 0$,
	\begin{align} 
		\lim\limits_{\delta \downarrow 0}  \; \limsup\limits_{n\rightarrow \infty} \; \Pro^n\bigl( \, \hat w_{\delta}^T(H^n, \, X^n) \; > \; \gamma \, \bigr)  =  0. \tag{AVCI} \label{eq:oscillcond}
	\end{align}
\end{defi}

The condition \ref{eq:oscillcond} is a restatement of \cite[Eq.~(6)]{jakubowski} and it will provide the main handle on the interplay between the integrands and integrators. Finally, we also need the integrators to be sufficiently well-behaved. To this end, we shall rely on the following concept.

\begin{defi}[Good decompositions]\label{defn:GD} Let $(X^n)_{n\geq 1}$ be a sequence of semimartingales on a given family of probability spaces $(\Omega^n, \mathcal{F}^n, \F^n, \Pro^n)$. We say that the sequence has \emph{good decompositions} if, for the given filtrations $\F^n$, there exist decompositions
	\[
	X^n = M^n + A^n,\quad M^n  \text{ local martingales},\quad  A^n \text{ finite variation processes},
	\]
	such that, for every $t>0$, we have 
	\begin{align}\label{eq:Mn_An_condition}\tag{GD}
		\lim_{R\rightarrow \infty}  \limsup_{n\rightarrow\infty } \;  \mathbb{P}^n\bigl(\text{TV}_{[0,t]}(A^n)>R\bigr)=0 \quad \text{and}  \quad
		\limsup_{n\rightarrow \infty} \; \mathbb{E}^n\bigl[\, |\Delta M^n_{t \land \tau^n_c}|\, \bigr] <\infty,
	\end{align}
	for all $c>0$, where $\tau^n_c:= \operatorname{inf}\{s >0: |M^n|^*_s \ge c \}$. 
\end{defi}

Here, $\text{TV}_{[0,t]}(A^n)$ denotes the total variation of $A^n$ on $[0,t]$ and $\Delta M^n_t:= M^n_t - M^n_{t-}$ denotes the jump of $M^n$ at time $t$. Moreover, we have used the notation $|M^n|^*_t:= \operatorname{sup}_{s\in[0,t]} |M^n_s|$.

 If $X^n=M^n+A^n$, where the $M^n$ are local martingales with uniformly bounded jumps and the $A^n$ are of tight total variation, then \eqref{eq:Mn_An_condition} holds. Let us consider a non-trivial example of martingales with diverging jump sizes which are \emph{not} of tight total variation on compacts, but we can directly verify the criterion on the jumps in \eqref{eq:Mn_An_condition}.

\begin{example}\label{ex:martingales_GD_directly}Fix $(\Omega, \mathcal{F}, \Pro)=([0,1], \mathscr{B}([0,1]), \text{{\fontfamily{cmss}\selectfont Leb}})$, and define, for $t\geq0$, the processes
     \begin{equation*}
         A^n_t(\omega)  =   
     n\ind_{[0,\frac 1 n]}(\omega) \ind_{[1,\infty)}(t) +  \sum_{k=1}^{n^2} \Bigl[ n^{2}\ind_{[0,\frac 1{n^{k+1}}]}(\omega) \!-\! \bigl(n^{2} +(-1)^k\bigr) \!\ind_{[\frac 1 n-\frac 1{n^{k+1}},\frac 1 n]}(\omega) \Bigr]\! \ind_{[1+\frac k {n^2},\infty)}(t).
     \end{equation*}
     There is a single large jump up at time $1$ with small probability, and then a growing number of larger jumps up or down with smaller and smaller probability between times $1$ and $2$. The natural right-continuous filtration is obtained from $\mathcal{F}^n_0=\{\emptyset, \Omega\}$, $\mathcal{F}^n_1= \{\emptyset, \Omega,[0,1/n], (1/n,1]\}$, and $\mathcal{F}^n_{1+k/n^2}= \sigma(\{\emptyset, \Omega,[0,1/n], [0,1/n^{\ell+1}], [1/n-1/n^{\ell+1}, 1/n] : \ell=1,...,k\})$, for $k=1,...,n^2$. As in the Doob--Meyer decomposition, we can see that $M^n:=A^n - B^n$ is a martingale, where $B^n$ is the simple right-continuous process generated by $B^n_0\equiv 0$, $B^n_1\equiv 1$, and
     $$ B^n_{1+\frac{k}{n^2}}  = 1+ \frac{1}{n} \quad \text{if } k \text{ is odd},\quad \text{or }\quad  B^n_{1+\frac{k}{n^2}} = 1 \quad \text{if } k \text{ is even} ,\quad\text{for}\quad  k=1,...,n^2. $$
     While $M^n$ converges ucp to $t\mapsto \ind_{[1,\infty)}(t)$, neither $A^n$ nor $B^n$ are of tight total variation on compacts, as $\operatorname{TV}_{[0,2]}(M^n)=\operatorname{TV}_{[0,2]}(B^n)= 1+ n$ on the event $(1/n,1]$. Moreover, we have that $\E[\operatorname{sup}_{0\le s\le 2} |\Delta M^n_s|]\ge \E[|\Delta M^n_{1+k/n^2}|]\ge n \to \infty$ as $n\to \infty$. Nevertheless, 
  given $c>0$, one readily sees that $\E[ |\Delta M^n_{t \land \tau^n_c}|] \le 2$ for all $n\ge c$ and $t\geq1$, due to $|\Delta M^n_{t\wedge \tau^n_c}|=|\Delta M^n_{1}|=n$ on the event $[0,1/n]$, so $M^n$ satisfies the jump condition in \eqref{eq:Mn_An_condition}.
\end{example}

 We shall write $X^n \Rightarrow_{f.d.d.} X$ on $D \subseteq [0,\infty)$ if, for all $m\ge 1$ and $t_1,...,t_m \in D$, we have
$$ \Pro^n \circ (X^n_{t_1},...,X^n_{t_m})^{-1} \; \Rightarrow \; \Pro \circ (X_{t_1},...,X_{t_m})^{-1}\, .$$
If $X^n \Rightarrow X$ for $J_1$ or $M_1$, then this holds on the set $D$ of almost sure continuity points of $X$. We can then recall the following result from \cite{jakubowskimeminpages}, modified to rely on good decompositions.

\begin{prop}[Preservation of the semimartingale property]\label{prop:preserve_semimartin} Let $H^n,X^n$ be adapted càdlàg processes defined on filtered probability spaces $(\Omega^n, \mathcal{F}^n, \F^n, \Pro^n)$. Suppose $(|X^n|^*_t)_{n\ge 1}$ is tight, for all $t>0$, and $(H^n,X^n) \Rightarrow_{f.d.d.} (H,X)$ on a dense subset $D\subseteq [0,\infty)$, for some $H,X \in \D_{\R^d}[0,\infty) $. If the $X^n$ admit good decompositions \eqref{eq:Mn_An_condition}, then $X$ is a semimartingale for the natural filtration generated by the pair $(H,X)$.
\end{prop}

While this is often left implicit in our writing, it is of course crucial that \eqref{eq:Mn_An_condition} holds with respect to the given filtrations $\F^n$ for which the $X^n$ and the $H^n$ are adapted.

\subsection{Weak convergence of stochastic integrals on Skorokhod space}\label{subsect:main_results}

For a given $\R^d$-valued semimartingale $X=(X^{(1)},...,X^{(d)})$ and a given $\R^d$-valued integrand $H=(H^{(1)},...,H^{(d)})$, we define the stochastic integral of $H_-$ against $X$ componentwise, i.e.
\begin{equation}\label{eq:integrals_componentwise}
\int_0^\bullet \, H_{s-} \, \diff X_s \; := \; \Bigl( \, \int_0^\bullet \, H^{(1)}_{s-} \, \diff X_s^{(1)}, \, ... \,, \, \int_0^\bullet \, H^{(d)}_{s-} \, \diff X_s^{(d)} \, \Bigr).
\end{equation}
We can now state our first result on the weak convergence properties of stochastic integrals.

\begin{theorem}[Weak `continuity' of It{\^o} integrals] \ \label{thm:3.19}
Let $(\Omega^n, \mathcal{F}^n, \F^n, \Pro^n)$ be a family of filtered probability spaces supporting a sequence of semimartingales $(X^n)_{n\ge 1}$ with good decompositions \eqref{eq:Mn_An_condition}. Let $(H^n)_{n\ge 1}$ be any given sequence of adapted càdlàg processes for the same filtered probability spaces such that (i) there is joint weak convergence
	\begin{equation}\label{eq:joint_conv}
		(H^n,X^n) \; \Rightarrow \; (H,X)  \quad \text{ on } \quad (\D_{\R^d}[0,\infty)\, , \, \tilde{\rho}\, )\times (\D_{\R^d}[0,\infty)\, , \,\rho)
	\end{equation}
	with $\rho,\tilde{\rho} \in \{ \dM, \dJ \}$, for some càdlàg limits $H$ and $X$, and (ii) the pairs $(H^n, X^n)$ satisfy \eqref{eq:oscillcond}. Then, $X$ is a semimartingale in the filtration generated by the pair $(H,X)$ and
	\begin{equation}\label{eq:concerted_stoch_int_conv}
		\Bigl( X^n, \, \int_0^\bullet \, H^n_{s-} \, \diff X_s^n \Bigr) \;  \; \Rightarrow \; \Bigl(  X, \, \int_0^\bullet \, H_{s-} \, \diff X_s \Bigr) \quad \text{on} \quad (\D_{\R^{2d}}[0,\infty), \, \rho).
	\end{equation}
	If, in addition, $(H^n,X^n) \, \Rightarrow \, (H,X)$ on $(\D_{\R^{2d}}[0,\infty),\rho)$, then it furthermore holds that
	\begin{equation}\label{eq:conv_together}
		\Bigl(  H^n, \, X^n, \, \int_0^\bullet \, H^n_{s-} \, \diff X_s^n \Bigr) \;  \; \Rightarrow \; \Bigl( H, \, X, \, \int_0^\bullet \, H_{s-} \, \diff X_s \Bigr) \quad \text{on} \quad (\D_{\R^{3d}}[0,\infty), \, \rho).
	\end{equation}
\end{theorem}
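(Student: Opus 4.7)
The plan is to follow the philosophy sketched in the text preceding the statement: approximate the integrands by simple processes discretized at \emph{constant} times (continuity points of the limit $(H,X)$), use a continuous mapping theorem for simple integrals of this form (the already-stated Proposition \ref{thm:ContinuityOfSimpleIntegralsInM1Topology}), and then absorb the discretization error using the decomposition $X^n=M^n+A^n$ supplied by \eqref{eq:Mn_An_condition} together with the interplay control \eqref{eq:oscillcond}. The preservation-of-semimartingale claim is immediate from Proposition \ref{prop:preserve_semimartin}, since weak convergence of $(H^n,X^n)$ on the product space forces convergence of finite-dimensional distributions on the (dense) co-countable set of continuity points of $(H,X)$, and tightness of $|X^n|^*_t$ follows from Prokhorov's theorem applied to a convergent sequence.

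\textbf{Discretization at constant times and the continuous mapping step.} Fix $T>0$ and $\delta>0$, and choose a random (but $\F$-measurable) partition $0=t_0<t_1<\dots<t_N=T$ with mesh at most $\delta$ whose nodes are almost surely continuity points of the limiting pair $(H,X)$; such partitions exist since the discontinuity set of any càdlàg path is at most countable. Define the piecewise-constant approximants
\[
H^{n,\delta}_t \; := \; H^n_0\,\ind_{\{0\}}(t) + \sum_{i=0}^{N-1} H^n_{t_i}\,\ind_{(t_i,t_{i+1}]}(t),
\]
so that $\int_0^t H^{n,\delta}_{s-}\diff X^n_s = \sum_{i} H^n_{t_i}\bigl(X^n_{t_{i+1}\wedge t}-X^n_{t_i\wedge t}\bigr)$. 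Since $\rho$-convergence implies convergence of the finite-dimensional distributions at continuity points, the joint law of $\bigl(H^n_{t_0},\dots,H^n_{t_{N-1}},X^n\bigr)$ converges to $\bigl(H_{t_0},\dots,H_{t_{N-1}},X\bigr)$ in the appropriate product space. An application of the Skorokhod representation theorem together with Proposition \ref{thm:ContinuityOfSimpleIntegralsInM1Topology} then yields
\[
\Bigl( X^n,\;\int_0^\bullet H^{n,\delta}_{s-}\diff X^n_s\Bigr) \;\Rightarrow\; \Bigl(X,\;\int_0^\bullet H^{\delta}_{s-}\diff X_s\Bigr) \quad \text{on } (\D_{\R^{2d}}[0,\infty),\rho).
\]

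\textbf{Bounding the discretization error via \eqref{eq:Mn_An_condition} and \eqref{eq:oscillcond}.} Writing $E^{n,\delta} := \int_0^\bullet (H^n - H^{n,\delta})_{s-}\diff X^n_s = E^{n,\delta}_M + E^{n,\delta}_A$ according to $X^n=M^n+A^n$, I would show that for each $\eta>0$,
\[
\lim_{\delta\downarrow 0}\,\limsup_{n\to\infty}\,\Pro^n\bigl(\,|E^{n,\delta}|^*_T>\eta\,\bigr)=0.
\]
For $E^{n,\delta}_A$, the bound $|E^{n,\delta}_A|^*_T \le \text{TV}_{[0,T]}(A^n)\cdot\sup_{s\le T}|H^n_{s-}-H^{n,\delta}_{s-}|$ reduces the task, by the tight-total-variation half of \eqref{eq:Mn_An_condition}, to showing that the supremum is small: split the time interval into a neighbourhood of the (finitely many, after truncation) large jumps of $H^n$ and its complement; on the complement the uniform norm is controlled by the ordinary càdlàg modulus together with the fact that the nodes are continuity points, while on the neighbourhoods \eqref{eq:oscillcond} forces the mass of $A^n$ placed there to vanish asymptotically (extracting $A^n$-oscillations from those of $X^n$ by a further truncation of the martingale jumps using $\tau^n_c$). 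For $E^{n,\delta}_M$, I would localise at $\tau^n_c$ from Definition \ref{defn:GD}, apply Lenglart's inequality together with Doob, and rely on the uniform control of $\E^n[|\Delta M^n_{t\wedge\tau^n_c}|]$ in \eqref{eq:Mn_An_condition} to control the quadratic variation contribution of large martingale jumps, once again invoking \eqref{eq:oscillcond} to handle the interaction between significant increments of $H^n$ and those of $M^n$. The main obstacle is precisely this step: combining the two halves of \eqref{eq:Mn_An_condition} with \eqref{eq:oscillcond} in a way that is insensitive to whether the ambient topology $\rho$ is J1 or M1, since the standard J1 tools (e.g.\ the operators $J_\delta$ of Appendix \ref{app:UCV_and_PUT}) are not available here.

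\textbf{Conclusion and the amalgamated refinement.} A standard three-epsilon argument combining the previous two paragraphs, taking $\delta\downarrow 0$ after $n\to\infty$, delivers \eqref{eq:concerted_stoch_int_conv}. For the final statement, the hypothesis $(H^n,X^n)\Rightarrow(H,X)$ on $(\D_{\R^{2d}}[0,\infty),\rho)$ upgrades the finite-dimensional input of the continuous-mapping step (step 2) to a joint $\rho$-convergence of triples $(H^n,X^n,\int_0^\bullet H^{n,\delta}_{s-}\diff X^n_s)$, and since the error bound in step 3 is pathwise on the coordinate of the integrals alone, it carries over verbatim to give \eqref{eq:conv_together} by the same three-epsilon passage.
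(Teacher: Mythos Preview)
Your overall architecture matches the paper's, but the error-bounding step has a real gap. The paper does \emph{not} attempt to control $\int_0^\bullet (H^n-H^{n,\delta})_{s-}\,\diff X^n_s$ directly. Instead it inserts a second, \emph{path-dependent} discretization $H^{n\,|\,m,\varepsilon}:=I_{\rho^{m,\varepsilon}(H^n)}(H^n)$ (Definition \ref{def:3.21}) between $H^n$ and the constant-time discretization $H^{n\,|\,m}$, yielding a four-term split (T1)--(T4). The point of this intermediate object is that $|H^n-H^{n\,|\,m,\varepsilon}|^*_T\le\varepsilon$ \emph{deterministically}, so the term (T4) is handled by \eqref{eq:Mn_An_condition} alone via Lenglart's inequality (Lemma \ref{prop:Lenglart}), with no appeal to \eqref{eq:oscillcond} whatsoever. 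Then \eqref{eq:oscillcond} is used \emph{only} in (T3), to compare two \emph{simple} integrals $\int H^{n\,|\,m}\diff X^n$ and $\int H^{n\,|\,m,\varepsilon}\diff X^n$; this is the content of Proposition \ref{prop:3.22}, and the argument there is clean precisely because both integrands are piecewise constant and the difference reduces to a finite sum of $X^n$-increments immediately following large $H^n$-increments, which is exactly what $\hat w^T_\delta(H^n,X^n)$ governs.

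Your attempt to collapse (T3) and (T4) into a single step does not work as written. The integrand $H^n-H^{n,\delta}$ is \emph{not} uniformly small (jumps of $H^n$ within a mesh interval make it of order $|H^n|^*_T$), so you cannot feed it directly into Lenglart/\eqref{eq:Mn_An_condition}. Your proposed remedy---localising near large jumps of $H^n$ and claiming that \eqref{eq:oscillcond} forces the $A^n$-mass there to vanish---misreads the condition: \eqref{eq:oscillcond} controls \emph{increments of $X^n$} following increments of $H^n$, not the total variation of $A^n$ (or of $X^n$) on small intervals, and there is no mechanism to pass from $X^n$ to $A^n$ without first invoking \eqref{eq:Mn_An_condition} in a way that already requires the integrand to be uniformly small. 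Likewise, for $E^{n,\delta}_M$ Lenglart only helps once the integrand is uniformly bounded by $\varepsilon$; on the ``bad'' intervals it is not. The missing idea is precisely the intermediate $\rho^{m,\varepsilon}(H^n)$-discretization, which decouples the two conditions so that each does the one job it is designed for.
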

\begin{remark} Naturally, all statements also hold with $[0,\infty)$ replaced by $[0,T]$, for $T>0$.
\end{remark}

While \eqref{eq:oscillcond} seems difficult to work with in general, our next result singles out two simple sufficient criteria. These provide intuition, and may be all that is needed in many applications. The first criterion makes a precise connection
to the conditions of \cite{jakubowskimeminpages,kurtzprotter} (and \cite{shiryaev,graham}) in the $J_1$ setting. Concerning the second criterion, it was noted in \cite[Rmk.~4]{jakubowski} that if the integrands or integrators form a tight sequence for the $J_1$ topology with all limit points supported on the space of continuous paths, then \eqref{eq:oscillcond} is satisfied. As a natural generalisation of this, we make the useful observation that one only needs to rule out common jumps in the limit.

\begin{prop}[Sufficient conditions for AVCI]  \label{prop:3.3}
	In the setting of Theorem \ref{thm:3.19}, the condition \eqref{eq:oscillcond} is satisfied if one of the following two criteria holds:
	\begin{enumerate}[(i)]
		\item The pairs $(H^n, X^n)$ converge together to $(H, X)$ weakly in the $J_1$ topology, meaning that \eqref{eq:joint_conv} holds on $(\D_{\mathbb{R}^{2d}}[0,\infty), \, \dJ)$. \label{it:prop_suff_cond_AVCO}
		\item The limiting processes $H$ and $X$ almost surely have no common discontinuities, that is,
		\[
		\operatorname{Disc}(H) \; \cap \; \operatorname{Disc}(X) \; = \; \emptyset \quad \text{almost surely}. \vspace{-2pt}
		\]
	\end{enumerate}
\end{prop}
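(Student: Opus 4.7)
For Criterion \ref{it:prop_suff_cond_AVCO}, the plan is to dominate $\hat w_\delta^T$ by the standard J1 modulus of continuity on the amalgamated path $(x,y)\in\D_{\R^{2d}}[0,T]$. Specifically, a short partition argument should yield the deterministic inequality
\[
\hat w_\delta^T(x,y) \; \le \; w'_\delta((x,y)), \quad \text{for all } x,y\in\D_{\R^d}[0,T],\,\delta>0,
\]
where $w'_\delta$ is the J1 modulus on $\D_{\R^{2d}}[0,T]$. The idea is that, for any partition $\{t_k\}$ of $[0,T]$ with $t_k-t_{k-1}>\delta$ for all $k$, every triple $s<t<u$ with $u-s\le\delta$ lies in at most two consecutive subintervals $[t_{k-1},t_k)$ and $[t_k,t_{k+1})$. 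In the single-subinterval case both increments are controlled by the oscillation of $(x,y)$ there, and in the two-subinterval case $t$ belongs to exactly one of them, so at least one of the factors in the $\wedge$ is controlled by the oscillation of $(x,y)$ on that subinterval. Infimising over partitions gives the bound, and the classical characterisation of J1 tightness (see \cite{billingsley}) combined with the assumed joint J1 convergence then delivers \eqref{eq:oscillcond}.

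For Criterion 2 the joint J1 modulus is no longer available, since we only have convergence in the product topology. The plan is to first reduce to the weakest case: because J1 convergence implies M1 convergence, we may assume without loss of generality that $\rho=\tilde\rho=\dM$. Then I would invoke the Skorokhod representation theorem on the Polish product space $(\D_{\R^d}[0,\infty),\dM)\times(\D_{\R^d}[0,\infty),\dM)$ to obtain almost surely convergent copies $(\tilde H^n,\tilde X^n)$ with the same joint distributions as $(H^n,X^n)$ and limit $(\tilde H,\tilde X)$ distributed as $(H,X)$; it then suffices to verify \eqref{eq:oscillcond} for these copies. On the full-measure event where $\tilde H$ and $\tilde X$ share no discontinuities and both coordinates converge in M1, fix $\gamma>0$ and consider the finite and (by assumption) pairwise distinct set of $\gamma$-jump times of $\tilde H$ and $\tilde X$. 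Inserting these times into a partition of $[0,T]$, refined at continuity points of both $\tilde H$ and $\tilde X$ so as to have spacing no larger than some $\delta_1$, one can split the supremum defining $\hat w_\delta^T(\tilde H^n,\tilde X^n)$ (for $\delta<\delta_1$) according to whether the window $[s,u]$ lies away from all $\gamma$-jumps of the limits, near a $\gamma$-jump of $\tilde H$ only, or near a $\gamma$-jump of $\tilde X$ only. In each case one of the two increments in the $\wedge$ must be small: in the first case because neither limit has a big jump nearby, and in the last two cases because the limit of the component whose jumps are absent from the window is continuous there and so the approximations have small oscillations, as transferred from the limit via M1 convergence.

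The main obstacle will be the M1 part of Criterion 2, where approximating paths can develop wild features around jumps of the limit (jumps splitting into several smaller jumps, or steep continuous ascents converging to a jump). The crucial input needed to close the casework is the classical fact that M1 convergence implies uniform convergence on every compact interval on which the limit is continuous (cf.~\cite[Thm.~12.4.1]{whitt}). Combined with the elementary observation that the standard oscillation of a càdlàg path on a small window around a continuity point of the path tends to zero with the window size, this is what makes the `one component is continuous, hence quiet, near a jump of the other' mechanism rigorous. Feeding the resulting deterministic statement back through the Skorokhod representation and letting $n\to\infty$ followed by $\delta\downarrow 0$ then yields \eqref{eq:oscillcond} on the event of full probability, completing Criterion 2.
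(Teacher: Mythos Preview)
Your treatment of Criterion~\ref{it:prop_suff_cond_AVCO} is correct and is exactly what the paper does: the deterministic bound $\hat w^T_\delta(x,y)\le w'_\delta((x,y))$ is immediate from the partition argument you sketch, and the J1 tightness criterion then gives \eqref{eq:oscillcond}.

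For Criterion 2, your high-level strategy (Skorokhod representation, then a pathwise argument on the almost-sure event where $\operatorname{Disc}(\tilde H)\cap\operatorname{Disc}(\tilde X)=\emptyset$) is right and matches the paper, but two steps of your casework do not go through as written. First, building the partition from the finite set of $\gamma$-jump times does not give oscillation control: a c\`adl\`ag path can have oscillation $\ge\gamma$ on an arbitrarily short interval without containing any $\gamma$-jump, so in your ``window away from all $\gamma$-jumps'' case there is no reason either factor in the $\wedge$ is small, and in your ``near a $\gamma$-jump of $\tilde X$ only'' case $\tilde H$ need not be \emph{continuous} on the window (it merely has no $\gamma$-jump there), so your cited tool ``M1 convergence $\Rightarrow$ uniform convergence on intervals of continuity of the limit'' does not apply. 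The paper fixes both issues at once by choosing, for $\tilde H$ and $\tilde X$ \emph{separately}, the standard c\`adl\`ag oscillation partitions $\{t^H_i\}$ and $\{t^X_j\}$ on which the respective oscillations are $<\gamma/2$; the hypothesis $\operatorname{Disc}(\tilde H)\cap\operatorname{Disc}(\tilde X)=\emptyset$ allows these two finite sets of interior partition points to be taken disjoint. The transference step is then carried out directly with parametric representations rather than via a black-box continuity fact: if $|\tilde X^n_t-\tilde X^n_v|>\gamma$ with $|t-v|\le\lambda$, one locates points $p_1,p_2$ within distance $\lambda$ at which $|\tilde X_{p_1}-\tilde X_{p_2}|>\gamma/2$, forcing some $t^X_i$ into the window $(s-\lambda,v+\lambda)$; the symmetric argument for $\tilde H^n$ forces some $t^H_j$ into the same window. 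Taking $\lambda$ less than a quarter of the minimum gap between distinct points of $\{t^H_i\}\cup\{t^X_j\}$ yields the contradiction. Once you replace the $\gamma$-jump partition by oscillation partitions and swap the ``uniform on continuity intervals'' lemma for this parametric-representation transference (or, equivalently, the fact that M1 convergence controls the oscillation of approximants near a \emph{single} continuity point of the limit), your argument becomes the paper's.
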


\begin{remark}[Weak and strong $M_1$ topology]\label{rem:weak$M_1$} Whitt's monograph \cite{whitt} defines a `strong' and `weak' $M_1$ topology for $d\geq 2$. The standard (or `strong') metric $\dM$ from Definition \ref{def:$M_1$_metric} induces the `strong' topology. The `weak' version is simply the product topology on $\D_{\mathbb{R}^{d}}[0,\infty) \cong (\D_{\mathbb{R}}[0,\infty))^{ d} $ when each $\D_{\mathbb{R}}[0,\infty)$ is endowed with the $M_1$ topology. All our results also hold if $\dM$ is taken to be a metric inducing the `weak' $M_1$ topology. The proofs remain identical, except for the arguments to control \eqref{eq:T2} in the proof of Theorem \ref{thm:3.19} and its later variant, Theorem \ref{prop:4.36} (see Section \ref{sect:proof_main_weak_cont}). With the $d$-fold product topology, one instead relies on multiple applications of Proposition \ref{thm:ContinuityOfSimpleIntegralsInM_1Topology} in dimension one and otherwise proceeds analogously.
\end{remark}

In applications, tightness is typically much easier to verify individually for the $H^n$ and $X^n$. This is sufficient for obtaining joint weak convergence of the pairs $(H^n,X^n)$ on the product space $\D_{\mathbb{R}^d}[0,\infty) \times \D_{\mathbb{R}^d}[0,\infty)$. By contrast, convergence together (i.e., for the `strong' topology) requires tightness in $\D_{\mathbb{R}^{2d}}[0,\infty)$ which is a substantially more restrictive property.

\begin{remark}[Matrices and dot product integrals]\label{rem:matrix_dot}
	If $\boldsymbol{H}$ is a $(k \times m)$-matrix and $\boldsymbol{X}$ is an $(m \times d)$-matrix, we can define a matrix stochastic integral $\int_0^\bullet \, \!\boldsymbol{H}_{s-} \diff \boldsymbol{X}_s$ as the $(k \times d)$-matrix whose components are the `dot product' stochastic integrals \[
	\Bigl( \, \int_0^\bullet \, \boldsymbol{H}_{s-} \diff \boldsymbol{X}_s\, \Bigr)_{j,\ell} \; := \; \int_0^\bullet \,\boldsymbol{H}^{(\text{row}\,j)}_{s-} \cdot \diff \boldsymbol{X}^{(\text{col}\,\ell)}_s \; :=  \; \sum_{i=1}^m 	\int_0^\bullet \, H^{(j,i)}_{s-}\, \diff X^{(i,\ell)}_s.
	\]
	By Proposition \ref{prop:A3}, linearly combining components is a continuous operation for convergence together on $(\D_{\R^{kd}}[0,\infty), \rho)$. Thus, the continuous mapping theorem and Theorem \ref{thm:3.19} give weak convergence of matrix stochastic integrals on $(\D_{\R^{k\times d}}[0,\infty), \rho)$, by identifying $\R^{k \times d}$ with $\R^{kd}$, if we assume \eqref{eq:oscillcond} holds for each sequence of pairs $(\boldsymbol{H}^{n,(\text{row}\,j)},\boldsymbol{X}^{n,(\text{col}\,\ell)})_{n\ge 1}$.
\end{remark}

\subsection{Convergence in probability and relaxed conditions on integrands}\label{subsect:probability_and_intregrands}
In view of Theorem \ref{thm:3.19}, it is not surprising that, if, in place of weak convergence, we require the integrands and integrators to converge in probability on Skorokhod space, then this mode of convergence carries over to the stochastic integrals. The next result makes this precise.

\begin{theorem}[Functional convergence in probability] \label{thm:2.12}   
	Let $(X^n)_{n\ge 1}$ be a sequence of $d$-dimensional semimartingales with good decompositions \eqref{eq:Mn_An_condition} on a common filtered probability space $(\Omega, \mathcal{F}, \F, \Pro)$. Consider a sequence of adapted càdlàg processes $(H^n)_{n\ge 1}$ on the same filtered probability space such that
	\begin{align} \Pro \left( \, \tilde \rho(H^n, H) \, + \, \rho(X^n, X) \; > \; \gamma\, \right) \; \conv{n\to \infty} \; \quad 0, \label{eq:probconv_HnXn}
	\end{align}
	for every $\gamma>0$, for some adapted càdlàg processes $H$ and $X$, where $\tilde{\rho},\rho \in \{\dM, \dJ\}$. Furthermore, suppose the pairs $(H^n, X^n)$ satisfy \eqref{eq:oscillcond}. Then, $X$ is a semimartingale in the natural filtration generated by the pair $(H, X)$ and, on $\D_{\R^{2d}}[0,\infty)$, we have
	\begin{align} \Pro \Bigl( \, \rho \Bigl( \, \bigl(\, X^n,\, \int_0^\bullet \, H^n_{s-} \; \diff X^n_s\, \bigr) \; ,  \; \bigl( \, X, \, \int_0^\bullet \, H_{s-} \; \diff X_s\, \bigr) \, \Bigr) \; > \; \gamma\, \Bigr) \; \conv{n \to \infty} \; \quad 0 \label{eq:probconv_integral} \end{align}
	for all\;\,$\gamma>0$. If, additionally, $\rho((H^n, X^n), (H, X))$ tends to zero in probability, then there is also convergence together with the $H^n$ in \eqref{eq:probconv_integral} on the strong space $\D_{\R^{3d}}[0,\infty)$.
\end{theorem}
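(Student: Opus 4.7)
The strategy is to reduce to Theorem \ref{thm:3.19} via the standard subsequence characterization of convergence in probability on a Polish space: $Z^n \to Z$ in probability if and only if every subsequence admits a further sub-subsequence along which the convergence holds almost surely. Given any subsequence $(n_k)$, the hypothesis \eqref{eq:probconv_HnXn} allows a diagonal extraction of a sub-subsequence $(n_{k_j})$ along which $(H^{n_{k_j}}, X^{n_{k_j}}) \to (H, X)$ almost surely on the product Skorokhod space $(\D_{\R^{kd}}[0,\infty),\tilde{\rho}) \times (\D_{\R^d}[0,\infty),\rho)$, exploiting separability of both factors.

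Almost sure convergence implies weak convergence, while the hypotheses \eqref{eq:Mn_An_condition} and \eqref{eq:oscillcond} are manifestly preserved under subsequences. Theorem \ref{thm:3.19} therefore applies along $(n_{k_j})$, identifying $X$ as a semimartingale in the natural filtration generated by $(H,X)$ and producing the weak convergence
\[
\left(X^{n_{k_j}}, \int_0^\bullet H^{n_{k_j}}_{s-}\,\diff X^{n_{k_j}}_s\right) \; \Rightarrow \; \left(X, \int_0^\bullet H_{s-}\,\diff X_s\right)
\]
on the amalgamated Skorokhod space.

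The principal obstacle is to upgrade this weak convergence to convergence in probability. Since the Itô integral is not a pathwise continuous functional on Skorokhod space, continuous mapping is not directly available; the plan is to revisit the approximation scheme that underpins the proof of Theorem \ref{thm:3.19}. Namely, decompose $\int_0^\bullet H^{n_{k_j}}_{s-}\,\diff X^{n_{k_j}}_s$ as a simple integral $\int_0^\bullet H^{n_{k_j},\varepsilon}_{s-}\,\diff X^{n_{k_j}}_s$ obtained by discretizing $H^{n_{k_j}}$ on a finite grid of constant times chosen at continuity points of the limit $(H,X)$, plus a remainder whose probabilistic size is controlled uniformly in $n$ by the good decompositions \eqref{eq:Mn_An_condition} via Lenglart-type estimates (the same uniform tightness bound as in the weak convergence proof; here \eqref{eq:oscillcond} is what ensures the asymptotic equivalence between discretizing at these constant grid times and discretizing along the path itself). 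Along the almost surely convergent sub-subsequence, the simple integrals are continuous functionals of finitely many coordinates of $(H^{n_{k_j}}, X^{n_{k_j}})$, so Proposition \ref{thm:ContinuityOfSimpleIntegralsInM1Topology} delivers their almost sure convergence to the corresponding simple approximation of the limit integral. A triangle inequality, passing first to $j \to \infty$ and then $\varepsilon \downarrow 0$, yields convergence in probability of the full integrals along the sub-subsequence $(n_{k_j})$, which closes the subsequence argument and establishes \eqref{eq:probconv_integral}.

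The final assertion---joint convergence with $H^n$ on $\D_{\R^{kd+d+kd}}[0,\infty)$---is handled identically, now invoking the ``converge together'' conclusion \eqref{eq:conv_together} of Theorem \ref{thm:3.19} under the stronger hypothesis that $(H^n,X^n) \to (H,X)$ in probability on the amalgamated space, and extracting almost surely convergent sub-subsequences there instead of on the product space.
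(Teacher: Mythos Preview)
Your proposal is correct and aligns with the paper's approach. The paper itself does not give a detailed proof of Theorem \ref{thm:2.12}, remarking only that ``it amounts to the slightest of modifications to the proofs'' of Theorem \ref{thm:3.19}; your subsequence extraction followed by rerunning the (T1)--(T4) approximation scheme---with the Skorokhod representation step in (T2) replaced by the genuine almost sure convergence now available---is precisely that modification.
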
 

Convergence in probability is also considered in \cite{jakubowski,jakubowskimeminpages,kurtzprotter}. As in those works, the proof amounts only to the slightest of modifications to the proof of weak convergence in Theorem \ref{thm:3.19}.

The results of \cite{jakubowski} for the $S$ topology on $\D_{\R}[0,1]$ show that we can aim for weaker assumptions on the integrands. The next result adapts this to our setting. To this end, we shall need the function $N^T_\delta$ that returns the number of $\delta$-increments of a path up to time $T$, namely
\begin{align} N^T_\delta(\alpha) \; := \;   \sup{} \!\bigl\{ n \,  : \,  0=t_1\le t_2 \le ...\le t_{2n}=T \; , \;  |\alpha(t_{2i})-\alpha(t_{2i-1})|\ge \delta \bigr\} \label{eq:maxnumosc} \end{align}
for $\alpha:[0,T] \to \R^d$.
Moreover, we will write $\operatorname{Disc}_\Pro(H,X):=\{s>0: \Pro(\Delta(H,X)\neq 0)>0\}$, and we recall that this defines a countable set of times for c\`adl\`ag processes $H$ and $X$.

\begin{prop}[Relaxed assumptions on the integrands]\label{remark:weaker_integrnd_conv}Consider the same setting as Theorem \ref{thm:3.19}, but let the convergence of $(H^n,X^n)$ in \eqref{eq:joint_conv} be replaced by the following:
	\begin{enumerate}
		\renewcommand{\theenumi}{F\arabic{enumi}}
		\item  \label{it:alt_int_crit1} The finite-dimensional distributions of the $H^n$ converge along a dense subset $D \subseteq [0,\infty) \setminus \operatorname{Disc}_\Pro(H,X)$, together with the full distributions of $X^n$, i.e., 
		\[
		(X^n, \, H^n_{t_1}, \, H^n_{t_2},\, ...,\, H^n_{t_k}) \; \;  \Rightarrow \; \;  (X, \, H_{t_1}, \, H_{t_2},\, ...,\, H_{t_k})
		\]
		on $(\D_{\R^d}[0, \infty), \rho) \times (\R^{d\times k}, |\cdot|\, )$,	for all $k\ge 1$ and $t_1,...,t_k \in D$;
		\item \label{it:alt_int_crit2} The integrands are uniformly stochastically bounded on compacts, i.e., the family $\{|H^n|^*_T\}_{n \ge 1}$ is tight, for each $T>0$;
		\item \label{it:alt_int_crit3} The maximal number of `large increments' of the $H^n$ is tight, i.e., $\{N^T_\delta(H^n)\}_{n\ge 1}$ is tight for any $\delta>0$ and $T>0$, where $N^T_\delta$ is given by \eqref{eq:maxnumosc}.
	\end{enumerate}
	Given \eqref{it:alt_int_crit1}--\eqref{it:alt_int_crit3}, it then still holds that $X$ is a semimartingale in the filtration generated by the limiting pair $(H,X)$ and we still have the weak convergence
	\begin{equation}\label{eq:relaxed_integrands_conv}
		\Bigl( \, X^n, \, \int_0^\bullet \, H^n_{s-} \, \diff X_s^n\, \Bigr) \;  \; \Rightarrow \;\Bigl( \, X, \, \int_0^\bullet \, H_{s-} \, \diff X_s\, \Bigr) \quad \text{ on } \quad (\D_{\R^{2d}}[0,\infty), \, \rho).
	\end{equation}
	Furthermore, if \eqref{it:alt_int_crit1} holds with convergence in probability in place of weak convergence, for all $t_1,...,t_k \in D$, then we obtain the convergence in probability as in \eqref{eq:probconv_integral}.
\end{prop}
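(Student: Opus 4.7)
The plan is to approximate each integrand $H^n$ by a simple process $H^{n,\varepsilon}$ sampled at a deterministic grid drawn from $D$, apply Theorem~\ref{thm:3.19} to the discretised pairs, and then show that the approximation error vanishes in probability. Fix $T>0$ and $\varepsilon>0$, and choose $0=t_0^\varepsilon<\cdots<t_{K_\varepsilon}^\varepsilon=T$ with all $t_k^\varepsilon\in D$ and mesh at most $\varepsilon$. Set
$$
H^{n,\varepsilon}_s \; := \; \sum_{k=0}^{K_\varepsilon-1} H^n_{t_k^\varepsilon}\,\ind_{[t_k^\varepsilon, t_{k+1}^\varepsilon)}(s),
$$
and define $H^\varepsilon$ analogously in terms of $H$. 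The map that assembles a simple process from its prescribed values at deterministic grid points is continuous into $(\D_{\R^d}[0,\infty),\dJ)$, so by (F1) and the continuous mapping theorem one obtains $(H^{n,\varepsilon},X^n)\Rightarrow (H^\varepsilon,X)$ on $(\D_{\R^d}[0,\infty),\dJ)\times(\D_{\R^d}[0,\infty),\rho)$. Since the jump times of $H^\varepsilon$ lie in $D\subseteq [0,\infty)\setminus \operatorname{Disc}_\Pro(H,X)$, almost surely $\operatorname{Disc}(H^\varepsilon)\cap \operatorname{Disc}(X)=\emptyset$, so criterion~(2) of Proposition~\ref{prop:3.3} yields \eqref{eq:oscillcond} for the pairs $(H^{n,\varepsilon},X^n)$. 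Theorem~\ref{thm:3.19} then gives, for each fixed $\varepsilon>0$,
$$
\Bigl(X^n,\int_0^\bullet H^{n,\varepsilon}_{s-}\,\diff X^n_s\Bigr)\; \Rightarrow\; \Bigl(X,\int_0^\bullet H^\varepsilon_{s-}\,\diff X_s\Bigr)\qquad\text{on } (\D_{\R^{2d}}[0,\infty),\rho).
$$

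The crux is to show that the discretisation error
$$
E^{n,\varepsilon}_T \; := \; \operatorname{sup}_{t\leq T}\,\biggl|\int_0^t (H^n_{s-} - H^{n,\varepsilon}_{s-})\,\diff X^n_s\biggr|
$$
vanishes in probability as $\varepsilon\downarrow 0$, uniformly in $n$. Splitting via the good decomposition $X^n=M^n+A^n$, the finite-variation contribution is dominated by $\eta^{n,\varepsilon}\cdot \operatorname{TV}_{[0,T]}(A^n)$, where
$$
\eta^{n,\varepsilon} \; := \; \operatorname{max}_{0\leq k<K_\varepsilon}\;\operatorname{sup}_{s\in[t_k^\varepsilon,t_{k+1}^\varepsilon)}\bigl|H^n_s-H^n_{t_k^\varepsilon}\bigr|,
$$
while the martingale contribution is handled via Lenglart's inequality (Lemma~\ref{prop:Lenglart}) and localisation at $\tau^n_c$ from Definition~\ref{defn:GD}, reducing the task to controlling $\eta^{n,\varepsilon}$ against the tight quadratic variation $[M^n]_{T\wedge \tau^n_c}$. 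The decisive observation is that (F2) and (F3) together force $\eta^{n,\varepsilon}$ to be small with high probability as $\varepsilon\downarrow 0$: any subinterval $[t_k^\varepsilon,t_{k+1}^\varepsilon)$ on which $H^n$ oscillates by more than $\delta$ contributes at least one $\delta$-increment to $N^T_\delta(H^n)$, so by (F3) the total number of such `bad' subintervals is bounded in probability by a random $K_\delta$ that is tight in $n$; the contribution of the bad subintervals is controlled via $|H^n|^*_T$ from (F2), while on all remaining subintervals $\eta^{n,\varepsilon}\leq \delta$ by construction.

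With the error bound in hand, a diagonal argument combined with the ucp continuity $\int_0^\bullet H^\varepsilon_{s-}\,\diff X_s \to \int_0^\bullet H_{s-}\,\diff X_s$ as $\varepsilon\downarrow 0$ (standard for a fixed semimartingale $X$, since $H^\varepsilon\to H_-$ pointwise outside $\operatorname{Disc}(H)$) delivers \eqref{eq:relaxed_integrands_conv}. Preservation of the semimartingale property is inherited from the finite-dimensional argument behind Proposition~\ref{prop:preserve_semimartin}. The convergence-in-probability statement is obtained by substituting Theorem~\ref{thm:2.12} for Theorem~\ref{thm:3.19} at the discretised step; the error analysis in the preceding paragraph is already formulated probabilistically and carries over verbatim. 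The principal obstacle lies precisely in that error analysis, namely in combining (F3) with the tight total variation in \eqref{eq:Mn_An_condition} so as to keep uniform control when the `bad' subintervals happen to straddle the large jumps of the integrator; this is where the interaction between the three relaxed hypotheses and the good decomposition is most delicate.
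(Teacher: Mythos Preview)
Your outline has a genuine gap in the error analysis, and it is exactly the step you flag as ``the principal obstacle'' without resolving it.

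The claim that (F2) and (F3) force $\eta^{n,\varepsilon}=\max_k\sup_{s\in[t_k^\varepsilon,t_{k+1}^\varepsilon)}|H^n_s-H^n_{t_k^\varepsilon}|$ to be small with high probability is false: a single jump of $H^n$ makes $\eta^{n,\varepsilon}$ at least the jump size, no matter how fine the grid, and (F3) permits boundedly many such jumps. Your fallback---splitting the integral over ``good'' and ``bad'' subintervals---does not close the gap either, because on the (few) bad subintervals the integrand difference is only bounded by $2|H^n|^*_T$, while the \emph{integrator} mass $\operatorname{TV}_{[0,T]}(A^n)$ or $[M^n]_T$ concentrated on those subintervals is not controlled by anything you have invoked. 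Concretely: if $H^n$ jumps at a random time $\tau_n$ and $X^n$ jumps shortly after, the bad subinterval straddles both, and the error $E^{n,\varepsilon}_T$ does not vanish. What rules this out is precisely \eqref{eq:oscillcond}, which you have available (it is part of ``the same setting as Theorem~\ref{thm:3.19}'') but never use in the error step---you only use it, trivially, for the discretised pairs.

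The paper's route is different and avoids the difficulty. It observes that the proof of Theorem~\ref{thm:3.19} already uses only (F1)--(F3) of the integrands, so it goes through verbatim. The mechanism there is to introduce, in addition to the deterministic discretisation $I_{\rho^m}(H^n)$, the \emph{path-dependent} discretisation $I_{\rho^{m,\varepsilon}(H^n)}(H^n)$ of Definition~\ref{def:3.21}, which satisfies $|I_{\rho^{m,\varepsilon}(H^n)}(H^n)-H^n|^*_T\le\varepsilon$ \emph{by construction}. This makes the analogue of your $E^{n,\varepsilon}_T$ (term (T4)) immediately controllable via Lenglart and \eqref{eq:Mn_An_condition}. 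The price is a new term (T3) comparing the two discretisations, and \emph{that} is where \eqref{eq:oscillcond} enters, via Proposition~\ref{prop:3.22}, whose proof uses exactly (F2) and (F3) to bound the number of path-dependent stopping times. In short: the paper absorbs the ``bad subinterval'' problem into the gap between two discretisations and lets \eqref{eq:oscillcond} handle it; your scheme tries to jump directly from $H^n$ to a deterministic grid and thereby loses the leverage that \eqref{eq:oscillcond} provides.
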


 We stress that \eqref{it:alt_int_crit1}--\eqref{it:alt_int_crit3} are satisfied if  $(X^n, H^n)_{n\geq1}$ is tight on the $M_1$ product space. The proof of Proposition \ref{remark:weaker_integrnd_conv} follows by analogy with that of Theorem \ref{thm:3.19}, as the only properties of the integrands we make use of, beyond \eqref{eq:oscillcond}, are \eqref{it:alt_int_crit1}--\eqref{it:alt_int_crit3}. One quickly
 sees that neither \eqref{it:alt_int_crit1} nor \eqref{it:alt_int_crit2} can be dropped in the statement. More surprisingly, the next example shows that this is also the case for \eqref{it:alt_int_crit3}, even for martingale integrators with a continuous limit.

\begin{example} 
	Consider a rescaled classical random walk $X^n:= n^{-1/2} \sum_{k=1}^{\lfloor n \bullet \rfloor} \xi_k$ for i.i.d.~random variables $\xi_k$ such that $\Pro(\xi_1=1)=\Pro(\xi_1=-1)=1/2$, and consider integrands
	$$H^n_t \; := \; \sum_{k=1}^\infty  \, -\operatorname{sgn}(X^n_{\tau^n_{2k-1}}) \ind_{[\tau^n_{2k-1},\, \tau^n_{2k})}(t),$$ where the stopping times are defined inductively by $\tau^n_0\equiv 0$ and 
		$\tau^n_{2k+1}:= (\tau^n_{2k}+n^{-1})\wedge 2n^{-1/4}$ as well as
		$\tau^n_{2k}:= \operatorname{inf} \{ t> \tau^n_{2k-1} : X^n_t = 0\} \wedge 2n^{-1/4}$, for all $k\ge 1$. Therefore, the $H^n$ take the value $1$ on periods of upcrossings of the $X^n$ from $-n^{-1/2}$ to $0$ and the value $-1$ on periods of downcrossing from $n^{-1/2}$ to $0$ (up to time $t=2n^{-1/4} $ and they are constantly zero thereafter). Clearly, the $X^n$ are martingales converging weakly to a Brownian motion with respect to the topology of uniform convergence on compacts. Since the limit is a continuous process, the pairs $(H^n,X^n)$ have \eqref{eq:oscillcond} due to Proposition \ref{prop:3.3}. Moreover, the $H^n$ satisfy \eqref{it:alt_int_crit1}-- \eqref{it:alt_int_crit2}, but \emph{not} \eqref{it:alt_int_crit3}, and the finite-dimensional distributions of the integrals $\int H^n_{s-} \diff X^n_s$ escape to infinity. To see this, let $S_j:=\sum_{i=0}^j \xi_i$ and $Z_m:= \sum_{j=1}^m \ind_{\{S_j =0\}}$,  the number of zeros of $(S_j)_{j=1}^m$, which is equal to the number of zeros of $(X^n_{k/n})_{k=1}^m$. Thus, with $r^n:= \operatorname{max}\{k \ge 1 : \tau^n_{2k} < 2n^{-1/4}\}$, we have $r^n=Z_{\lfloor 2n^{3/4} \rfloor-1}\ge Z_{2 \lfloor n^{3/4} \rfloor -2}$. By \cite[Thm.~4.3.5]{durrett} and the tightness of $X^n$, for any $\varepsilon>0$ we can find $L_\varepsilon, K_\varepsilon>0$ and $n\ge N$ so that 
	\begin{align}
		\Pro \bigl( \, r^n \, > \, 2L_\varepsilon \lfloor n^{3/4} \rfloor -2 \; , \; |X^n|^*_1 \, \le \, K_\varepsilon \, \bigr) \; \ge \; 1- \varepsilon. \label{eq:example_F3_not_omittable}
	\end{align}
	Hence, \eqref{it:alt_int_crit3} is not satisfied for all $\delta\le 1$. For $t>0$ and $n$ large, on the event from \eqref{eq:example_F3_not_omittable},
	\begin{align*} \int_0^t \, H^n_{s-} \diff X^n_s \; &= \; \sum_{k=1}^{r^n} \, - \operatorname{sgn}(X^n_{\tau^n_{2k-1}})\, (X^n_{\tau^n_{2k}}-X^n_{\tau^n_{2k-1}}) \, + \, (X^n_{2n^{-1/4}} - X^n_{\tau^n_{2r^n+1}}) \\
		&= \; n^{-1/2}\, r^n \, + \, (X^n_{2n^{-1/4}} - X^n_{\tau^n_{2r^n+1}}) \; \ge \;\frac{2L_\varepsilon \lfloor n^{3/4} \rfloor -2}{\sqrt{n}} \; - 2 K_\varepsilon \; \conv{n\to \infty} \; \infty.
	\end{align*}
\end{example}

Despite this example, local martingale integrators that are continuous in the limit can allow for more general integrands, and we can even have rather simple looking criteria. However, the criteria must then link the behaviour of the integrands and integrators, as achieved by the weak convergence condition \eqref{eq:interplay_cond} below, and we need some additional regularity of the integrators: we will say that a sequence $(Y^n)_{n\ge 1}$ of local martingales has \emph{vanishing jumps in expectation at hitting times of its quadratic variation} if there is a $\delta>0$ such that, for every $t>0$,
\begin{align} \limsup_{n\to \infty} \;  \E^n[ \, |\Delta Y^n_{t\wedge \tau^n_c}|\, ] \; = \; 0 \label{eq:vanishing_jumps_condition} \end{align}
for all $0<c<\delta$, where $\tau^n_c:= \operatorname{inf}\{ s>0: [Y^n]_s \ge  c\}$.

\begin{cor}[Local martingale integrators with continuous limit] \label{cor:2.14}
	Consider a family of filtered probability spaces $(\Omega^n, \mathcal{F}^n, \F^n, \Pro^n)$ supporting a sequence of càdlàg local martingales $(X^n)_{n\ge 1}$. Suppose $X^n \Rightarrow X$ on $(\D_{\R^d}[0,\infty) ,  \dM )$ for a limit $X$ with values in $\C_{\R^d}[0,\infty)$, defined on a filtered probability space $(\Omega, \mathcal{F}, \F, \Pro)$. Let $(H^n)_{n\ge 1}$ be a sequence of predictable processes on $(\Omega^n, \mathcal{F}^n, \F^n, \Pro^n)$ such that (i) the Itô integrals $Y^n:=\int_0^\bullet H^n_s \diff X^n_s$ are local martingales, (ii) the $Y^n$ satisfy \eqref{eq:vanishing_jumps_condition}, and (iii) there is weak convergence 
	\begin{align} \label{eq:interplay_cond}
		\int_0^\bullet \, H^n_s \, \diff [X^n]_s  \Rightarrow \int_0^\bullet \, H_s \, \diff [X]_s \quad \text{and} \quad  \int_0^\bullet \, (H^n_s)^2 \, \diff [X^n]_s\Rightarrow  \int_0^\bullet \, H_s^2 \, \diff [X]_s \end{align}
	on $(\D_{\R^{d}}[0,\infty) , \dM )$. Then, it holds that
	\begin{equation*}
		\Bigl( X^n \, , \, \int_0^\bullet \, H^n_{s} \, \diff X_s^n\Bigr) \;  \; \Rightarrow \; \Bigl( X \, , \, \int_0^\bullet \, H_{s} \, \diff X_s\Bigr) \quad \text{ on } \quad (\D_{\R^{2d}}[0,\infty), \, |\cdot|^*_\infty).
	\end{equation*}
\end{cor}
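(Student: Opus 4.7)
The plan is to prove the claim by first showing joint tightness of $(X^n, Y^n)$ in a topology supporting continuous-path limits, and then identifying the limit through the quadratic bracket structure. Since $X$ has continuous sample paths, $[X]$ is continuous and so is $[Y]_t := \int_0^t H_s^2 \diff [X]_s$. Condition (iii) then gives $[Y^n]_t = \int_0^t (H^n_s)^2 \diff [X^n]_s \Rightarrow [Y]_t$ on M1 with continuous limit, yielding uniform convergence on compacts and, in particular, tightness of $[Y^n]_t$ for every $t>0$. Lenglart's inequality applied to the local martingale $Y^n$ then propagates this to tightness of $|Y^n|^*_t$.

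For M1 tightness of $Y^n$ itself, I would invoke an Aldous--Rebolledo style criterion: for bounded stopping times $S \le T \le S+h$, one has $\E[(Y^n_T - Y^n_S)^2] = \E[[Y^n]_T - [Y^n]_S]$, which tends to zero uniformly as $h \downarrow 0$ by the uniform stochastic continuity of $[Y^n]$ inherited from the continuous limit $[Y]$. This supplies the modulus control needed for M1 (in fact J1) tightness. To show that all subsequential limits are continuous, I would use condition (ii): for $c<\delta$ chosen suitably, tightness of $[Y^n]_T$ gives $\Pro(\tau^n_c > T) \to 1$, so every jump of $Y^n$ on $[0,T]$ agrees with a jump of the stopped process $Y^n_{\bullet\wedge \tau^n_c}$, which vanishes in $L^1$ by assumption. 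Theorem \ref{cor:3.11} then upgrades M1 tightness to J1 tightness, and J1 tightness with continuous limits coincides with tightness in $(\D_{\R^d}[0,\infty), |\cdot|^*_\infty)$.

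For identification, I would extract a subsequence along which $(X^n, Y^n, [Y^n], [X^n, Y^n])$ converges jointly in the uniform topology (the cross bracket is tight via Kunita--Watanabe combined with the tightness of $[X^n]$ and $[Y^n]$). Realising the convergence almost surely through Skorokhod representation, any limit $(\tilde X, \tilde Y, \tilde B, \tilde C)$ has $\tilde Y$ a continuous local martingale in the filtration generated by $(\tilde X, \tilde Y)$, with $\tilde B = [\tilde Y] = \int H^2 \diff [\tilde X]$ and $\tilde C = [\tilde X, \tilde Y] = \int H \diff [\tilde X]$ by condition (iii). The standard orthogonality identity for continuous local martingales,
\[
\bigl[\, \tilde Y - \int_0^\bullet H_s \diff \tilde X_s \,\bigr]_t \; = \; [\tilde Y]_t \, - \, 2 \int_0^t H_s \diff [\tilde X, \tilde Y]_s \, + \, \int_0^t H_s^2 \diff [\tilde X]_s \; = \; 0,
\]
then pins down $\tilde Y = \tilde Y_0 + \int_0^\bullet H_s \diff \tilde X_s$, characterising the law of $(\tilde X, \tilde Y)$ as $(X, Y)$. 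Hence all subsequential limits agree and the full sequence converges.

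The hard part will be the identification step: transferring the integrand $H$, originally defined on the limit probability space of $X$, to an almost-sure realisation of subsequential limits of $(X^n, Y^n)$ in a way that preserves adaptedness and reproduces the bracket identities encoded in condition (iii). Once this coupling bookkeeping is in place, the orthogonality argument and uniqueness in law close the proof.
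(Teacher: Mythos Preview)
Your overall strategy---Aldous-type tightness of $Y^n$ driven by the uniform convergence of $[Y^n]$, followed by identification of subsequential limits via the bracket identity $[Z-Y]=0$---is exactly the paper's. Two steps in your execution, however, do not go through as written.

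First, the Rebolledo identity $\E[(Y^n_T-Y^n_S)^2]=\E[[Y^n]_T-[Y^n]_S]$ is only available for square-integrable martingales, which the $Y^n$ are not assumed to be; and even granting it, ``uniform stochastic continuity'' of $[Y^n]$ is a statement in probability, not expectation, so the right-hand side need not vanish. The paper instead applies Lenglart's inequality (Lemma~\ref{prop:Lenglart}(a)) to the local martingale $Y^n_{\bullet\wedge\sigma^n}-Y^n_{\bullet\wedge\rho^n}$, obtaining
\[
\Pro^n\bigl(|Y^n_{\sigma^n}-Y^n_{\rho^n}|\ge\eta\bigr)\;\le\;\frac{C\bigl(\sqrt{\varepsilon}+\E^n[|\Delta Y^n_{T\wedge\tau^n_\varepsilon}|]\bigr)}{\eta}\;+\;\Pro^n\bigl([Y^n]_{\sigma^n}-[Y^n]_{\rho^n}\ge\varepsilon\bigr),
\]
and this is where condition~(ii) genuinely enters: for $\varepsilon<\delta$ the jump term vanishes as $n\to\infty$, yielding Aldous' criterion in probability (indeed C-tightness), so Theorem~\ref{cor:3.11} is not needed. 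Second, your continuity argument is backwards: since $\tau^n_c=\operatorname{inf}\{s:[Y^n]_s\ge c\}$, one has $\Pro(\tau^n_c>T)=\Pro([Y^n]_T<c)$, which for the \emph{small} $c$ required by~(ii) is small whenever $[Y]_T>0$, not close to $1$. Continuity of the limit is a byproduct of the correctly executed Aldous step, or follows a posteriori once $Z=Y$ is established (as $Y$ is an integral against the continuous $X$).

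For the identification the paper proceeds via the same bracket computation, but rather than Skorokhod-coupling the cross brackets it invokes Corollary~\ref{prop:preserv_quad_var} (applicable since (ii) gives \eqref{eq:Mn_An_condition} for the $Y^n$) to obtain $[Y^n,X^n]\Rightarrow[Z,X]$ directly; combining with $[Y^n,X^n]=\int H^n\diff[X^n]\Rightarrow\int H\diff[X]=[Y,X]$ from~(iii) and uniqueness of weak limits gives $[Z,X]=[Y,X]$, whence $[Z-Y]=0$. The coupling issue you flag is handled by placing $Z$ and $X$ on a common space and reading the bracket identities distributionally. One further point your sketch elides: the paper separately argues that $Z$ is a local martingale (via stopping at $\tau_a(Z)$ and an approximation in the spirit of \cite[Prop.~IX.1.12]{shiryaev}, again using~(ii)), without which $[Z-Y]=0$ does not force $Z=Y$.
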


An inspection of the proof reveals that, instead of assuming \eqref{eq:vanishing_jumps_condition} for the $Y^n$, it alternatively suffices to have \eqref{eq:vanishing_jumps_condition} hold for the integrators $X^n$ if additionally there is tightness of the running supremum of the integrands $H^n$ on compacts. We also note that predictability of the $H^n$ in Corollary \ref{cor:2.14} simply serves to ensure the stochastic integrals are well-defined; it can be replaced by progressive measurability if the $X^n$ are continuous. As a simple illustration, Corollary \ref{cor:2.14} applies to Brownian motions $X^n$ and progressively measurable integrands $H^n$ satisfying $H^n \Rightarrow H$ on $\textbf{L}_{\text{loc}}^2([0,\infty), \R^d)$ in the usual topology. Indeed, \eqref{eq:interplay_cond} is then readily deduced from Skorokhod's representation theorem and the Cauchy--Schwartz inequality.

\subsection{Convergence of simple integrals}\label{sect:simple_integrals_$M_1$}

For simple stochastic integrals, we can have pathwise $M_1$ convergence. We show this from first principles, working directly with the definition of the $M_1$ metric. This will be an important ingredient in the proofs of the weak convergence results stated above.

We shall say that $h:[0,T]\rightarrow \mathbb{R}^d$ is a simple left-continuous path if it is of the form
\begin{equation}\label{eq:ClassicalLeftContinuousPaths} 
	h = h_0 \mathbbm{1}_{\{0\}} + \sum_{i=1}^{k} h_i\mathbbm{1}_{(t_i, \,t_{i+1}]} = h(0) \mathbbm{1}_{\{0\}} + \sum_{i=1}^{k} h(t_{i+1})\mathbbm{1}_{(t_i, \,t_{i+1}]}
\end{equation}
for fixed $h_0,\ldots,h_{k+1} \in \mathbb{R}$ and a strictly increasing sequence of times $0 = t_1  < \cdots < t_{k+1} = T $.

\begin{prop}[$M_1$ continuity of simple integrals]\label{thm:ContinuityOfSimpleIntegralsInM_1Topology} 
	Consider $y, y_n \in \D_{\R^d}[0,T]$, and let $x, x_n\in\D_{\R^d}[0,T]$ be such that $x(\bullet\, -)$ and $x_n(\bullet \, -)$ are simple left-continuous paths of the form \eqref{eq:ClassicalLeftContinuousPaths}, for all $n\ge 1$. Suppose that $\operatorname{Disc}(x) \cap \operatorname{Disc}(y) = \emptyset$ and that the number of jumps of the $x_n$ are uniformly bounded in $n \ge 1$, i.e., there exists $\tilde{K}>0$ such that 
	$$ \sup{n \geq 1} \; \operatorname{card}(\, \{s \in [0,\, T]\, : \, \Delta x_n(s) \neq 0\}\, ) \; \leq \; \tilde{K}.$$
	If $x_n\to x$ and $y_n \to y$ in $(\D_{\R^d}[0,T]\, ,\, \dM )$, then
\begin{equation}\label{eq:simple_integral_map}\Bigl(x_n, \,y_n,\!\int_0^\bullet \!\!x_n(s-) \diff y_n(s)\Bigr) \rightarrow  \Bigl(x, \,y,\!\int_0^\bullet\!\! x(s-)\diff y(s)\Bigr)\quad \text{in}\quad (\D_{\mathbbm{R}^{3d}}[0,\,T]\, , \,\dM).
    \end{equation}
\end{prop}

\section{Martingale integrators} \label{sect:martingale_integrators}

This section first explores when and why the weak continuity results for stochastic integrals may fail for local martingale integrators. Next, we study $J_1$ versus $M_1$ tightness of local martingales. Finally, we study the preservation of the local martingale property in the limit.

\subsection{Exploding integrals for uniformly vanishing martingales}\label{sect:GD_role_sharpness}

Based on an example due to Jacod, \cite[Ex.~7.14]{graham} and \cite[Ex.~VI.6.27]{shiryaev} highlight that the convergence of stochastic integrals can go wrong without enough control on the integrators. The examples consider deterministic finite variation integrators whose total variation diverges on compacts. Surprisingly, we have not been able to locate any examples that address how, or even if, things can go wrong with local martingale integrators. Here, we provide a novel example that illustrates why the control on the jumps in \eqref{eq:Mn_An_condition} is needed to avoid explosion of the Itô integrals in Theorem \ref{thm:3.19}. This will turn out to serve as a counterexample to two existing claims in the literature, as we discuss below. The example constructs a sequence of martingales that converge uniformly to zero but fail to have good decompositions \eqref{eq:Mn_An_condition}.

\begin{example}\label{ex:main_counterexample}
	Fix $T>0$ and $\varepsilon\in(0,1)$. Set $\Omega:=[0,T]$ and $\mathcal{F}:=\B([0,T])$, equipped with $\Pro:= T^{-1}(1-\varepsilon)\text{{\fontfamily{cmss}\selectfont Leb}} + \varepsilon \delta_T$, where $\delta_{T}$ is the Dirac measure at $T$. Now let $\tau := \operatorname{Id}$ on $ [0,T]$, and define a filtration by $\mathcal{F}_t:= \sigma(\B([0,t\wedge T]))$ for which $\tau$ is then a stopping time. Next, define
	\begin{align*} f(t)  &:= \begin{cases} 
			t^{-1} \cos \bigl(t^{-1} \bigr) \, - \, \sin\bigl(t^{-1} \bigr),  \quad & t \neq 0 \\
			0, \quad & t=0,
		\end{cases}\\
		g^n(t) &:= (T+ n^{-1} -t)\ind_{[T+n^{-1}-n^{-1/2},\,  \, T)}(t),
	\end{align*}
	for each $n\geq 1$. We are then interested in a martingale that jumps exactly at the random time $\tau$, but only if $\tau < T$, and which has a random jump size of $\Psi^n(\tau)$, where
	$$ \Psi^n(t) \; := \; \frac{1}{1-\varepsilon} \; f(g^n(t)) \; \Pro((t,T]).$$ For $0<s<t<T$ and $t\ge T+n^{-1}-n^{-1/2}$, we have
	\begin{align*}
		\E \left[ \Psi^n(\tau)\, \ind_{\{\tau \le t\}} \, | \, \mathcal{F}_s \right] \; = \; \Psi^n(\tau)\, \ind_{\{\tau \le  s\}} \; + \; \ind_{\{\tau >s \}} \, \E \left[ \Psi^n(\tau)\, \ind_{\{s< \tau \le t\}} \, | \, \tau > s \right]
	\end{align*}
	where we have used that $\{\tau > s\}$ is an atom of $\mathcal{F}_s$. Now, for $\tilde \omega \in \{\tau >s\}$, we have
	\begin{align*}
		\E \left[ \Psi^n(\tau)\, \ind_{\{s< \tau \le t\}} \, | \, \tau > s \right](\tilde \omega) \; &= \; \int_{\Omega} \Psi^n(\tau(\omega))\, \ind_{\{s< \tau(\omega) \le t\}} \; \Pro(\diff \omega\,  | \, \tau > s) \\
		&= \; \frac{1-\varepsilon}{ T \Pro(\tau>s)}\;  \int_s^t \Psi^n(x)\, \diff x,
	\end{align*}
	which is clearly not zero in general, but we can correct for this by noting that
	\begin{align*}
		(1-\varepsilon) \, \int_s^t \Psi^n(x) \diff x  \; &= \;  \int_s^t f(g^n(x)) \diff x \; - \;  T^{-1}(1-\varepsilon) \int_s^t f(g^n(x))  x   \diff x \\
		&= \;  [\Lambda^n(x)]^t_s \; - \;  T^{-1}(1-\varepsilon)[\Lambda^n(x) x]^t_s \; + \; T^{-1}(1-\varepsilon) \int_s^t \Lambda^n(x) \diff x \\
		&= \; \Lambda^n(t)\Pro(\tau >t) - \Lambda^n(s)\Pro(\tau>s) \; + \; T^{-1}(1-\varepsilon) \int_s^t  \Lambda^n(x) \diff x,
	\end{align*}
	where
	$\Lambda^n(t) \; := \; g^n(t) \sin(g^n(t)^{-1})$. This
	is essentially the antiderivative of $f\circ g^n$ on $(T+n^{-1}-n^{-1/2},T)$. Note that $t\mapsto T^{-1} \Lambda^n(t\wedge \tau)$ is predictable, as a composition of the deterministic map $t\mapsto T^{-1}\Lambda^n(t)$ and the continuous process $t\mapsto t\wedge \tau$. Given the above computations, one readily sees that it compensates $t\mapsto \Psi^n(\tau) \ind_{\{\tau \le t\}}$. Defining $M^n_t:= \Psi^n(\tau) \ind_{\{\tau \le t\}} \, - \, T^{-1}\Lambda^n(t\wedge \tau)$, we thus obtain a martingale, for all $n\geq 1$. Figure \ref{fig:7} provides an illustration of this construction. Since $\Psi^n(t)=0$ for all $t \notin [T+n^{-1}-n^{-1/2},T)$,
	$$ |M^n(\omega)|^*_\infty \; \le \; |\Psi^n(\omega)| \; + \; T^{-1}|g^n|^*_T \; \le \; |\Psi^n(\omega)| \; + \; T^{-1} n^{-1/2} \; \rightarrow \;  0,$$
	as $n\rightarrow \infty$, for all $\omega \in \Omega$. Hence, $M^n$ converges to zero almost surely in the uniform norm.
    
   We also see that the processes $t \mapsto  \Psi^n(\tau) \ind_{\{\tau \le t\}}$ are of tight total variation. However, the predictable compensator will have exploding total variation on the event $\{\tau=T\}$ (which occurs with probability $\varepsilon>0$). Indeed, writing $\phi(x):= x \sin(1/x)$, it holds for any $\omega \in \{\tau=T\}$ that
\begin{align*}
	&\operatorname{TV}_{[0,T]}(\Lambda^n(\bullet \wedge \tau(\omega)))\; = \; \operatorname{TV}_{[T+n^{-1}-n^{-1/2}, \, T)}(\Lambda^n)\;  = \; \operatorname{TV}_{[n^{-1}, \, n^{-1/2})}(\phi), \\
	&\operatorname{TV}_{[n^{-1}, \, n^{-1/2})}(\phi)\; \; \ge \; \; \sum_{k\; : \; n^{-1} \, \le \, (2\pi k)^{-1}\; , \; (\frac{4k+1}{2} \pi)^{-1}\, \le \, n^{-1/2}} \; \frac{4k+1}{2} \pi \; \ge \; 2\pi \sum_{k= \lceil \frac{\sqrt{n}}{2\pi}\rceil  }^{\lfloor \frac{n}{2\pi}\rfloor} \; k,
\end{align*}
where the sum on the right-hand side tends to infinity as $n\to \infty$. Since $t \mapsto \Lambda^n(t \wedge \tau)$ is continuous on $[T+n^{-1}-n^{-1/2},\, T)$ and zero outside of it, for each $n\geq 1$ there is a left-continuous process $H^n$ with $|H^n|=1$ so that $\int_0^T n^{-1/2} \, H^n_s \diff \Lambda^n(s \wedge \tau)= n^{-1/2} \operatorname{TV}_{[0,T]}(\Lambda^n(\bullet \wedge \tau))$ which tends to infinity on a set of positive probability by what we just saw. In particular, the stochastic integrals of the left-continuous integrands $n^{-1/2} H^n$ against $M^n$ explode. Since both sequences converge to zero uniformly, almost surely, Proposition \ref{prop:3.3} and Theorem \ref{thm:3.19} imply that the $M^n$ fail to have good decompositions, as we would otherwise have a contradiction.
\end{example}

\begin{figure}[h]
	\begin{tikzpicture}[scale=0.85]
		\begin{axis}[
			xmin=0.85,xmax=1.025,
			ymin=-6,ymax=6,
			xlabel = {$t$},
			ylabel = {$\Psi^n(t)$},
			ytick = {},
			ytick style={draw=none},
			yticklabels={,,},
			xtick = {0.89,0.943,1},
			xtick style={draw=none},
			xticklabels={${\small T-\frac{\sqrt{n}-1}{n}} \qquad \; $,{\tiny \color{red!50} $\tau(\omega)$},$\, \quad T$},
			axis x line=middle,
			axis y line=middle,
			every axis x label/.style={at={(ticklabel* cs:1.03)}, anchor=west},
			every axis y label/.style={at={(ticklabel* cs:1.03)},anchor=south},
			]
			\addplot[cmhplot,-,domain=0:0.89]{0};
			\addplot[cmhplot,-,smooth, samples=500,domain=0.89:1]{(10/9)*((1/10)+(9/10)*(1-x))*(1/(1+(1/45)-x)*cos(deg((1/(1+(1/45)-x))) - sin(deg(1/(1+(1/45)-x))))};
			\addplot[cmhplot,-,domain=1:1.025]{0};
			\addplot[soldot]coordinates{(0.89,0.5)};
			\addplot[holdot]coordinates{(1,2.6)};
			\addplot[holdot]coordinates{(0.89,0)};
			\addplot[soldot]coordinates{(1,0)};
			\addplot +[mark=none, color=red!50] coordinates {(0.943, 0) (0.943, 2.2)};
			\addplot[soldot, color=red!50]coordinates{(0.943,2.1)} node[above,pos=1] {\tiny $\Psi^n(\tau(\omega))$} ;
		\end{axis}
	\end{tikzpicture}
	\quad
	\begin{tikzpicture}[scale=0.85]
		\begin{axis}[
			xmin=0.85,xmax=1.025,
			ymin=-0.4,ymax=0.4,
			xlabel = {$t$},
			ylabel = {$\Lambda^n(t)$},
			ytick = {},
			ytick style={draw=none},
			yticklabels={,,},
			xtick = {0.89,0.951, 1},
			xtick style={draw=none},
			xticklabels={$T-\frac{\sqrt{n}-1}{n}\; \;$,{\tiny \color{red!50} $\tau(\omega)$},$\, \quad T$},
			axis x line=middle,
			axis y line=middle,
			every axis x label/.style={at={(ticklabel* cs:1.03)}, anchor=west},
			every axis y label/.style={at={(ticklabel* cs:1.03)},anchor=south},
			]
			\addplot[cmhplot,-,smooth, samples=500, domain=0.89:1]{(1+(1/45)- x)*sin(deg(1/(1+(1/44)-x)))};
			\addplot[cmhplot,-,domain=0:0.89]{0};
			\addplot[cmhplot,-,domain=1:1.025]{0};
			\addplot[soldot]coordinates{(0.89,0.125)};
			\addplot[holdot]coordinates{(1,0.019)};
			\addplot[holdot]coordinates{(0.89,0)};
			\addplot[soldot]coordinates{(1,0)};
			\addplot +[mark=none, color=red!50] coordinates {(0.951, 0) (0.951, 0.065)};
			\addplot[soldot, color=red!50]coordinates{(0.951,0.066)} node[above,pos=1] {\tiny $\Lambda^n(\tau(\omega))$} ;
		\end{axis}
	\end{tikzpicture}
	\caption{Graphical representation of the functions $\Psi^n$ and $\Lambda^n$ with a realisation of $\tau$.}
	\label{fig:7}
\end{figure}
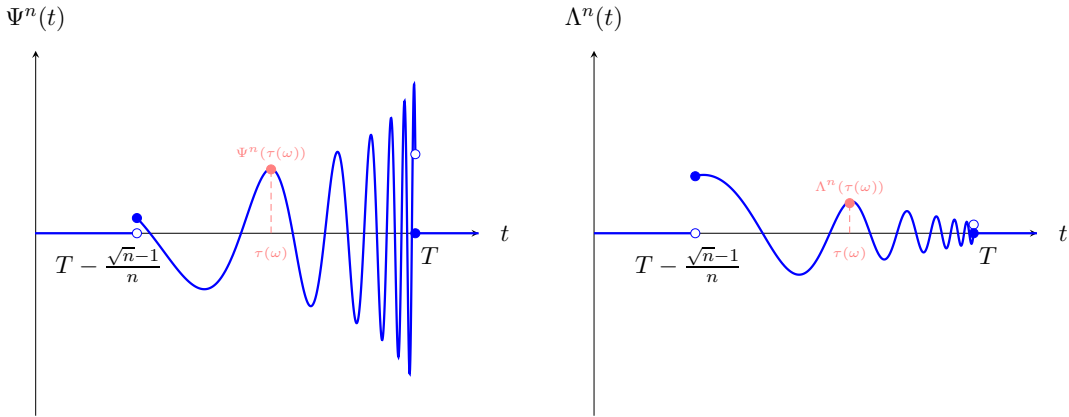

In the econometrics literature, the classical results of \cite{kurtzprotter, jakubowskimeminpages} have been used extensively, especially in work on cointegration theory \cite{paulaskas_rachev}. Within this literature, the recent note \cite{fabozzi_comment} pointed to a gap in the proof of \cite[Thm.~2]{caner1}. In particular, \cite[Thm.~2]{caner1} says that if $(X^n)_{n\geq 1}$ is a weakly $J_1$ convergent sequence of locally square-integrable martingales, then \eqref{eq:concerted_stoch_int_conv} holds in the $J_1$ topology whenever $(H^n,X^n)\Rightarrow (H,X)$ on $\D_{\mathbb{R}^2}[0,\infty)$. The aforementioned note \cite{fabozzi_comment} warned that \emph{``One is bound to believe that Theorem 2 in Caner (1997) is false''}, but they stopped short of constructing a counterexample. Example \ref{ex:main_counterexample} provides the missing counterexample.

We also note that Example \ref{ex:main_counterexample} yields a counterexample to a sufficient condition for $J_1$ convergence of stochastic integrals presented in \cite{duffie_protter}. In short, \cite[Lem.~4.1]{duffie_protter} says that if $X^n=M^n+A^n$ are special semimartingales for which the $A^n$ are of tight total variation, then also the predictable finite variation parts of the canonical decompositions of the $X^n$ will be of tight total variation. But then suppose $(M^n)_{n\geq 1}$ is any sequence of martingales that is $M_1$ tight and set
\begin{equation}\label{eq:counterex_remove_jumps}
X^n := M^n - J^n,\quad J^n_t:=\sum_{s\, \le \, t} \Delta X^n_s \ind_{\{|\Delta X^n_s|\ge  1\}}.
\end{equation}
These are semimartingales with jumps bounded by $1$, so they are special and have canonical decompositions $X^n=\hat{M}^n+\hat{A}^n$ with the jumps of the local martingale parts $\hat{M}^n $ bounded by $2$ (see e.g.~\cite[Lem.~I.4.41]{shiryaev}). Also, it follows from the $M_1$ tightness and Corollary \ref{cor:A9} that the $J^n$ in \eqref{eq:counterex_remove_jumps} are of tight total variation, so the aforementioned lemma would give that the $\hat{A}^n$ are of tight total variation. But then we have $M^n=\hat{M}^n+(\hat{A}^n+J^n)$, where the local martingales have jumps that are bounded uniformly in $n\geq 1$, and the finite variation terms are of tight total variation, so $M^n$ admits good decompositions \eqref{eq:Mn_An_condition}. Letting $M^n$ be as in Example \ref{ex:main_counterexample} we obtain a contradiction, so \cite[Lem.~4.1]{duffie_protter} is not true in general.

\subsection{$M_1$ versus $J_1$ tightness of local martingales}\label{sect:$M_1$_martingales_and_counterex} 

Later in this work, we shall discuss applications where the integrators are continuous-time random walks. Section \ref{subsect:intro_CTRW} provides a brief introduction to these. In \cite{Becker-Kern,meerschaert}, Becker-Kern, Meerschaert \& Scheffler showed that certain (rescaled) continuous-time random walks with i.i.d.~infinite mean waiting times and infinite variance jumps converge weakly in $M_1$ 
to an $\alpha$-stable L\'evy process time-changed by the generalised inverse of a $\beta$-stable subordinator. Several years later, Henry \& Straka \cite{henry_straka} proved that there is in fact $J_1$ convergence, relying on much more intricate arguments, while the $M_1$ case benefited from advantageous properties of composition in the $M_1$ topology. These findings turn out to be a special instance of a more general principle that we establish in Theorem \ref{cor:3.11} below: as the continuous-time random walks in question are sufficiently well-behaved martingales for $\alpha \in (1,2]$, their $J_1$ tightness is in fact automatic once there is $M_1$ tightness. As the next example illustrates, such a principle cannot be true without conditions.

\begin{example}\label{ex:$M_1$_not_$J_1$_martingales} For $n\geq 1$, let $\xi^n_1$ and $\xi^n_2$ be independent with  $\Pro(\xi^n_1=1)=\Pro(\xi^n_2=1)=1-1/n$ as well as $\Pro(\xi^n_1=-n+1)=\Pro(\xi^n_2=-n+1)=1/n$ on a common probability space $(\Omega , \mathcal{F},\mathbb{P})$. Consider the martingales $X^n:= \xi^n_1 \ind_{[1-2/n,\infty)}+\xi^n_2\ind_{[1-1/n,\infty)}$ in their natural filtrations. In the $M_1$ topology, these processes converge in probability to the limit $X= 2 \ind_{[1,\infty)}$, yet they clearly do \emph{not} converge weakly in the $J_1$ topology.
\end{example}

Notice that, in this example, the expectation of extreme values of the martingales does not clear away in the limit. That is, $\E[|X^n_1|\ind_{\{|X^n_1|\ge K\}}] \to 2$ as $n\to \infty$ for all $K>2$. In fact, the two consecutive jumps of size $1$, which appear with large probability and approximate, in $M_1$, a single limiting jump, are compensated by larger and larger jumps of opposite sign on events of increasingly small probability. Calling for uniform integrability of the sequence rules this out. To be precise, a family $\mathcal{A}:=\{X^i :  i \in I\}$ of processes $X^i$ on probability spaces $(\Omega^i, \mathcal{F}^i, \Pro^i)$ is said to be \emph{uniformly integrable} if, for every $\varepsilon>0$ and $T>0$, there exists $\delta:=\delta(\varepsilon, T)>0$ such that, for all $i\in I$, $0\le t \le T$ and $\Lambda_i \in \mathcal{F}^i$ with $\Pro^i(\Lambda_i)< \delta$, it holds that $\E^i[\, |X^i_t| \, \ind_{\Lambda_i}\, ] < \varepsilon$.

We note that if the elements of the family $\mathcal{A}$ have tight running suprema on all compact time intervals (which in particular holds if $\mathcal{A}$ is $M_1$ tight), then the above definition is equivalent to the following commonly used notion of uniform integrability in probability theory: for every $T>0$, we have $\operatorname{sup}_{i \in I,\, 0\le t \le T} \, \E^i[ |X^i_t| \ind_{\{|X^i_t|\ge K\}}] \to 0$ as $K \to \infty$. 

In order to inhibit the problematic situations described above, it will in fact suffice to have a suitable `localised' form of uniform integrability, as defined next.

\begin{defi}[Localised uniform integrability]\label{defn:local_UI} Consider a family $\mathcal{A}:=\{X^i :  i \in I\}$ on filtered probability spaces $(\Omega^i, \mathcal{F}^i, \mathbbm{F}^i, \Pro^i)$. We say that $\mathcal{A}$ is  \emph{localised uniformly integrable} if there are sequences of $\mathbbm{F}^i$--stopping times $(\rho^i_\ell)_{\ell\ge 1}$ so that, for every $\ell\ge 1$, the family $\mathcal{A}_{\ell}:=\{X^i_{\bullet \wedge \rho^i_\ell} :  i \in I\}$ is uniformly integrable and, for all $T>0$, we have $\operatorname{sup}_{i\in I} \Pro^i(\rho^i_\ell \le T) \to 0$ as $\ell\to \infty$.
\end{defi} 

We then obtain the following structural result for local martingales on Skorokhod space.

\begin{theorem}[$M_1$ vis-à-vis $J_1$ relative compactness for martingales] \label{cor:3.11} 
	Let $\mathcal{A}:=\{X^i :  i \in I\}$ be a family of martingales taking values in $\mathbf{D}_{\mathbb{R}^d}[0,\infty)$ and defined on given filtered probability spaces $(\Omega^i, \mathcal{F}^i, \mathbbm{F}^i, \Pro^i)$. Let also $\mathcal{A}$ be localised uniformly integrable in the sense of Definition \ref{defn:local_UI}. If the family $\mathcal{A}$ is weakly relatively compact in $\mathbf{D}_{\mathbb{R}^d}[0,\infty)$ for the $M_1$ topology, then it must also be weakly relatively compact for the $J_1$ topology.
\end{theorem}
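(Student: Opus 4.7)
My strategy is to argue by contraposition, combining a localisation step that reduces the problem to genuinely uniformly integrable martingales with Aldous's classical characterisation of J1 tightness and the M1 modulus control. Suppose that $\mathcal{A}$ is M1 weakly relatively compact but \emph{not} J1 weakly relatively compact. Using the stopping times $\rho^i_\ell$ from Definition \ref{defn:local_UI}, the stopped family $\{X^i_{\bullet \wedge \rho^i_\ell}\}_{i \in I}$ is a family of true UI martingales for each fixed $\ell$. Since $\sup_{i\in I}\Pro^i(\rho^i_\ell \le T) \to 0$ as $\ell \to \infty$ for every $T>0$, weak J1 (respectively M1) relative compactness of $\mathcal{A}$ is equivalent to that of each stopped family. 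Hence it suffices to prove the theorem under the stronger assumption that $\mathcal{A}$ itself is uniformly integrable.

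\emph{Aldous step.} Extract a sequence $(X^n)_{n\ge 1}\subseteq\mathcal{A}$ (with associated $\F^n$, $\Pro^n$) having no J1 convergent subsequence. Since M1 relative compactness implies the compact containment condition $\sup_n \Pro^n(|X^n|^*_T>K)\to 0$ as $K\to\infty$, the failure of J1 tightness must come from the Aldous piece: there exist $\varepsilon,\eta>0$ and $\F^n$-stopping times $\tau^n\le\sigma^n\le\tau^n+\delta_n$ with $\delta_n\downarrow 0$ and
\[\Pro^n\!\left(|X^n_{\sigma^n}-X^n_{\tau^n}|\ge\varepsilon\right)\ge\eta\]
along a subsequence. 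Optional stopping (valid by UI) gives $\E^n[X^n_{\sigma^n}-X^n_{\tau^n}]=0$, so the positive and negative parts of $X^n_{\sigma^n}-X^n_{\tau^n}$ have equal $L^1$ mass. A routine UI argument (the negative part cannot concentrate its mass on a vanishingly small set) then supplies uniform constants $\varepsilon',\eta'>0$ for which both sub-events $\{X^n_{\sigma^n}-X^n_{\tau^n}\ge\varepsilon\}$ and $\{X^n_{\sigma^n}-X^n_{\tau^n}\le-\varepsilon'\}$ have probability at least $\eta'$ along the subsequence.

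\emph{Contradicting M1 modulus.} By the M1 compactness characterisation (Theorem \ref{thm:A.8}), there is $\delta_0>0$ with $\sup_n \Pro^n(w^{M}(X^n,\delta_0,T)>\kappa)\le\eta'/4$ for any fixed $\kappa>0$ we may choose small. For large $n$, $\delta_n<\delta_0$, and the M1 modulus bound forces the path to be $\kappa$-close to a monotone function on $[\tau^n,\sigma^n]$. Combined with the previous step, we then have a pair of events of probability $\ge\eta'/2$ each, on which the martingale executes an almost-monotone upward (respectively downward) motion of net size $\ge\varepsilon$ (respectively $\ge\varepsilon'$) within a time interval of length $\delta_n\to 0$. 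Introduce intermediate stopping times $\rho^n_k = \inf\{s\ge\tau^n : X^n_s \ge X^n_{\tau^n} + k\varepsilon/K\}\wedge\sigma^n$, for $k=1,\dots,K$, and iterate the optional stopping identity on each. The UI-based compensation argument at each level produces non-negligible sub-events where the conditional downward motion from level $X^n_{\tau^n}+k\varepsilon/K$ reverses by a uniform amount. On such a sub-event, the three values $X^n_{\tau^n}, X^n_{\rho^n_k}, X^n_{\sigma^n}$ fail to be monotonically ordered, contributing $w^{M}(X^n,\delta_0,T)$ of order $\varepsilon/K$. Taking $K$ large enough that $\varepsilon/K$ still exceeds the tolerance $\kappa$ gives a contradiction to the M1 modulus bound.

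\emph{Main obstacle.} The technical crux lies in the last paragraph: converting the unconditional martingale identity together with the UI hypothesis into a quantitative lower bound on the probability of those \emph{non-monotone} M1 signatures at the chosen intermediate stopping times. This requires a delicate calibration of $\varepsilon',\eta',\kappa,K$ and $\delta_0$, carried out alongside iterated conditional uses of optional stopping. The substantive content beyond the formal reduction is precisely the observation that, for a martingale, a committed almost-monotone climb over a vanishingly short interval cannot coexist with the compensating descent demanded by UI \emph{without} creating a non-monotone triple that inflates the M1 modulus---an observation that fails without UI and without the martingale property, as isolated by the example preceding the theorem.
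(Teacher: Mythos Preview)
Your reduction to uniformly integrable martingales is fine, and you correctly deduce from the sufficiency of Aldous's criterion that \emph{not} J1 tight together with compact containment forces the two-point Aldous condition to fail, producing stopping times $\tau^n\le\sigma^n\le\tau^n+\delta_n$ with $\Pro^n(|X^n_{\sigma^n}-X^n_{\tau^n}|\ge\varepsilon)\ge\eta$. The gap is that from this point on your argument uses only the three ingredients (Aldous fails, M1 tight, UI martingale) to manufacture a contradiction. That cannot succeed: take the constant sequence $X^n_t=\xi\,\ind_{[1,\infty)}(t)$ with $\Pro(\xi=\pm1)=\tfrac12$. This is a UI martingale, trivially M1 (and J1) tight, yet the two-point Aldous condition fails (with $\tau^n=1-1/n$, $\sigma^n=1$). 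Your steps 3--6 would apply verbatim to this sequence and must therefore fail to yield a contradiction. Indeed, in this example every intermediate level stopping time $\rho^n_k=\inf\{s\ge\tau^n:X^n_s\ge X^n_{\tau^n}+k\varepsilon/K\}\wedge\sigma^n$ collapses to $\sigma^n$, and no non-monotone triple $(X^n_{\tau^n},X^n_{\rho^n_k},X^n_{\sigma^n})$ ever appears. The ``compensating descent demanded by UI'' occurs on the \emph{disjoint} event $\{\xi=-1\}$, not within the interval $[\tau^n,\sigma^n]$ on climbing paths, so it creates no M1 oscillation. After step 2 you have effectively discarded the hypothesis that J1 tightness fails.

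What the paper does differently is precisely to retain that information: it first proves a bespoke three-point Aldous-type criterion (Proposition~\ref{prop:alternative_J1_tightness_criterion}) whose failure yields
\[
\Pro^n\Bigl(\;\sup_{0\le s\le n^{-1}}\,\bigl|X^n_{(\tau^n-s)\vee 0}-X^n_{\tau^n}\bigr|\,\wedge\,\bigl|X^n_{\tau^n}-X^n_{\sigma^n}\bigr|\;>\;\eta_1\Bigr)\;\ge\;\varepsilon_1,
\]
i.e.\ a genuine oscillation built from a \emph{backward} increment and a forward one. The backward piece is essential: the event $\Gamma=\{\sup_{s\le n^{-1}}(X^n_{\tau^n}-X^n_{(\tau^n-s)\vee0})>\eta_1\}$ is $\mathcal{F}^n_{\tau^n}$-measurable, so $\tilde\tau^n:=\tau^n\ind_\Gamma+T\ind_{\Gamma^\complement}$ and $\tilde\sigma^n:=\sigma^n\ind_\Gamma+T\ind_{\Gamma^\complement}$ are bona fide stopping times. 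On $\Gamma$ intersected with $\{w''_\theta(X^n)\le\eta_2\}$ the M1 modulus forces the triple to be monotone (two consecutive ascents), giving $X^n_{\tilde\sigma^n}-X^n_{\tilde\tau^n}>\eta_1$ on a set of probability $\ge 3\varepsilon_1/4$; the complementary sets are controlled by the M1 modulus and UI, and one arrives at $\E[X^n_{\tilde\sigma^n}]>\E[X^n_{\tilde\tau^n}]$, contradicting optional stopping. Your forward-only construction cannot manufacture such an $\mathcal{F}_{\tau^n}$-measurable event, which is why the optional-stopping contradiction never closes.
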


For any sequence $(X^n)_{n\ge 1}$ of local martingales converging weakly on $\D_{\R^d}[0,\infty)$ to some $X$, we can find a sequence of stopping times $(\tau^n)_{n\ge 1}$ such that the $X^n_{\bullet \wedge \tau^n}$ are martingales and such that there is equivalence between $X^n \Rightarrow X$ and $X^n_{\bullet \wedge \tau^n} \Rightarrow X$. Indeed, the local martingale property allows us to find, for each $n\ge 1$, an $\mathbbm{F}^n$-stopping time $\tau^n$ such that $X^n_{\bullet \wedge \tau^n}$ is an $\mathbbm{F}^n$-martingale and it holds $\Pro^n(\tau^n \le n) \le n^{-1}$. The $M_1$ metric on $[0,\infty)$ being fully determined by the $M_1$ distance on compact time intervals (see Definition \ref{def:$M_1$_metric}), one obtains $|\,\E^n[f(X^n)]-\E^n[f(X^n_{\bullet \wedge \tau^n})]\,| \to 0$ as $n\to \infty$ for all bounded, $M_1$-Lipschitz $f$. Hence, we are able to deduce the following result from Theorem \ref{cor:3.11}.

\begin{cor}[$M_1$ vis-\`a-vis $J_1$ convergent local martingales] \label{cor:$M_1$_conv_impl_$J_1$_conv_martingales}
	Let $(X^n)_{n\ge 1}$ be a localised uniformly integrable sequence of local martingales on filtered probability spaces $(\Omega^n, \mathcal{F}^n, \mathbbm{F}^n, \Pro^n)$ converging weakly on $(\mathbf{D}_{\mathbb{R}^d}[0,\infty), \dM)$ to some càdlàg limit $X$. Then, $X^n \Rightarrow X$ on $(\mathbf{D}_{\mathbb{R}^d}[0,\infty), \dJ)$.
\end{cor}

Regarding the works on limit theorems for continuous-time random walks discussed above, we stress that the processes studied therein are indeed localised uniformly integrable martingales (see Proposition \ref{prop:CTRW_are_localised_UI}).

\subsection{Preserving local martingality in the limit} \label{sect:preservation_of_local_martingality}

By Proposition \ref{prop:preserve_semimartin}, if a sequence of local martingales is weakly $M_1$ convergent and satisfies \eqref{eq:Mn_An_condition}, then the limit is at least a semimartingale. While $M_1$ or $J_1$ limits of (local) martingales need not be local martingales in general (see e.g.~Example \ref{ex:$M_1$_not_$J_1$_martingales}), a natural question is how close the jump condition in \eqref{eq:Mn_An_condition} is to ensuring this. As follows from Example \ref{ex:martingales_GD_directly}, the latter is not sufficient. However, we can give a positive answer when strengthening the condition to
\begin{align}
		\lim\limits_{K\to \infty} \, \limsup\limits_{n\to \infty} \, \E^n\Big[ |\Delta M^n_{t\wedge \tau^n_c}| \ind_{\{|\Delta M^n_{t\wedge \tau^n_c}| \,> \, K\}}\Big] \; = \; 0 \label{eq:localised_uniform_integrability_of_martingale_jumps}
	\end{align} 
	for all $t>0$ and $c>0$. A sequence of local martingales satisfying this condition is in particular localised uniformly integrable, but our arguments need the additional control of \eqref{eq:localised_uniform_integrability_of_martingale_jumps}.

	\begin{prop}[Local martingale limits] \label{prop:propagation_of_local_martingale_property}
		Let $(H^n,M^n)$ be defined on probability spaces $(\Omega^n, \mathcal{F}^n, \Pro^n)$ so that each $M^n$ is a local martingale for the natural filtration generated by the pair $(H^n,M^n)$. Suppose that $(H^n,M^n) \Rightarrow (H,X)$ on $(\D_{\R^d}[0,\infty),\dM)^2$. If the $M^n$ satisfy \eqref{eq:localised_uniform_integrability_of_martingale_jumps}, then $X$ is a local martingale for the natural filtration generated by the pair $(H,X)$.
	\end{prop}

Existing results, such as \cite[Props.~IX.1.12 \& IX.1.17]{shiryaev}, are stated for the $J_1$ topology and have as a condition either uniform boundedness of the jumps or full uniform integrability. If there is uniform integrability, then it is immediate that just finite-dimensional convergence suffices to preserve the true martingale property in the limit, but a similar statement cannot be made for local martingales without significant additional conditions.

\section{An alternative in place of the AVCI condition}\label{sect:avci_alternative}

Beyond the sufficient conditions in Proposition \ref{prop:3.3}, checking for \eqref{eq:oscillcond} directly could be a rather intricate task. In this section, we thus provide an alternative criterion which works well in cases where there is a form of `nearly consecutive local independence' of the integrands $H^n$ and integrators $X^n$ along with some regularity of the increments of the integrators.

\subsection{Weak continuity of Itô integrals revisited}\label{subsect:alternative_criteria}

We will say two systems $\mathcal{A}, \mathcal{B}$ of measurable sets are \emph{{\footnotesize $\le$}-independent} if $\Pro(A\cap B) \le \Pro(A) \Pro(B)$ for all $A \in \mathcal{A}$, $B \in \mathcal{B}$, and we then write $\mathcal{A} \indep^{\!\footnotesize \le} \mathcal{B}$. Independence is of course a special case.

\begin{theorem}[Weak continuity of Itô integrals under AVCI alternative] \label{prop:4.36}  
	Let the assumptions of Theorem \ref{thm:3.19} hold except for \eqref{eq:oscillcond}. Suppose the following holds for every $T>0$: for every sequence of $H^n$-stopping times $\{\nu^n\}_{n\ge 1}$ bounded by $T$ there exists a sequence of (random) partitions $\{[\sigma_k^{n}, \sigma_{k+1}^{n}) \, : \, k \ge 1 \}_{n\ge 1}$ of $[0,T)$, where the $\sigma_k^{n}$ are $\mathbbm{F}^n$-stopping times, as well as random variables $Y^{n,k,T}_\delta$ with 
    $$ \operatorname{sup}\,  \{ |X^n_{r\wedge T}  -  X^n_{s}| \; : \; \sigma^{n}_{k} \le  s  <  \sigma^{n}_{k+1}  \le  r  \le  \sigma^{n}_{k+1}   +  \delta \}\; \le \; Y^{n,k,T}_\delta $$ 
    almost surely for all $n,k\ge 1$, $\delta>0$, satisfying that
	\begin{enumerate}[\quad (a)] 
		\item there exists $\tilde \delta>0$ such that for all $n, k \ge 1$ and $0<\delta \le \tilde{\delta}$ it holds that
		$$ \big\{ \, \nu^n \; \in \; [ \sigma^{n}_k, \, \sigma^{n}_{k+1} ) \, \big\} \; \; \indep^{\footnotesize \le} \; \; Y^{n,k,T}_\delta , $$
		\item for every $\lambda>0$ it holds that\,  $\lim_{\delta \downarrow 0} \limsup_{n\to \infty} \operatorname{sup}_{k \ge 1} \; \Pro^n\big(Y^{n,k,T}_\delta > \lambda \big) = 0.$
	\end{enumerate}
	Then, $X$ is a semimartingale in the natural filtration generated by the pair $(H, X)$ and
	\begin{equation}\label{eq:conv_avci_alternative}\Big( \, X^n, \, \int_0^\bullet \, H^n_{s-} \; \diff X^n_s\, \Big) \;  \; \Rightarrow \; \Big( \, X, \, \int_0^\bullet \, H_{s-} \; \diff X_s\, \Big) \quad \text{ in } \quad (\D_{\R^{2d}}[0,T], \, \rho)
    \end{equation}
	where $\rho \in \{\dJ,\dM\}$ depending on the topology in which $X^n$ converges. If $(H^n, X^n)$ converge in $\D_{\R^{2d}}[0,\infty)$ then the above convergence holds on $\D_{\R^{3d}}[0,\infty)$ together with $H^n$.
\end{theorem}
\begin{proof}The proof is given together with that of Theorem \ref{thm:3.19} in Section \ref{sect:proof_main_weak_cont}. Once the approximation property \eqref{eq:approx_simpleint_by_better_simpleint} is established under the alternative conditions of Theorem \ref{prop:4.36} (see Section \ref{subsec:Conseq_of_AVCO}), the proof for both theorems is identical (see Section \ref{sect:proof_main_weak_cont}).
\end{proof}

We stress that one can replace \eqref{eq:oscillcond} in Theorem \ref{thm:2.12} and Proposition \ref{remark:weaker_integrnd_conv} by the conditions of Theorem \ref{prop:4.36} and obtain the same respective results.

\begin{example} 
	Let $(X^n)_{n\ge 1}$ satisfy \eqref{eq:Mn_An_condition} and suppose that, for each $n\ge 1$, $X^n$ has stationary increments such that, for all $0<s\le t$, the increment $X^n_t-X^n_s$ is independent of $\sigma(X^n_r : 0\le r \le s-\alpha_n)$. Suppose also that $(H^n)_{n\ge 1}$ are càdlàg processes adapted to the delayed filtrations $ \{ \sigma(X^n_s : 0\le s \le t-2\alpha_n)\}_{t\ge 0}$. Then, $(H^n,X^n)  \Rightarrow (H,X)$ on $(\D_{\R}[0,\infty)  , \dM )\times (\D_{\R}[0,\infty)  ,  \rho)$ implies the weak convergence \eqref{eq:conv_avci_alternative} of the stochastic integrals, for $\rho \in \{\dJ,\dM\}$. Indeed, letting $\nu^n$ be any $H^n$-stopping time and choosing deterministic $\sigma_k^n:= (k-1)\alpha_n \wedge T$, we have
	$$ \left\{ \, \nu^n \, \in \, [\sigma_k^n, \, \sigma^n_{k+1}) \, \right\} \; \in \; \sigma(X^n_s \, : \, 0 \le s < (k \alpha_n \wedge T)-2\alpha_n)$$
	by the adaptedness of each $H^n$ to the delayed filtration. Clearly, the increments $X^n_r-X^n_s$ are independent of $\sigma(X^n_p \, : \, 0 \le p \le ((k-1) \alpha_n \wedge T)-\alpha_n)$ for all $\sigma_{k}^n \le s \le r \le \sigma^n_{k+1}+\delta$,
	which yields condition (a) of Theorem \ref{prop:4.36}, while (b) follows from the stationary increments and relative compactness of the $X^n$ in $M_1$.
\end{example}

Independence within assumption (a) of Theorem \ref{prop:4.36} should simply be seen as a guarantee for the process $H^n$, until just before time $\sigma^n_{k+1}$, not to depend on the behaviour of the process $X^n$ restarted at $\sigma^n_k$. In order to make Theorem \ref{prop:4.36} more easily applicable, we provide the following sufficient criterion for the condition (a).

\begin{lem}\ \label{lem:4.43} 
	Under the notation of Theorem \ref{prop:4.36}, if for every $n\ge 1$ and $k\ge 1$, the sets 
	$$ (H^n_{t_1},...,H^n_{t_\ell})^{-1}(A_1\times...\times  A_\ell)\; \cap \; \{ q_2 \, < \, \sigma^{n}_{k+1}\}\; \cap \; \{\sigma^n_k \, \le \, q_1\} $$
	with $\ell \ge 1$, $A_1,...,A_\ell \in \B(\R^d)$, $q_1,q_2 \in \Q$ and $t_1,...,t_\ell\le q_2$, are independent of 
	$$ \sigma \big(\{X^n_{\sigma^{n}_{k+1} \, + \, r} - X^n_{(\sigma^{n}_{k+1}-s)\vee \sigma^n_{k}} \; : \; s>0, \, 0\le r \le \tilde\delta \}\big) ,$$
	then condition (a) of Theorem \ref{prop:4.36} holds true.
\end{lem}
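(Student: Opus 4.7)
The plan is to prove the stronger property that the event $E := \{\sigma^n_k \le \nu^n < \sigma^n_{k+1}\}$ is in fact fully independent of the $\sigma$-algebra
\[
\mathcal{G}_{n,k} := \sigma\bigl(\,X^n_{\sigma^n_{k+1}+r} - X^n_{(\sigma^n_{k+1}-s)\vee \sigma^n_k} \,:\, s>0,\, 0\le r \le \tilde\delta\,\bigr).
\]
This yields condition (a) of Theorem \ref{prop:4.36}, since for every $0<\delta\le \tilde\delta$ the supremum appearing there is $\mathcal{G}_{n,k}$-measurable: telescoping the listed increments recovers all differences $X^n_u - X^n_v$ for $u,v \in [\sigma^n_k, \sigma^n_{k+1}+\tilde\delta]$, and the supremum is then realised as a countable lattice operation over rationals by the càdlàg right-continuity of $X^n$.

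The central step is a two-fold application of Dynkin's $\pi$-$\lambda$ theorem. Consider the family
\[
\mathcal{H} := \bigl\{\, F \cap \{\sigma^n_k \le q_1\} \cap \{q_2 < \sigma^n_{k+1}\} \;:\; q_1, q_2 \in \Q,\; F \in \mathcal{F}^{H^n}_{q_2}\,\bigr\},
\]
where $\mathcal{F}^{H^n}_{q_2} = \sigma(H^n_s : s\le q_2)$. This is a $\pi$-system: given two such sets, their intersection is again of the same form with $q_1'' = q_1 \wedge q_1'$, $q_2'' = q_2 \vee q_2'$ and $F'' = F\cap F' \in \mathcal{F}^{H^n}_{q_2''}$. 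Cylinder sets $(H^n_{t_1},\ldots,H^n_{t_\ell})^{-1}(A_1\times\ldots\times A_\ell)$ with $t_i\le q_2$ form a $\pi$-system generating $\mathcal{F}^{H^n}_{q_2}$, and the hypothesis asserts that each such cylinder intersected with $\{\sigma^n_k\le q_1\}\cap\{q_2<\sigma^n_{k+1}\}$ is independent of $\mathcal{G}_{n,k}$. Since the collection of events independent of $\mathcal{G}_{n,k}$ is a $\lambda$-system (closed under complements and countable disjoint unions by additivity), a first $\pi$-$\lambda$ pass upgrades the cylinder independence to all $F \in \mathcal{F}^{H^n}_{q_2}$, placing $\mathcal{H}$ inside that $\lambda$-system, and a second pass extends this to $\sigma(\mathcal{H})$.

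It remains to exhibit $E$ as an element of $\sigma(\mathcal{H})$. Using that $\nu^n$ is an $H^n$-stopping time, so that $\{\nu^n \le q\} \in \mathcal{F}^{H^n}_q$ for every $q\in\Q$, the elementary identities $\{\sigma^n_k \le \nu^n\} = \bigcap_{m\ge 1} \bigcup_{q_1\in\Q}\{\sigma^n_k\le q_1\}\cap\{\nu^n > q_1 - 1/m\}$ and $\{\nu^n <\sigma^n_{k+1}\} = \bigcup_{q_2\in\Q}\{\nu^n\le q_2\}\cap\{q_2<\sigma^n_{k+1}\}$ combine to give
\[
E \; = \; \bigcap_{m\ge 1}\;\bigcup_{q_1,q_2 \in \Q}\; \{\sigma^n_k \le q_1\} \cap \{q_1 - 1/m < \nu^n \le q_2\} \cap \{q_2 < \sigma^n_{k+1}\}.
\]
Each middle factor lies in $\mathcal{F}^{H^n}_{q_2}$, so every constituent set lies in $\mathcal{H}$, placing $E$ in $\sigma(\mathcal{H})$ and hence establishing the desired independence.

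I expect the main obstacle to be the rigorous verification that the supremum in condition (a) is $\mathcal{G}_{n,k}$-measurable. Specifically, one must handle the right-endpoint truncation $r\wedge T$ (using that $\sigma^n_{k+1}\le T$, so the truncation still yields increments of the listed form with a smaller parameter) together with the splitting asymmetry $s<\sigma^n_{k+1}\le r$ at the partition point. While this amounts to bookkeeping rather than a substantial conceptual difficulty, it is the only part of the argument that requires genuine care beyond the standard $\pi$-$\lambda$ machinery.
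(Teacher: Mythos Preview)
Your proof is correct and follows essentially the same route as the paper's. Both arguments show that $E=\{\nu^n\in[\sigma^n_k,\sigma^n_{k+1})\}$ lies in the $\sigma$-algebra generated by the $\pi$-system of hypothesis sets, and invoke a $\pi$-$\lambda$ argument to transfer independence. The paper uses the slightly different decomposition
\[
E=\bigcup_{q_2\in\Q}\bigcap_{q_1\in\Q}\Bigl[(\{\nu^n\in(q_1,q_2]\}\cap\{q_2<\sigma^n_{k+1}\})\cup(\{\nu^n\le q_2\}\cap\{\sigma^n_k\le q_1\}\cap\{q_2<\sigma^n_{k+1}\})\Bigr],
\]
and is terser about the $\pi$-$\lambda$ steps, but the substance is the same; your version is simply more explicit, and your flagging of the $\mathcal{G}_{n,k}$-measurability of the supremum (with the $r\wedge T$ truncation) is a point the paper leaves implicit.
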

\begin{proof}\ 
	We simply note that $\{  \nu^n  <  \sigma^{n}_{k}  \} = \bigcup_{q \in \Q} \, \{ \nu^n \le q\} \cap \{ q < \sigma^n_{k}\}$. Therefore,
	\begin{align*}
		& \{ \, \nu^n \; \in \; [\sigma^n_k, \, \sigma^n_{k+1}) \, \} \; = \; \{\, \nu^n < \sigma^n_{k+1}\} \, \setminus \, \{\, \nu^n < \sigma^n_{k}\} \\
		\; &= \; \union{q_2 \in \Q}{} \intersection{q_1 \in \Q} \, \Big[  \bigl( \{\, \nu^n \in (q_1, q_2]\} \cap \{q_2 < \sigma^n_{k+1}\}\bigr) \cup \,  \bigl( \{\, \nu^n \le q_2\} \cap  \{\sigma^n_{k}\le q_1\} \cap \{q_2 < \sigma^n_{k+1}\}\bigr) \Big]
	\end{align*}
	Clearly, the system of sets of the form $(H^n_{t_1},...,H^n_{t_\ell})^{-1}(A_1\times...\times A_\ell) \cap  \{ q_2 \, < \, \sigma^{n}_{k+1}\} \cap \{\sigma^n_k \, \le \, q_1\}$, $q_1,q_2 \in \Q$, $t_1,...,t_\ell \le q_2$ is stable under intersection and generates a $\sigma$-algebra containing the involved sets above for all $H^n$-stopping times $\nu^n$.
\end{proof}

\subsection{A particular class of integrators and integrands}\label{subsect:concrete_avci_alternative}

We end this section by considering a concrete class of integrators and corresponding integrands which have a structure that makes the above conditions easy to verify. It is motivated in part by the applications involving continuous-time random walks discussed in the next section.

\begin{prop} \ \label{prop:finex}
	Let $X^n_0$, $\xi^n_k$ and $\Delta \sigma^n_j> 0$ be independent random variables and define $\sigma^n_k:=\sum_{j=1}^k \Delta \sigma^n_j$, $\chi^n_k:=(\xi^n_k,\Delta \sigma^n_k)$, as well as pure jump processes 
	$$ X^n_t \; = \; X^n_0\; + \; \sum_{k=1}^\infty \; f_{n,k}(\chi^n_k,\chi^n_{(k-1)\vee 1},...,\chi^n_{(k-J)\vee 1})  \ind_{[\sigma^n_k\, ,\, \infty)}(t)$$
	where $J\ge 0$ and the $f_{n,k}$ are measurable mappings. Further, assume the increments of $X^n$ to be stationary from the jumps, i.e.~$X^n_{\sigma^n_k +\, \bullet}-X^n_{\sigma^n_k} \sim X^n_\bullet$, and $\lim_{\delta \downarrow 0} \limsup_{n\uparrow \infty} \, \Pro^n( |X^n|^*_\delta >\lambda) =0$ for all $\lambda>0$.
    Let $g_{n,i}$ be measurable and define adapted càdlàg pure jump processes by
	\begin{equation}\label{eq:Hn_fin_application} H^n_t \; := \; \sum_{i=1}^{\infty} \, g_{n,i}(t^n_i,X^n_{ \Xi^n(t^n_i)})  \ind_{[t^n_i, \, t^n_{i+1})}(t)
	\end{equation}
	with $t^n_i\uparrow \infty$ as $i \to \infty$ and $\Xi^n(t^n_i):= \operatorname{max}\{\sigma^n_{k-J} \, : \, k\ge 1 \text{ with } k-J\ge 1, \, \sigma^n_k\le t^n_i\}$. Set $\operatorname{max} \emptyset:=0$. Then, $X^n$ and $H^n$ satisfy the assumptions (a) and (b) of Theorem \ref{prop:4.36}. 
\end{prop}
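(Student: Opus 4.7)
The plan is to take the partition $\{[\sigma^n_k,\sigma^n_{k+1})\}_{k\ge 1}$ of Theorem~\ref{prop:4.36} to coincide with the inter-jump intervals of $X^n$ itself, i.e.\ to identify the $\sigma^n_k$ appearing in Theorem~\ref{prop:4.36} with the $k$-th jump time of $X^n$ supplied by the proposition. For condition (b), the central simplification is that, $X^n$ being pure jump, it is constant on each interval $[\sigma^n_k,\sigma^n_{k+1})$; the supremum in (b) therefore reduces to $\sup{r\in[\sigma^n_{k+1},\sigma^n_{k+1}+\delta]}|X^n_{r\wedge T}-X^n_{\sigma^n_k}|$, which the triangle inequality dominates by
\[
|\Delta X^n_{\sigma^n_{k+1}}| \; + \; \sup{r'\in[0,\delta]}|X^n_{\sigma^n_{k+1}+r'}-X^n_{\sigma^n_{k+1}}|.
\]
By the stationarity $X^n_{\sigma^n_k+\bullet}-X^n_{\sigma^n_k}\sim X^n_\bullet$, the second summand is distributed as $|X^n|^*_\delta$, whose tails vanish as $\delta\downarrow 0$ uniformly in large $n$ by the regularity hypothesis, while the first is distributed as $|f_{n,1}(\chi^n_1,\ldots,\chi^n_1)|$, which tends to zero in probability as $n\to\infty$ by the vanishing-jumps assumption. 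Since both distributional identities are independent of $k$, (b) follows.

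For condition (a), the plan is to invoke Lemma~\ref{lem:4.43}. The decisive structural observation, coming from the $J$-step delay built into $\Xi^n$, is the following: on $\{q_2<\sigma^n_{k+1}\}$, every $t_j\le q_2$ satisfies $\Xi^n(t^n_{i_j})\le \sigma^n_{k-J}$, and hence $H^n_{t_j}=g_{n,i_j}(t^n_{i_j},X^n_{\Xi^n(t^n_{i_j})})$ depends on the $\xi^n$-family only through $\xi^n_1,\ldots,\xi^n_{k-J}$. Conversely, using that $X^n$ is constant on $[\sigma^n_k,\sigma^n_{k+1})$, the sigma-algebra on the right of Lemma~\ref{lem:4.43} is generated by the post-$\sigma^n_{k+1}$ increments, whose constituent jump sizes $f_{n,k+1},f_{n,k+2},\ldots$ involve the $\xi^n$-family only through $\xi^n_{(k+1-J)\vee 1},\xi^n_{(k+2-J)\vee 1},\ldots$. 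The two $\xi$-index ranges being disjoint, conditioning on the full $\Delta\sigma^n$-family makes the two sides into functions of independent subfamilies of the $\xi^n$'s, and hence conditionally independent.

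The main delicacy is passing from this conditional independence to the unconditional independence that Lemma~\ref{lem:4.43} demands. Even with the partition at the jump times, the $\Delta\sigma^n$-variables with indices $(k+1-J)\vee 1,\ldots,k+1$ appear on both sides of the identity---on the left through $\{q_2<\sigma^n_{k+1}\}$ and the computation of the $\Xi^n(t^n_{i_j})$, on the right through the lookback in the sizes $f_{n,k'}$---and they enter as shared measurable parameters of two conditionally factorising probabilities. This is the point where the specific structure of the proposition is used: exploiting the stationarity of increments together with the joint independence of the $\xi^n$- and $\Delta\sigma^n$-families, one integrates the conditional factorisation against the $\Delta\sigma^n$-law to recover the unconditional identity required by Lemma~\ref{lem:4.43}, thereby delivering condition~(a).
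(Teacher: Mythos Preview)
Your treatment of (b) is fine and more explicit than the paper's one-line dismissal.

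For (a), both you and the paper invoke Lemma~\ref{lem:4.43} and correctly observe that on $\{q_2<\sigma^n_{k+1}\}$ the integrand $H^n$ depends on the $\xi$-family only through $\xi^n_1,\ldots,\xi^n_{k-J}$, disjoint from the indices $k-J+1,k-J+2,\ldots$ feeding the post-$\sigma^n_k$ increments. The paper, however, does not condition: it places the left-hand event directly in $\sigma(X^n_{0^-},\sigma^n_1,\ldots,\sigma^n_{k+1},\xi^n_1,\ldots,\xi^n_{k-J})$ and the increment process $X^n_{\sigma^n_{k+1}+\bullet}-X^n_{\sigma^n_k}$ in $\sigma(\Delta\sigma^n_{k+2},\Delta\sigma^n_{k+3},\ldots,\xi^n_{k-J+1},\ldots)$. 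These are generated by disjoint subfamilies of the mutually independent variables $\{X^n_{0^-}\}\cup\{\xi^n_m\}_m\cup\{\Delta\sigma^n_m\}_m$, so unconditional independence is immediate---no conditioning, no stationarity.

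Your conditioning detour, by contrast, has a genuine gap. Conditional independence given the $\Delta\sigma^n$-family yields only $\Pro(A\cap B)=\E\bigl[\Pro(A\mid\mathcal{G})\,\Pro(B\mid\mathcal{G})\bigr]$, and this equals $\Pro(A)\Pro(B)$ only if the two conditional probabilities are uncorrelated as functions of the conditioning variables---which there is no reason to expect when both sides depend on the shared block $\Delta\sigma^n_{(k+1-J)\vee 1},\ldots,\Delta\sigma^n_{k+1}$. Stationarity of increments is a statement about unconditional laws and does not deliver this decorrelation; your appeal to it is too vague to close the gap. Either the overlap you flag is not actually present (which is the paper's position, and is certainly the case in the motivating CTRW applications where the $f_{n,k}$ depend only on the $\xi$-components of $\chi$), in which case the conditioning is unnecessary, or it is present, in which case conditioning plus integrating out cannot repair it.
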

\begin{proof} \ Since (b) follows in a straightforward way from the stationarity and the vanishing of $X^n$ around zero in probability, we only need to verify (a) of Theorem \ref{prop:4.36}. For this, we will apply Lemma \ref{lem:4.43}. Fix $n\ge 1$ and $k\ge 1$, and note that it suffices to show that for any $ \ell \ge 1$ such that $t^n_{\ell},q_1\le q_2$ and $A_1,...,A_\ell \in \B(\R^d)$, we have \\[-2ex]
	$$ \bigcap_{i=1}^\ell \; \big\{ g_{n,i}(t^n_{i},X^n_{\Xi^n(t^n_{i})}) \in A_i\big\} \; \cap \; \{\sigma^n_{k+1}>q_2\} \; \cap \; \{\sigma^n_k\le q_1\} $$
	is independent of  $X^n_{\sigma^n_{k+1} \, + \, \bullet} - X^n_{\sigma^n_k}$. Now, note that on $\{\sigma^n_{k+1}>q_2\}$ it holds $\Xi^n(t_i^n)\in \{0, \sigma^n_1,..., \sigma^n_{k-J}\}$ for all $i=1,...,\ell$ since $t_i^n \le q_2 < \sigma^n_{k+1}$. We can write
	\begin{align*}
		& \big\{ g_{n,i}(t^n_{i},X^n_{\Xi^n(t^n_{i})}) \in A_i\big\} \; = \; \union{\ell=0}{k} \; \Big( \, \big\{ \, \sigma_\ell^n \le t^n_i < \sigma_{\ell+1}^n \, \big\} \; \cap \; \big\{ \, g_{n,i}(t^n_i, X^n_{\sigma^n_{\ell-J}}) \in A_i \, \big\} \, \Big)  
	\end{align*}
	on $\{\sigma^n_{k+1}>q_2\}$, where we have defined $\sigma^n_{j}:=0$ for all $j\le 0$. The involved sets, however, are measurable sets with respect to the $\sigma$-algebra
	$$ \sigma \big( \, X^n_{0^{-}},\, \sigma^n_1,...,\sigma^n_{k+1}, \, \xi^n_{1},..., \xi^n_{k-J}\, \big)$$
	while the process $X^n_{\sigma^n_{k+1} \, + \, \bullet} - X^n_{\sigma^n_k}$ is measurable with respect to $\sigma( \, \Delta \sigma^n_{k+1+j}, \, \xi^n_{k-J+j} :  j\ge 1)$. Hence, we obtain the desired independence.
\end{proof}

\section{Applications driven by continuous-time random walks}\label{sect:CTRW_applications}

The notion of continuous-time random walks (CTRWs) was introduced by Montroll \& Weiss  \cite{Montroll_Weiss} for the mathematical study of anomalous diffusion. It is a type of renewal-reward process whose precise definition is given in Section \ref{subsect:intro_CTRW} below. Related to the present work, \cite{scalas} studies functional weak convergence of stochastic integrals driven by CTRWs. This was motivated by the analysis of damped harmonic oscillators subject to stable L\'evy noise in \cite{Sokolev_Ebeling_Dybiec}. Indeed, stochastic integrals against stable processes are often considered for physical models with heavy tailed behaviour due to stable limit theorems \cite{Eliazar_Klafter} (see also \cite{Kulik_Pavlyukevich} for related work on Langevin equations with stable L\'evy noise). With a view towards the analysis of more general anomalous diffusion models, \cite[Ch.~6]{hahn} highlights the limitations of what is known about weak continuity results on Skorokhod space for stochastic integrals against general CTRW integrators (see in particular \cite[Rem.~6.4]{hahn}). We address this in the next subsections.

\subsection{Continuous-time random walk: definitions and scaling limits}\label{subsect:intro_CTRW}

Let $\{ \theta_k : -\infty < k < \infty \}$ be a sequence of i.i.d.~$\R^d$-valued random variables on a common probability space $(\Omega, \mathcal{F}, \Pro)$ that are in the normal domain of attraction of a non-degenerate $\alpha$-stable random variable $\tilde{\theta}$ with $0< \alpha< 2$ (for $\alpha=2$, $\tilde\theta$ is non-degenerate Gaussian). Given non-negative constants $c_0,c_1,...\ge 0$ such that $\sum_{j=0}^\infty c_j^\rho < \infty$ for some $0<\rho<\alpha$, we define\\[-1ex]
\begin{equation}\label{eq:linear_process_innovations} \zeta_i \; := \; \sum_{j=0}^\infty \; c_j \theta_{i-j}, \qquad i \ge 1.
\end{equation}
For $1<\alpha\le 2$, assume further that either $c_j=0$ for all but finitely many $j\ge 0$ or the sequence $(c_j)_{j\ge 0}$ is monotone and $\sum_{j=0}^\infty c_j^\rho <\infty$ for some $\rho<1$. Now let $J_1, J_2,...$ be positive i.i.d.~random variables in the normal domain of attraction of a $\beta$-stable random variable with $\beta \in (0,1)$, defined on the same probability space as the $\theta_k$.

Let $L(k):= J_1+\hdots+J_k$ give the time of the $k^{\text{th}}$ jump (with $L(0)\equiv 0$). Then $ N(t) :=  \max{} \{ k \ge 0  : L(k) \le t \}$ is the number of jumps by time $t$, and we refer to each process \\[-2ex]
\begin{align} X^n_t \; := \;  \frac{1}{n^{\frac \beta \alpha}} \; \sum_{k=1}^{N(nt)} \, \zeta_k \label{defi:CTRW}
\end{align}
as a (rescaled) \emph{correlated} CTRW, in view of the linear correlation structure \eqref{eq:linear_process_innovations}. Taking $c_0=1$ and $c_j=0$ for $j\geq1$ yields a standard (uncorrelated) CTRW. The CTRWs in \eqref{defi:CTRW} are further said to be \emph{uncoupled} if the sequences $(J_i)_{i\geq 1}$ and $(\zeta_k)_{k\geq 1}$ are independent. In this setting, it is shown in \cite{meerschaert2} that, for $0<\alpha\le 2$, there is weak convergence
\begin{equation}\label{eq:CTRW_$M_1$_conv}
	\; X^n \; \; \stackrel{\text{$M_1$}}{\convw{n\to \infty}} \; \; X=\Bigl(\,\sum_{j=0}^\infty c_j\Bigr)Z_{D^{-1}},
\end{equation}
where $Z$ is an $\alpha$-stable Lévy process ($0<\alpha<2$) or a Brownian motion ($\alpha=2$) with $Z_1\sim \tilde \theta$, and 
$ D^{-1}(t)  := \operatorname{inf}  \{  s\ge 0 \, : \, D(s)  >  t \}$ is the generalised inverse of a $\beta$-stable subordinator $D$.

In the uncorrelated case ($c_0>0$ and $c_j=0$ for $j\geq1$), this was first studied by \cite{Becker-Kern,meerschaert}, and \cite{henry_straka} later showed that \eqref{eq:CTRW_$M_1$_conv} in fact holds in the $J_1$ topology. As observed in \cite{meerschaert2}, \eqref{eq:CTRW_$M_1$_conv} \emph{cannot} be improved to $J_1$ convergence when not only $c_0>0$ but also $c_j>0$ for some $j\ge 1$.

\subsection{Weak continuity results for  continuous-time random walks}\label{subsect:weak_cont_CTRW}

Consider a sequence of uncorrelated, uncoupled CTRWs $X^n$ on the real line. Assuming also the $\theta_k$ are symmetric and $\alpha$-stable, the first main result of \cite{scalas} shows that, for $\alpha \in (1,2]$, $\int_0^\bullet f(s)\diff X^n_s$ converges weakly to $\int_0^\bullet f(s)\diff X_s$ on $(\D_{\R}[0,\infty),\dM)$, where $f$ is a given bounded continuous function \cite[Thm.~4.15]{scalas}. The second main result generalises this statement to $\alpha \in(0,2]$ and the general assumptions of Section \ref{subsect:intro_CTRW} \cite[Thm.~4.20]{scalas}. Unfortunately, a crucial step \cite[Eq.~(62)]{scalas} in the proof of the latter can be seen to be false for $\alpha \in [1,2]$ and the justification is lacking for $\alpha \in (0,1)$. Nevertheless, we shall see that a much more general statement is indeed true.

In the special case $\alpha =2$ (with centered $\theta_k$), where $X$ is a subordinated Brownian motion, \cite[Thm.~6.13]{hahn} confirms that $\int_0^\bullet H^n_{s-}\diff X^n_s$ converges weakly to $\int_0^\bullet H_s \diff X_s$ on $(\D_{\R}[0,\infty),\dJ)$ whenever $(H^n,X^n) \Rightarrow (H,X)$ on $(\D_{\R^2}[0,\infty),\dJ)$. The possible  extension to general integrands and general CTRWs, including the coupled case, is discussed in \cite{scalas, hahn}.

A \emph{coupled} CTRW $X^n$ refers to the case where $(\zeta_k,J_k)_{k\geq 1}$ is an i.i.d.~sequence, but, for each $k\geq 1$, $\zeta_k$ and $J_k$ may be dependent. In the uncorrelated setting, \cite{henry_straka} has shown that
\begin{equation}\label{eq:coupled_CTRW_$J_1$_conv}
	\; X^n \; \; \stackrel{\text{$J_1$}}{\convw{n\to \infty}} \; \; \bigl((Z^-)_{D^{-1}}\bigr)^+,
\end{equation}
where $x^-(t):=x(t-)$ and $x^+(t):=x(t+)$. If the CTRWs are in fact uncoupled, then $Z$ and $D$ are independent L\'evy processes, so they have no common jumps, a.s., and hence the limit simplifies to $Z_{D^{-1}}$ in agreement with \eqref{eq:CTRW_$M_1$_conv}. We can make the following key observation, the proof of which we have deferred to \cite{andreasfabrice} for brevity.

 \begin{theorem}
 [Good decompositions]\label{thm:CTRW_has_GD}
	Let $X^n$ be uncorrelated, possibly coupled, CTRWs, as given by \eqref{defi:CTRW} with $c_0>0$ and $c_j=0$ for all $j\geq 1$. Then $(X^n)_{n\ge 1}$ has good decompositions
\eqref{eq:Mn_An_condition} for its natural filtration. When $\alpha \in (0,1)$, this holds also for correlated CTRWs.
\end{theorem}

With this result, our framework applies. In the uncorrelated case, the CTRWs are $J_1$ convergent \eqref{eq:coupled_CTRW_$J_1$_conv}, so we can in particular conclude that $(X^n,\int_0^\bullet H^n_{s-}\diff X^n_s)$ converges weakly to $(X,\int_0^\bullet H_{s-} \diff X_s)$ on $(\D_{\R^{2d}}[0,\infty),\dJ)$ whenever the $H^n$ satisfy \eqref{it:alt_int_crit1}--\eqref{it:alt_int_crit3} and \eqref{eq:oscillcond}, by Proposition \ref{remark:weaker_integrnd_conv}. This of course holds if $(H^n,X^n) \Rightarrow (H,X)$ on $(\D_{\R^{2d}}[0,\infty),\dJ)$, by Proposition \ref{prop:3.3}. Thus, we refine and vastly generalise the results from  \cite{scalas, hahn} discussed above.

In the correlated case with $\alpha \in (0,1)$, \eqref{eq:CTRW_$M_1$_conv} holds for the $M_1$ topology only. The second part of Theorem \ref{thm:CTRW_has_GD} then tells us that $(X^n,\int_0^\bullet H^n_{s-}\diff X^n_s)$ converges weakly to $(X,\int_0^\bullet H_{s-} \diff X_s)$ on $(\D_{\R^{2d}}[0,\infty),\dM)$ if the $H^n$ satisfy \eqref{it:alt_int_crit1}--\eqref{it:alt_int_crit3} and \eqref{eq:oscillcond}, again by Proposition \ref{remark:weaker_integrnd_conv}. This holds, e.g., if $(H^n,X^n)\Rightarrow (H,X)$ on $(\D_{\R^{d}}[0,\infty),\dM)\times(\D_{\R^{d}}[0,\infty),\dM) $ with $H$ and $X$ having no common jump discontinuities a.s., by (ii) of Proposition \ref{prop:3.3}.

Instead of \eqref{eq:oscillcond}, in the previous paragraph one can alternatively seek to verify (a) and (b) of Theorem \ref{prop:4.36}. Correlated CTRWs $X^n$ with a finite correlation structure (i.e., there exists $J\geq 1$ such that $c_j=0$ for $j\ge J$) meet the requirements of Proposition \ref{prop:finex}, so in this case (a) and (b) of Theorem \ref{prop:4.36} are satisfied for integrands of the form \eqref{eq:Hn_fin_application}.

Finally, we can observe the following, which links back to the discussion in Section \ref{sect:$M_1$_martingales_and_counterex}.

\begin{prop}[Localised uniform integrability] \label{prop:CTRW_are_localised_UI}
    Let $X^n$ be uncorrelated, possibly coupled, CTRWs, as given by \eqref{defi:CTRW} with $c_0>0$ and $c_j=0$ for all $j\geq 1$. For $\alpha \in (1,2]$, the $X^n$ are martingales and $(X_n)_{n\geq 1}$ is localised uniformly integrable in the sense of Definition \ref{defn:local_UI}.
\end{prop}

If, e.g., we only had access to earlier results on scaling limits in the $M_1$ topology, this tells us that Corollary \ref{cor:$M_1$_conv_impl_$J_1$_conv_martingales} applies for $\alpha \in (1,2]$ and so we would obtain $J_1$ convergence.

\subsection{Explosion of integrals for correlated continuous-time random walks}\label{sect:counter}

An early version of \eqref{eq:CTRW_$M_1$_conv} was first proved in \cite{avram} for \emph{moving averages} $X^n$. These can be seen as special cases of correlated CTRWs with $J_i \equiv 1$ for $i\geq 1$. \cite{avram} (and \cite[Thm.~4.7.1]{whitt}) showed that, if $\mathbb{E}[\theta_0] = 0$ when $1 < \alpha < 2$, or if the law of $\theta_0$ is symmetric when $\alpha=1$, then
\begin{equation}\label{eq:mov_av_scaling_limit}
X^n = \frac{1}{n^{\frac 1 \alpha}} \; \sum_{k=1}^{
\lfloor n \bullet \rfloor } \, \zeta_k \; \; \stackrel{\text{$M_1$}}{\convw{n\to \infty}} \; \; \Bigl(\,\sum_{j=0}^\infty c_j\Bigr)Z.
\end{equation}
For $\alpha \in (0,2)$, this cannot be strengthened to $J_1$ if both $c_0>0$ and $c_j>0$ for some $j\geq 1$, while the weak convergence is in fact in $J_1$ when $c_j=0$ for all $j\geq 1$.

In \cite{jakubowski}, Jakubowski singled out the strict $M_1$ convergence in \eqref{eq:mov_av_scaling_limit} to motivate that, when studying the weak convergence of stochastic integrals, \emph{`for some naturally arising integrators the requirement of convergence in the usual Skorohod topology may be too strong'} (see \cite[Ex.~2]{jakubowski}). Whether or not weak continuity results for stochastic integrals actually apply to these integrators, however, is not discussed in that work. In fact, we are not aware of any later works that have addressed this point. The next example shows that, for strictly $M_1$ convergent moving averages and correlated CTRWs, we do \emph{not} have weak convergence in general.

\begin{prop}[Exploding integrals] \label{counterexample:3.4}
	Consider $\alpha \in (1,2)$, and let $X^n$ be given by \eqref{eq:mov_av_scaling_limit} with $\zeta_k = c_0\theta_k + c_1 \theta_{k-1}$, where $c_0, c_1>0$. One can choose centered $\theta_k$ in such a way that there exists a sequence of pure jump càdlàg integrands $H^n$ adapted to the natural filtration generated by the $X^n$ and converging uniformly to zero, almost surely, for which all finite-dimensional distributions of the stochastic integrals $\int_0^\bullet H^n_{s-} \diff X^n_s$ explode as $n\rightarrow \infty$.
\end{prop}

From this result, we can also conclude that: for $\alpha \in (1,2)$, correlated CTRWs in general do \emph{not} admit good decompositions \eqref{eq:Mn_An_condition}. Otherwise, we would have a contradiction to the conclusion of Theorem \ref{thm:3.19}, by virtue of \eqref{eq:mov_av_scaling_limit} and condition (ii) of Proposition \ref{prop:3.3}. Nevertheless, we note that the weak convergence framework introduced in this paper can be extended to yield $M_1$ continuity of stochastic integrals driven by correlated CTRWs under additional assumptions on the integrands; we pursue these extensions in \cite{andreasfabrice}.

\section{Proofs of the results from Section \ref{sect:stoch_int_conv}}\label{sect:proofs_sect3} 
We provide the proofs in the order in which the results were presented.

\subsection{The key consequence of AVCI and its alternative}
\label{subsec:Conseq_of_AVCO}

The central point of \eqref{eq:oscillcond} and the alternative condition given in Theorem \ref{prop:4.36} is the following: it allows us to start from specific stochastic integrals with respect to integrands discretised along path-dependent stopping times and then approximate these by stochastic integrals with respect to integrands discretised along deterministic times. This is a pivotal ingredient in the proof of Theorem \ref{thm:3.19} and is made precise by Proposition \ref{prop:3.22} below, after some definitions.

\begin{defi}\ \label{def:3.20} 
	For a given partition $\rho=\{0=s_0<s_1<...\}$ with $s_n \to \infty$ as $n\to \infty$, we denote by $I_{\rho}: \D_{\R^d}[0, \infty) \to \D_{\R^d}[0,\infty)$ the discretisation mapping defined by 
	$$ I_\rho(z) \; = \; \sum_{i=0}^{\infty} \, z(s_i) \ind_{[s_i,s_{i+1})}.$$
\end{defi}

\begin{defi}\ \label{def:3.21}
	Let $\rho^m=\{0=t^m_0< t^m_1 < ...\}$ be a sequence of deterministic partitions of $[0,\infty)$ with $t^m_i\to \infty$ as $i\to \infty$ such that $\rho^m \subseteq \rho^{m+1}$ and $|\rho^m|:= \operatorname{sup}_{i\ge 1} \,  |t^m_{i+1}-t^m_i| \, \to \, 0$ as $m\to \infty$. For all $m\ge 1$, $\varepsilon>0$, $z \in \D_{\R^d}[0,\infty),$ we define a partition $\rho^{m,\varepsilon}(z)$ given by the times  
	\begin{align*}
		\tau^{m,\varepsilon}_{k,0}(z)  :=  t_{k-1}^m \quad \text{and} \quad 
		\tau^{m,\varepsilon}_{k,i+1}(z)  := \inf{} \{ s \ge \tau^{m,\varepsilon}_{k,i}(z) \; : \; |z(s) - z(\tau^{m,\varepsilon}_{k,i}(z))| \; \ge \; \varepsilon \} \; \wedge t_k^m
	\end{align*}
	for all $k\ge 1$. As $z$ is càdlàg, for all $T>0$, the above partition restricted to $[0,T]$ is finite.
\end{defi}

It is clear from the definition of the $\tau^{m,\varepsilon}_{k,i}$ that we have $|I_{\rho^{m,\varepsilon}(z)}(z) - z |^*_T \le \varepsilon$
for all $z \in \D_{\R^d}[0,\infty)$, $T>0$, $\varepsilon>0$ and $m\ge 1$.

\begin{prop}[Approximating simple integrals by better simple integrals] \label{prop:3.22} 
	Let $(X^n)_{n\ge 1}$ and $(H^n)_{n\ge 1}$ be sequences of càdlàg processes on the same filtered probability spaces $(\Omega^n, \mathcal{F}^n, \F^n, \Pro^n)$ and fix $T>0$. Further, for all $\varepsilon>0$, let $N^T_\varepsilon(H^n)$, $|H^n|^*_T$ and $|X^n|^*_T$ be tight in $\R$, where $N^T_\varepsilon$ is defined as in \eqref{eq:maxnumosc}. If either of the following two holds true
	\begin{enumerate}
		\item $(H^n, X^n)$ satisfy (\ref{eq:oscillcond});
		\item $(H^n, X^n)$ satisfy the assumptions of Theorem \ref{prop:4.36},
	\end{enumerate}
	then, with the notation from Definitions \ref{def:3.20} and \ref{def:3.21}, it holds for all $\gamma,\varepsilon>0$ that
	\begin{align}
		\lim\limits_{m \to \infty} \limsup\limits_{n\to \infty} \,  \Pro^n \Bigl(  \Bigl| \int_0^\bullet  I_{\rho^m}(H^n)_{s-}  \diff X^n_s  -  \int_0^\bullet  I_{\rho^{m,\varepsilon}(H^n)}(H^n)_{s-}  \diff X^n_s \Bigr|_T^* \ge  \gamma \Bigr)  =  0 \label{eq:approx_simpleint_by_better_simpleint}
	\end{align}
\end{prop}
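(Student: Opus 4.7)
The plan is to make the difference $D^n_t := \int_0^t [I_{\rho^{m,\varepsilon}(H^n)}(H^n)_{s-}-I_{\rho^m}(H^n)_{s-}]\,\diff X^n_s$ explicit and then exploit that each nonzero contribution pairs a large $H^n$-increment with a short $X^n$-increment. Within each block $[t^m_{k-1},t^m_k)$ the two discretisations of $H^n$ differ only through the refinement subpartition $\{\tau^{m,\varepsilon}_{k,0}<\tau^{m,\varepsilon}_{k,1}<\cdots\}$, and a discrete summation by parts followed by a telescoping rearrangement yields
\begin{equation*}
D^n_t \;=\; \sum_k \, \sum_{j\ge 1}\,\bigl[H^n_{\tau^{m,\varepsilon}_{k,j}}-H^n_{\tau^{m,\varepsilon}_{k,j-1}}\bigr]\,\bigl(X^n_{r^{n,m}_{k,j}(t)}-X^n_{\tau^{m,\varepsilon}_{k,j}\wedge t}\bigr),
\end{equation*}
where $r^{n,m}_{k,j}(t)=t^m_k\wedge t$ on interior blocks and equals $t$ on the terminal one. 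Only the \emph{true} refinement indices contribute, i.e.~those with $|H^n_{\tau^{m,\varepsilon}_{k,j}}-H^n_{\tau^{m,\varepsilon}_{k,j-1}}|\ge\varepsilon$; in each such term the three times $\tau^{m,\varepsilon}_{k,j-1}<\tau^{m,\varepsilon}_{k,j}\le r^{n,m}_{k,j}(t)$ lie within a common window of length at most $|\rho^m|$.

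With this representation in hand, fix a target $\gamma_0>0$ and tolerance $\eta>0$. Using the tightness of $|H^n|^*_T$ and $N^T_\varepsilon(H^n)$, choose $K,L$ with $\Pro^n(|H^n|^*_T>K)+\Pro^n(N^T_\varepsilon(H^n)>L)<\eta/2$ uniformly in $n$. On the complementary event $D^n_t$ is a sum of at most $L$ nonzero couplings, each bounded by $2K\,|X^n_{r^{n,m}_{k,j}(t)}-X^n_{\tau^{m,\varepsilon}_{k,j}\wedge t}|$. Under \eqref{eq:oscillcond} (case~1), take $\gamma\in(0,\,\varepsilon\wedge\gamma_0/(2KL))$, obtain $\delta>0$ from \eqref{eq:oscillcond} with $\limsup_n\Pro^n(\hat w^T_\delta(H^n,X^n)>\gamma)<\eta/2$, and restrict to $m$ with $|\rho^m|\le\delta$. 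Since the $H^n$-increment in each triple $(\tau^{m,\varepsilon}_{k,j-1},\tau^{m,\varepsilon}_{k,j},r^{n,m}_{k,j}(t))$ exceeds $\gamma$, \eqref{eq:oscillcond} forces the corresponding $X^n$-increment below $\gamma$, giving $|D^n|^*_T\le 2KL\gamma<\gamma_0$ on an event of $\Pro^n$-probability at least $1-\eta$. This essentially recovers Jakubowski's argument for case~1.

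For case~2 the \eqref{eq:oscillcond} step is replaced by the interplay of conditions~(a) and~(b) from Theorem~\ref{prop:4.36}. Each $\tau^{m,\varepsilon}_{k,j-1}$ is an $H^n$-stopping time, so~(a) provides a partition $\{\sigma^n_p\}$ for which $\{\tau^{m,\varepsilon}_{k,j-1}\in[\sigma^n_p,\sigma^n_{p+1})\}$ is ${\le}$-independent of the local oscillation $\Xi^n_{p,\delta}:=\sup_{\sigma^n_p\le s<\sigma^n_{p+1}\le r\le\sigma^n_{p+1}+\delta}|X^n_{r\wedge T}-X^n_s|$, whose tails condition~(b) controls uniformly in $p,n$ as $\delta\to 0$. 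Arranging for the mesh of $\{\sigma^n_p\}$ to be at most $|\rho^m|$, there exists $p$ with $\tau^{m,\varepsilon}_{k,j-1}\le\sigma^n_{p+1}\le r^{n,m}_{k,j}(t)$, so the corresponding $X^n$-increment is bounded by $2\Xi^n_{p,\delta}$. Combining this with the ${\le}$-independence and a union bound over the at most $L$ relevant indices $(k,j)$ then yields $\Pro^n(|D^n|^*_T\ge\gamma_0)<\eta$ for $m$, $n$ sufficiently large and $\delta$ small.

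The delicate part is the coordination in case~2 of the three partitions $\rho^m$, $\rho^{m,\varepsilon}(H^n)$ and $\{\sigma^n_p\}$: condition~(a) is stated for a single $H^n$-stopping-time sequence, whereas we need its conclusion to apply simultaneously at up to $L$ distinct refinement points, each of which is itself random. I expect this to be overcome either by applying~(a) iteratively (once per refinement index) and intersecting the resulting ${\le}$-independence inequalities, or by enriching $\{\sigma^n_p\}$ with the deterministic block endpoints $\{t^m_k\}$ so that a single partition is fine enough to capture all refinement times within their respective blocks, while still satisfying~(a) and~(b).
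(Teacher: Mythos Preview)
Your overall strategy is correct and close to the paper's, but the two proofs differ in the representation of the difference and, consequently, in how case~2 is executed. You perform an Abel summation to obtain
\[
D^n_t=\sum_{k}\sum_{j\ge 1}\bigl[H^n_{\tau^{m,\varepsilon}_{k,j}}-H^n_{\tau^{m,\varepsilon}_{k,j-1}}\bigr]\bigl(X^n_{r^{n,m}_{k,j}(t)}-X^n_{\tau^{m,\varepsilon}_{k,j}\wedge t}\bigr),
\]
whereas the paper works with the direct form
\[
\sum_{i}\sum_{j\ge 0}\bigl(H^n_{\tau^{m,\varepsilon}_{i,0}}-H^n_{\tau^{m,\varepsilon}_{i,j}}\bigr)\bigl(X^n_{\tau^{m,\varepsilon}_{i,j+1}\wedge t}-X^n_{\tau^{m,\varepsilon}_{i,j}\wedge t}\bigr).
\]
For case~1 your representation is arguably more natural, since each nonzero term already exhibits a triple $(\tau^{m,\varepsilon}_{k,j-1},\tau^{m,\varepsilon}_{k,j},r^{n,m}_{k,j}(t))$ of width at most $|\rho^m|$ with $H^n$-increment $\ge\varepsilon$; this is precisely Jakubowski's argument, which the paper simply cites.

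For case~2 the paper's route is cleaner and avoids the complications you flag. It bounds the $H$-factor crudely by $2|H^n|^*_T$, lists the (at most $K_\eta$) refinement times as $H^n$-stopping times $\nu^n_1,\ldots,\nu^n_{K_\eta}$, and for \emph{each} $k$ separately applies (a) to the sequence $(\nu^n_k)_{n\ge 1}$ to get its own partition $\{\sigma^n_i\}$; the inclusion $\{\nu^n_k\in[\sigma^n_i,\sigma^n_{i+1})\}\cap\{\sup_{0<z\le|\rho^m|}|X^n_{(\nu^n_k+z)\wedge T}-X^n_{\nu^n_k}|\ge\lambda\}\subseteq\{\nu^n_k\in[\sigma^n_i,\sigma^n_{i+1})\}\cap\{\Xi^n_{i,|\rho^m|}\ge\lambda\}$ (up to a harmless factor $2$ from triangulating through $\sigma^n_{i+1}$ when $\nu^n_k+z<\sigma^n_{i+1}$) then feeds directly into the ${\le}$-independence and (b). No mesh control on $\{\sigma^n_i\}$ is ever used or needed, and no simultaneous coordination across $k$ is required because $K_\eta$ is fixed and one simply sums the $K_\eta$ bounds. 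This is exactly your first proposed resolution, so your worry in the last paragraph dissolves.

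Two points in your case~2 sketch need correction. First, ``arranging for the mesh of $\{\sigma^n_p\}$ to be at most $|\rho^m|$'' is neither possible (the partition is handed to you by the hypothesis) nor necessary. Second, you apply (a) to $\tau^{m,\varepsilon}_{k,j-1}$ rather than to $\tau^{m,\varepsilon}_{k,j}$, the starting point of your $X$-increment; this creates a spurious case where $\tau^{m,\varepsilon}_{k,j}$ may already have crossed into $[\sigma^n_{p+1},\sigma^n_{p+1}+|\rho^m|]$, and $\Xi^n_{p,\delta}$ does not control increments with both endpoints in that range. Applying (a) to $\tau^{m,\varepsilon}_{k,j}$ (equivalently, to the ordered $\nu^n_k$ as the paper does) removes this issue entirely.
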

\begin{proof}\ 
	We first show the claim with the assumptions of Theorem \ref{prop:4.36}. Fixing $T>0$ and $\gamma, \varepsilon>0$,
	\begin{align*}
		&\int_0^t \; I_{\rho^m}(H^n)_{s-} \; \diff X^n_s \; - \; \int_0^t \; I_{\rho^{m,\varepsilon}(H^n)}(H^n)_{s-} \; \diff X^n_{s} \\
		& \qquad = \; \sum_{i=1}^{\infty} \, H_{t^m_{i-1}}^n \, \left( X^n_{t^m_{i}\wedge t} -X^n_{t^m_{i-1}\wedge t}\right) \; - \; \sum_{i=1}^{\infty}\sum_{j=0}^{\infty}\, H_{\tau^{m,\varepsilon}_{i,j}}^n \, \left( X^n_{\tau^{m,\varepsilon}_{i,j+1}\wedge t} -X^n_{\tau^{m,\varepsilon}_{i,j}\wedge t}\right) \\
		& \qquad = \;\sum_{i=1}^{\infty}\sum_{j=0}^{\infty} \, \left(H_{\tau^{m,\varepsilon}_{i,0}}^n - H_{\tau^{m,\varepsilon}_{i,j}}^n \right)\, \left( X^n_{\tau^{m,\varepsilon}_{i,j+1}\wedge t} -X^n_{\tau^{m,\varepsilon}_{i,j}\wedge t}\right) \; .
	\end{align*}
	By definition of the $\tau^{m,\varepsilon}_{i,j}:=\tau^{m,\varepsilon}_{i,j}(H^n)$, we have $|\tau^{m,\varepsilon}_{i,j+1} - \tau^{m,\varepsilon}_{i,j}| \le |\rho^m|$ for all $i,j$. Therefore,
	\begin{align*}
		&\Gamma_{m,n} \;:= \;\Pro^n \left( \, \left| \, \int_0^\bullet \; I_{\rho^m}(H^n)_{s-} \; \diff X^n_s \; - \; \int_0^\bullet \; I_{\rho^{m,\varepsilon}(H^n)}(H^n)_{s-} \; \diff X^n_s\, \right|_T^* \; \ge \; \gamma\,  \right) \\
		& \qquad \le \; \Pro^n \Big( \, \left| H^n\right|_T^*\; \,\sum_{i,\, j \, \ge \, 1 }\; \; \, \sup{0\, < \, z \, \le \, |\rho^m|} \; \left| X^n_{(\tau^{m,\varepsilon}_{i,j}+z)\wedge T} -X^n_{\tau^{m,\varepsilon}_{i,j}\wedge T}\right| \; \ind_{\{\tau^{m,\varepsilon}_{i,j+1} \neq \; \, \tau^{m,\varepsilon}_{i,j}\}}\; \ge \; \frac{\gamma}{2}\,  \Big) \; . 
	\end{align*}
	The number of summands in the above sum---by definition of the $\tau^{m,\varepsilon}_{i,j}$---is bounded by $N^T_\varepsilon(H^n)$ as defined in (\ref{eq:maxnumosc}). Now, let $\eta>0$. By tightness, there exists $K_\eta>0$ such that $\operatorname{sup}_{n\ge 1} \Pro^n ( \left| H^n\right|_T^*  \ge  K_\eta ) \le \eta$ \, and \, $\operatorname{sup}_{n\ge 1}\Pro^n ( N^T_\varepsilon(H^n) \ge K_\eta)\le \eta$. Let us denote $\nu^{n}_1:= \tau^{m,\varepsilon}_{1,1}$ and $\nu^{n}_{k}:= \operatorname{min}\{ \tau^{m,\varepsilon}_{i,j} \, : \, \tau^{m,\varepsilon}_{i,j} > \nu^{n}_{k-1}, \, i\ge 1, \, j\ge 1\}$ for all $k\ge 2$. Then, we obtain
	\begin{align}
		\Gamma_{m,n} \; \le \; 2\eta \; + \; \sum_{k=1}^{K_\eta} \; \Pro^n \biggl( \, \sup{0\, < \, z \, \le \, |\rho^m|} \; \left| X^n_{(\nu^{n}_k+z)\wedge T} -X^n_{\nu^{n}_k \wedge T}\right| \; \ge \; \frac{\gamma}{4\, K_\eta} \,  \biggr) \; . \label{eq:equation_in_Prop_3.22}
	\end{align}
	Hence, it suffices to show that each of the summands on the right side converges conveniently to zero. Since the $\nu^{n}_k$ are $H^n$-stopping times, using assumption (a), we obtain for each $k=1,...,K_\eta$,
	\begin{align*}
		&\Pro^n \Bigl( \, \sup{0\, < \, z \, \le \, |\rho^m|} \; \left| X^n_{(\nu^{n}_k+z)\wedge T} -X^n_{\nu^{n}_k \wedge T}\right| \; \ge \; \frac{\gamma}{4\, K_\eta} \,  \Bigr) \\
		& \; \le \;\sum_{i=1}^\infty \; \Pro^n \left( \, \nu_k^{n} \; \in \; [ \sigma^n_i \, , \, \sigma^n_{i+1} ) \, \right ) \; \Pro^n \biggl( \, \sup{\sigma^n_{i}\, \le \, z\, < \, \sigma^n_{i+1} \, \le \, \tilde{z} \, \le \, \sigma^n_{i+1} \,  + \, |\rho^m|}  \left| X^n_{\tilde{z} \wedge T} -X^n_{z }\right| \; \ge \; \frac{\gamma}{4\, K_\eta}\,  \biggr) \\
		& \; \le \;\sup{i\ge 1} \;\;  \Pro^n \biggl( \, \sup{\sigma^n_{i}\, \le \, z\, < \, \sigma^n_{i+1} \, \le \, \tilde{z} \, \le \, \sigma^n_{i+1} \,  + \, |\rho^m|} \left| X^n_{\tilde{z} \wedge T} -X^n_{z }\right| \; \ge \; \frac{\gamma}{4\, K_\eta}\,  \biggr).
	\end{align*}
	Thus, by assumption (b) this gives us $\lim_{m \to \infty} \,\limsup_{n\to \infty} \, \Gamma_{m,n} \le 2\eta$
	due to $|\rho^m|\to 0$ as $m\to \infty$. Since $\eta$ was chosen arbitrarily, this yields the claim. For a proof of the statement under (\ref{eq:oscillcond}) we refer to \cite{jakubowski}.
\end{proof}


\subsection{Weak continuity of stochastic integrals}\label{sect:stoch_int_conv_proof}

\subsubsection{Proof of Proposition \ref{prop:preserve_semimartin}}
\label{subsec:Proof_Semimart_Prop}

The proof follows similarly to Jakubowski, Mémin, \& Pagès \cite[Thm.~2.1, Lem.~1.3, \& Lem.~1.1]{jakubowskimeminpages}, only we rely on \eqref{eq:Mn_An_condition} in the end. For completeness we give the details.
\begin{proof}[Proof of Proposition \ref{prop:preserve_semimartin}]
	We only show the first part of the proposition. Let $(\mathcal{{F}}_{t})_{t\geq0}$ denote the natural filtration generated by $X$, and let $G$ be an arbitrary simple predictable process as in \cite[Ch.~2, Sec.~1]{protter}. Using a simple approximation argument for the stopping times used in this definition, it is not hard to see that we can assume them to be deterministic times. Fix $t>0$, $\theta,\delta>0$, and assume $|G|^*_t\le \delta$. Then, we can find continuous functions $f^{m,p}_{i}$ such that $\lim_{m \to \infty} \lim_{p\to \infty} G^{m,p}(X) = G$ almost surely with \\[-1ex]
	$$ G^{m,p}(X)\; = \; G_{0}\ind_{\{0\}}\, + \,\sum_{i=1}^{k}\, f_{i}^{m,p}\bigl(X_{r^m_{i,\, 1}}\, , \, \ldots \, , \, X_{r_{ i,\,\ell(i,m)}^m}\bigr)\, \ind_{(t_{i}\, ,\, t_{i+1}]} $$
	where $|f_i^{m,p}|^*_t \le \delta$. By right-continuity of  $X$ and the fact that $D\subseteq [0,\infty)$ is dense, without loss of generality we may assume that all of the $r^{m}_{i,u}$ and the $t_i$ belong to $D$. Consequently, the convergence of finite-dimensional distributions on $D$ and the continuous mapping theorem yield weak convergence of the simple integrals $\int_{0}^{t} G^{m,p}(X^n)\, \diff X^n \Rightarrow \int_{0}^{t} G^{m,p}(X)\, \diff X$ as $n\to \infty$ for fixed $m,p\ge 1$. Thus, by Portmanteau's theorem and the above approximation, we have\\[-1ex]
	\[
	\Pro \Bigl(\, \Bigl|\int_{0}^{t}\, G(X)_s \, \diff X_s \Bigr|\, >\, \theta\Bigr) \; \leq \; \liminf_{m\rightarrow\infty}\;  \liminf_{p \to \infty}\; \liminf_{n\rightarrow\infty}\; \Pro^n\Bigl(\, \Bigl|\int_{0}^{t}\, G^{m,p}(X^n)_s\, \diff X^{n}_s\Bigr|\, >\, \theta \Bigr)
	\]
	with $| G^{m,p}(X^n)|^*_t\leq \delta$ and the $G^{m,p}(X^n)$ being adapted to the filtration of the $X^n$.
	Now, using the good decompositions \eqref{eq:Mn_An_condition} of the $X^n$, we can write\\[-1ex]
	\[
	\mathbb{{P}}^n\Bigl( \, \Bigl|\int_{0}^{t}\, G^{m,p}(X^n)_s\diff X^{n}_s\Bigr| > \theta\Bigr) \leq  \mathbb{{P}}^n\Bigl(\, \Bigl|\int_{0}^{t}\, G^{m,p}(X^n)_s \diff M^{n}_s\Bigr| > \tfrac{\theta}{2} \Bigr) + \mathbb{{P}}^n\Bigl(\delta\, \text{TV}_{[0,t]}(A^{n})>  \tfrac{\theta}{2} \Bigr).
	\]
	Similarly to the considerations with respect to \eqref{eq:T4} in the proof of Theorem \ref{thm:3.19}, \eqref{eq:Mn_An_condition} ensures that the right-hand side tends to zero as $\delta \to 0$.
\end{proof}

\subsubsection{Proof of Theorem \ref{thm:3.19} and Theorem \ref{prop:4.36}}\label{sect:proof_main_weak_cont}
Before we proceed to prove the Theorems \ref{thm:3.19} and \ref{prop:4.36}, we briefly collect three useful observations for local martingales that follow more or less immediately from Lenglart's inequality.
\begin{lem}[Variations of Lenglart's inequality] \label{prop:Lenglart}
	There exist $c,C>0$ such that, for any local martingale $M$ with $M_0 =0$, the following holds for all $T>0$ and $\beta,\gamma,\delta>0$:
	\begin{enumerate}[(a)]
		\item if $\tilde \tau$ is any stopping time such that $\operatorname{inf}\{s: [M]_s\ge \gamma\} \wedge \operatorname{inf}\{s: |M|^*_s\ge \beta \} \, \le \, \tilde \tau $ and $\tilde \tau\, \le \, \operatorname{inf}\{s: [M]_s\ge \gamma\}$, then
		$$\Pro(\, |M|^*_T \, \ge \, \beta\, ) \; \le \; \frac{C(\sqrt{\gamma} \, +\,   \, \E[\, |\Delta M_{T\wedge \tilde \tau}|\, ])}{\beta} \; + \; \Pro(\, [M]_T \, \ge \, \gamma\, )\; ;$$
		\item if $\sigma$ is any stopping time such that $ \operatorname{inf}\{s: [M]^{1/2}_s\ge \beta \} \wedge \operatorname{inf}\{s: |M|^*_s\ge \gamma \}\, \le \, \sigma$ and $ \sigma \le \, \operatorname{inf}\{s: |M|^*_s\ge \gamma \}$, then
		$$\Pro(\, [M]^{1/2}_T \, \ge \, \beta\, ) \; \le \; \frac{c(\gamma \, +\,  \, \E[\, |\Delta M_{T\wedge \sigma}|\, ])}{\beta} \; + \; \Pro(\, |M|^*_T \, \ge \, \gamma\, )\; ;$$
		\item for any $\varepsilon>0$, if $H$ is such that $|H|_T^* \le \varepsilon$ and the Itô integral $Y:=\int_0^\bullet H_{s} \diff M_s$ exists and is a local martingale, then
		$$\Pro(\, |Y|^*_T \, \ge \, \beta\, ) \; \le \; \frac{C \varepsilon ( \sqrt{\gamma} \, +\, \E[\, |\Delta M_{T\wedge \tau}|\, ])}{\beta} \; + \; \Pro(\, [M]_T \, \ge \, \gamma \, ) \; + \; \Pro(\, |M|^*_T \, \ge \, \delta\, )$$
		where $\tau$ is any stopping time such that $\operatorname{inf}\{s: [M]_s\ge \gamma\} \wedge \operatorname{inf}\{s: |M|^*_s\ge \delta\} \, \le \, \tau$ and $\tau \, \le \, \operatorname{inf}\{s: [M]_s\ge \gamma\}$.
	\end{enumerate}
\end{lem}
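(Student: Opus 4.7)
The plan is to prove all three estimates via the same three-step template: apply the Burkholder--Davis--Gundy (BDG) inequality to the local martingale stopped at the auxiliary time $\tilde\tau$, $\sigma$, or $\tau$; convert the resulting $L^1$ bound into a probability bound via Markov's inequality; and account for the event that the stopping time occurs before $T$ by exploiting the two-sided hypotheses on the stopping times to absorb it into the residual probability on the right-hand side. For (a), let $\tau^{[M]}_\gamma := \inf\{s: [M]_s \ge \gamma\}$ and $\tau^{|M|^*}_\beta := \inf\{s: |M|^*_s \ge \beta\}$. The upper constraint $\tilde\tau \le \tau^{[M]}_\gamma$ yields $[M]_{\tilde\tau -} \le \gamma$, hence $[M]^{1/2}_{T\wedge \tilde\tau} \le \sqrt\gamma + |\Delta M_{T\wedge \tilde\tau}|$. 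Applying BDG to the stopped local martingale $M^{\tilde\tau}$ gives $\E[|M|^*_{T\wedge \tilde\tau}] \le C(\sqrt\gamma + \E[|\Delta M_{T\wedge \tilde\tau}|])$, and Markov turns this into a bound on $\Pro(|M|^*_{T\wedge \tilde\tau} \ge \beta)$. The key case split is then: on $\{\tilde\tau \ge T\}$ we have $|M|^*_T = |M|^*_{T\wedge \tilde\tau}$, while on $\{\tilde\tau < T\}$ the two-sided constraint $\tau^{[M]}_\gamma \wedge \tau^{|M|^*}_\beta \le \tilde\tau \le \tau^{[M]}_\gamma$ forces either $\tau^{[M]}_\gamma < T$ (hence $[M]_T \ge \gamma$) or $\tau^{|M|^*}_\beta \le \tilde\tau$ (hence $|M|^*_{T\wedge \tilde\tau} \ge \beta$ already). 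This yields the inclusion $\{|M|^*_T \ge \beta\} \subseteq \{|M|^*_{T\wedge \tilde\tau} \ge \beta\} \cup \{[M]_T \ge \gamma\}$, from which (a) follows.

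For (b), the argument is entirely symmetric, interchanging the roles of $[M]^{1/2}$ and $|M|^*$: one uses the reverse BDG inequality $\E[[M]^{1/2}_{T \wedge \sigma}] \le c\, \E[|M|^*_{T\wedge \sigma}]$ together with the bound $|M|^*_{T\wedge \sigma} \le \gamma + |\Delta M_{T\wedge \sigma}|$ (a consequence of $\sigma \le \inf\{s: |M|^*_s \ge \gamma\}$), followed by Markov and the analogous dichotomy which now absorbs $\{\sigma < T\}$ into $\{|M|^*_T \ge \gamma\}$.

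For (c), let $Y^\tau := \int_0^{\bullet \wedge \tau} H_{s} \diff M_s$ and note that $|H|^*_T \le \varepsilon$ gives $[Y^\tau]_T \le \varepsilon^2 [M]_{T\wedge \tau}$; combined with $\tau \le \tau^{[M]}_\gamma$, this yields $[Y^\tau]^{1/2}_T \le \varepsilon(\sqrt\gamma + |\Delta M_{T\wedge \tau}|)$. BDG and Markov then deliver the desired prefactor $C\varepsilon(\sqrt\gamma + \E[|\Delta M_{T\wedge \tau}|])/\beta$ for $\Pro(|Y|^*_{T\wedge \tau} \ge \beta)$, while the two-sided hypothesis on $\tau$ gives $\{\tau < T\} \subseteq \{[M]_T \ge \gamma\} \cup \{|M|^*_T \ge \delta\}$ by exactly the same case split as in (a). The only real subtlety throughout is the careful bookkeeping of the two-sided constraints on the auxiliary stopping times---precisely what permits the jumps $\Delta M$ at those stopping times to enter the bounds explicitly, which is essential for interfacing cleanly with the good decomposition condition \eqref{eq:Mn_An_condition} in the downstream applications of this lemma.
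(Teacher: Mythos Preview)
Your proposal is correct and follows essentially the same approach as the paper, which simply cites Lenglart's inequality \cite[Lem.~I.3.30]{shiryaev} with the $L$-domination supplied by BDG. Your template of BDG applied to the stopped local martingale, followed by Markov and the two-sided stopping-time dichotomy, is precisely the content of that argument written out in full rather than deferred to the reference.
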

\begin{proof}
	The proof proceeds as in \cite[Lem.~I.3.30]{shiryaev} with the $L$-domination property satisfied due to the classical Burkholder--Davis--Gundy inequality.
\end{proof}

We can now prove Theorems \ref{thm:3.19} and \ref{prop:4.36}. We only consider the case in which the pairs of integrands and integrators converge weakly on the product space $(\D_{\R^d}[0,\infty), \, \dM)\times (\D_{\R^d}[0,\infty), \, \dM)$, as in \eqref{eq:conv_together}. The statements on convergence together on the strong $M_1$ space, as in \eqref{eq:concerted_stoch_int_conv}, follow analogously (it will be clear from the proof where and how things should be changed). 

\begin{proof}[Proof of Theorem \ref{thm:3.19} and Theorem \ref{prop:4.36}] 
	According to Proposition \ref{prop:preserve_semimartin}, $X$ is a semimartingale with respect to the natural filtration of the limiting pair $(X,H)$. Denote by $\rho^m=\{0=t^m_0< t^m_1 < ...\}$ a sequence of deterministic partitions of $[0,\infty)$ such that $\rho^m \subseteq \rho^{m+1}$ and with $|\rho^m|:= \operatorname{sup}_{i\ge 1} \,  |t^m_{i+1}-t^m_i| \, \to \, 0$ as $m\to \infty$.  Without loss of generality, we may assume that $t^m_i  \notin  \operatorname{Disc}_\Pro(H,X) $
	for all $m,i\ge 1$, where $\operatorname{Disc}_\Pro(H,X)$ is defined just after \eqref{eq:maxnumosc}. Indeed, this is possible since $\operatorname{Disc}_\Pro(H,X)$ is well-known to be at most countable. Let the partition operator $\rho^{m,\varepsilon}$ and its stopping times $\tau^{m,\varepsilon}_{k,i}$ be defined as in Definition \ref{def:3.21}. Using the notation from Definition \ref{def:3.20}, set $H^{\, \vert \, m} := I_{\rho^m}(H)$ and denote $H^{n\, \vert \,  m} := I_{\rho^m}(H^n)$ as well as $H^{n \, \vert \,  m,\varepsilon} := I_{\rho^{m,\varepsilon}(H^n)}(H^n)$ for $n\ge 1$. Again, note that $|I_{\rho^{m,\varepsilon}(H^n)}(H^n) - H^n |^*_T  \le \varepsilon$ for all $\varepsilon>0$, $T>0$ and $m,n\ge 1$. Let $f: \D_{\R^{2d}}[0,\infty) \to \R$ be bounded and Lipschitz continuous with respect to the $M_1$ topology and therefore also with respect to the weighted uniform norm. Let us denote its Lipschitz constant by $\norm{f}_{\operatorname{Lip}}$. We are going to show that 
	\begin{align}
		\E\Big[ f \big(\, X, \, \int_0^\bullet \, H_{s-} \; \diff X_s \, \big)\Big] \; - \;\E^n\Big[ f \big(\, X^n, \, \int_0^\bullet \, H^n_{s-} \; \diff X^n_s \, \big)\Big] \; \; \conv{\phantom{n \to \infty}}{}\; \; 0 \label{eq:4.8}
	\end{align}
	as $n\to \infty$. To achieve the convergence in (\ref{eq:4.8}), we rewrite the left-hand side of (\ref{eq:4.8}) as 
	\begin{align*}
		&\E\Big[ f \big(\, X, \,\int_0^\bullet \, H_{s-} \; \diff X_s \, \big)\Big] \; - \; \E\Big[ f \big(\, X, \, \int_0^\bullet \, H^{\, \vert \, m}_{s-} \; \diff X_s \, \big)\Big] \label{eq:T1} \tag{T1}\\
		& \qquad + \; \E\Big[ f \big(\, X, \, \int_0^\bullet \, H^{\, \vert \, m}_{s-} \; \diff X_s \,  \big)\Big] \; - \; \E^n\Big[ f \big(\, X^n, \,\int_0^\bullet \, H^{n \, \vert \, m}_{s-} \; \diff X^n_s \, \big)\Big] \label{eq:T2} \tag{T2}\\
		& \qquad + \;\E^n\Big[ f \big(\, X^n, \,\int_0^\bullet \, H^{n\, \vert \, m}_{s-} \; \diff X^n_s \, \big)\Big] \; - \; \E^n\Big[ f \big(\, X^n, \, \int_0^\bullet \, H^{n \, \vert \, m,\varepsilon}_{s-} \; \diff X^n_s \, \big)\Big] \label{eq:T3}\tag{T3}\\
		& \qquad + \;\E^n\Big[ f \big(\, X^n, \, \int_0^\bullet \, H^{n \, \vert \, m,\varepsilon}_{s-} \; \diff X^n_s \, \big)\Big] \; - \; \E^n\Big[ f \big(\, X^n, \, \int_0^\bullet \, H^{n}_{s-} \; \diff X^n_s \, \big)\Big] \label{eq:T4} \tag{T4}
	\end{align*}
	and we show that each of the four differences tends to zero conveniently as $m,\varepsilon, n\to 0$. Let $\theta>0$, choose $T>0$ such that $\norm{f}_{\operatorname{Lip}} \int_T^\infty \e^{-t} \diff t \le \theta/6$ and denote $\xi_T:= 1- \e^{-T}$. Recall the standard definition of $\dM$ on $\D_{\R^d}[0,\infty)$, given in \eqref{eq:$M_1$_metric_real_line} in Appendix \ref{app:J1_M1_tops}.
    
    \eqref{eq:T1}: For the first term, we observe that due to the Lipschitz continuity of $f$, Tonelli's theorem and the fact that $\dM^{[0,T]}(x,y)\le |x-y|^*_T$ , it holds 
		\begin{align*}
			|\eqref{eq:T1}| \; &\le \; \frac \theta 6 \; \; + \; \; \norm{f}_{\operatorname{Lip}} \, \xi_T \; \;  \E\Big[ \; \big| \, \int_0^\bullet \, (H_{s-} - H^{\, \vert \, m}_{s-}) \; \diff X_s \, \big|^*_T \; \wedge \; 1 \; \Big]  .
		\end{align*}
		Since $X$ is a semimartingale and $|\rho^m|\to 0$, by the Riemann-Stieltjes approximation theorem for Itô integration with respect to càdlàg semimartingale integrators, we have that 
		$$ \int_0^\bullet \, H^{\,\vert \, m}_{s-} \; \diff X_s \; = \; \sum_{i=0}^{\infty} \; H_{t_{i}^m} \, \big( X_{t_{i+1}^m \, \wedge\,  \bullet}-X_{t_i^m \, \wedge \, \bullet}\big) \; \; \conv{\phantom{m\to \infty}}{} \; \; \int_0^\bullet \, H_{s-} \; \diff X_s $$ 
		uniformly on compacts in probability as $m\to \infty$. In particular, this means that we can find $m_1 \ge 1$ such that, for all $m\ge m_1$, it holds that $ |\eqref{eq:T1}| \; \le \; \theta/3$.

        \eqref{eq:T4}: We are going to make use of the good decompositions \eqref{eq:Mn_An_condition}, $X^n=M^n+A^n$. Let us first define $\tau^n_\delta:=\operatorname{inf}\{s: |M^n|^*_s\ge \delta \}$, $\sigma^n_\gamma:=\operatorname{inf}\{s: [M^n]_s\ge \gamma\}$ and $\Gamma(\rho):=\E^n[\, |\Delta M^n_{T\wedge \rho}|\, ]$. Recall that since the decompositions $X^n=M^n+A^n$ are good, the $M^{n}$ are local martingales satisfying that for all $\delta>0$ it holds $\limsup_{n\to \infty} \Gamma(\tau^n_{\delta})< \infty$ and the $A^{n}$ are processes of locally finite variation with tight total variation over compacts. Clearly, by convergence in the Skorokhod space it holds that $(|X^n|_T^*)_{n\ge 1}$ is tight and due to the tightness of the total variation of the $A^n$, also $(|A^n|_T^*)_{n\ge 1}$ is tight, which together---by the decomposition $X^n=M^n+A^n$---implies that $(|M^n|_T^*)_{n\ge 1}$ is tight. Denote $Y^{n\, \vert \, m, \varepsilon}:= \int_0^\bullet (H^n_{s-}-H^{n\, \vert \, m, \varepsilon}_{s-}) \diff M^n_s$ and recall $|H^n-H^{n\, \vert \, m, \varepsilon}|\le \varepsilon$. Applying first (c) of Lenglart's inequality  \ref{prop:Lenglart}, we obtain
		\begin{align*}
			&\E^n \Big[ \, \big| \, Y^{n\, \vert \, m, \varepsilon} \, \big|^*_T \; \wedge \; 1 \, \Big] \; \le \; \beta \; + \; \Pro^n \big( |Y^{n\, \vert \, m, \varepsilon}|^*_T \; > \; \beta \big) \\
			\;\;&\le \; \beta \; + \; \frac{C\varepsilon (\sqrt{\gamma} \, + \, \Gamma(\sigma^n_{\gamma}\wedge \tau^n_\delta \,))}{\beta} \; + \; \Pro^n(\, [M^n]_T \, \ge \, \gamma \, ) \;+\; \Pro^n(|M^n|^*_T \, \ge \, \delta)
		\end{align*}
		for all $\beta,\gamma,\delta >0$. Another application of Lenglart's inequality Lemma \eqref{prop:Lenglart}(b) gives
		\begin{align*}\E^n \Big[ \, \big| \, Y^{n\, \vert \, m, \varepsilon} \, \big|^*_T \; \wedge \; 1 \, \Big] \; \le \; \beta \; &+
			  \frac{C\varepsilon(\sqrt{ \gamma} \, + \, \Gamma(\sigma^n_{\gamma}\wedge \tau^n_\delta))}{\beta} \; + \; \frac{c(\delta \, + \, \Gamma(\tau^n_{\delta }))}{\sqrt{\gamma}} \; \\
			&\;\;\; + \; 2\Pro^n(|M^n|^*_T \, \ge \, \delta)
			\end{align*}
		for all $\beta, \gamma, \delta>0$. Now, note that $\Gamma(\sigma^n_{\gamma}\wedge \tau^n_\delta)\le \operatorname{max}\{2\delta,\Gamma(\tau^n_\delta)\}$ and hence
		$$ \limsup\limits_{n\to \infty} \, \sup{m \, \ge\, 1} \; \E^n \Big[ \, \big| \, \int_0^\bullet (H^n_{s-}-H^{n\, \vert \, m, \varepsilon}_{s-}) \diff M^n_s\, \big|^*_T \; \wedge \; 1 \, \Big] \; \longrightarrow \; 0$$
		as $\varepsilon \to 0$, due to the tightness of $(|M^n|^*_T)_{n\ge 1}$ and the fact that $\beta, \delta,\gamma$ were arbitrary (first choose $\beta$ small enough and $\delta$ large enough to make the first and last term small, then increase $\gamma$ to downsize the third term and finally let $\varepsilon \to 0$).
		Furthermore,  by basic properties of Lebesgue-Stieltjes integration, it is true that 
		$$ \Big| \, \int_0^\bullet \, (H_{s-}^n - H^{n \, \vert \, m,\varepsilon}_{s-}) \; \diff A^{n}_s \, \Big|^*_{T} \; \le \; \varepsilon \, \operatorname{TV}_{[0,T]}(A^{n})$$
		and therefore, by tightness of the total variation of the $A^n$ over compacts,
		$$ \E^n\Big[ \; \big| \, \int_0^\bullet \, (H_{s-}^n - H^{n \, \vert \, m,\varepsilon}_{s-}) \; \diff A^{n}_s \, \big|^*_{T}  \; \wedge \; 1 \, \Big]  \; \le \;  \beta  +  \Pro^n\big( \; \operatorname{TV}_{[0,T]}(A^{n}) \, > \, \beta / \varepsilon  \; \big) \; \longrightarrow  \;  \; \beta $$
		as $\varepsilon \to 0$ and hence, since $\beta$ was arbitrary,
		$$ \sup{n,m\, \ge \, 1}\; \E^n\Big[ \; \big| \, \int_0^\bullet \, (H_{s-}^n - H^{n \, \vert \, m,\varepsilon}_{s-}) \; \diff A^{n}_s \, \big|^*_{T}  \; \wedge \; 1 \, \Big]   \; \longrightarrow\;  \; 0 \; .$$
		Thus, there exists $\varepsilon_{4}>0$ such that for all $0 < \varepsilon \le \varepsilon_{4}$, $m\ge 1$ and $n\ge 1$ large enough it holds
		$$ \norm{f}_{\operatorname{Lip}} \, \xi_T \; \; \E^n\Big[ \; \big| \, \int_0^\bullet \, (H_{s-}^n - H^{n \, \vert \, m,\varepsilon}_{s-}) \; \diff X^n_s \, \big|^*_T \; \wedge \; 1 \;  \Big]  \; \le \; \frac \theta {6}. $$
		Hence, similarly as in the first step of the consideration of \eqref{eq:T1}, we obtain $|\eqref{eq:T4}| \le \theta/3$ for all $0<\varepsilon\le\varepsilon_4$, $m\ge 1$ and $n\ge 1$ large enough.
        
        \eqref{eq:T3}: Again, as before, we have 
		\begin{align*}
			|\eqref{eq:T3}| \; \le \; \frac \theta 6 \; \; + \; \; \norm{f}_{\operatorname{Lip}} \, \xi_T \; \;  \E^n\Big[ \; \big| \, \int_0^\bullet \, (H^{n \, \vert \, m}_{s-} - H^{n \, \vert \, m,\varepsilon}_{s-}) \; \diff X^n_s \, \big|^*_T \; \wedge \; 1 \; \Big].
		\end{align*}
		We recall that $H^{n\, \vert \, m}:= I_{\rho^m}(H^n)$ and $H^{n \, \vert \, m, \varepsilon}:= I_{\rho^{m,\varepsilon}(H^n)}(H^n)$ and due to the functional tightness of $H^n, X^n$ (which implies the assumptions of Proposition \ref{prop:3.22}, see Corollary \ref{cor:A9}), we can apply Proposition \ref{prop:3.22}. Hence, there exists $m_3\ge 1$ such that for all $m\ge m_3$ and all $\varepsilon>0$ it holds $\limsup_{n\to \infty}  |\eqref{eq:T3}|  \le \theta/3$.
		
	    \eqref{eq:T2}: In a last step, we seek to control \eqref{eq:T2}. First, fix $\varepsilon$ such that $0< \varepsilon < \varepsilon_4$ and $m\ge \operatorname{max}\{m_1,m_3\}$. We recall, that according to the previous two points, it then holds that $\limsup_{n\to \infty}  [ \, | \eqref{eq:T1}|  + |\eqref{eq:T3}|  +  |\eqref{eq:T4}| \, ]  \le \theta$. Hence, it only remains to show that for the fixed $m$, we have $|\eqref{eq:T2}| \to 0$ as $n\to \infty$. By Skorokhod's representation theorem, there exists a probability space $(\tilde\Omega, \tilde{\mathcal{F}}, \tilde\Pro)$ and random vectors $(\tilde{H}^n, \tilde{X}^n)$ and $(\tilde H,\tilde X)$ on this common probability space, mapping into $\D_{\R^d}[0,\infty)\times \D_{\R^d}[0,\infty)$, such that $(\tilde{H}^n, \tilde{X}^n) \; \to (\tilde{H}, \tilde{X})$ almost surely as $n\to \infty$ on $(\D_{\R^d}[0,\infty), \tilde \rho)\times (\D_{\R^d}[0,\infty), \rho)$ and such that the law of $(\tilde{H}^n, \tilde{X}^n)$ coincides with the law of $(H^n, X^n)$ for all $n\ge 1$ as well as the law of $(\tilde{H}, \tilde{X})$ coincides with the law of $(H,X)$. Since $\Pro (\Delta (H_{t^m_i},X_{t^m_i})  \neq (0,0) \, )  =  0$ 
		for all $i\ge 0$ by choice of the $t^m_i$, we also have
		$$ \tilde\Pro \, \Big( \, \union{i=0}{\infty} \{ \Delta (\tilde{H}_{t^m_i},\tilde{X}_{t^m_i}) \, \neq \, (0,0)\} \, \Big) \; =  \; 0 $$
		due to the identical distribution of the Skorokhod representations. Hence, there is almost sure convergence as above such that the limits do not have discontinuities at  $t^m_0,t^m_1,...$ .\\
		\phantom{mi} For simplicity of notation, we will denote $(\tilde\Omega, \tilde{\mathcal{F}}, \tilde\Pro)$ by $(\Omega, \mathcal{F}, \Pro)$, $(\tilde{H}^n, \tilde{X}^n)$ by $(H^n, X^n)$ and $(\tilde H, \tilde X)$ by $(H,X)$ as there is no risk of confusion at this point. Due to the fact that $t^m_0,t^m_1,...$ are almost surely continuity points of $H$, it is well known that the projection maps $\pi_{t^m_i}: \D_{\R^d}[0,\infty) \to \D_{\R^d}[0,\infty)$ with $\pi_{t^m_i}(z)=z(t^m_i)$ are continuous. Hence, $(H^n_{t^m_0},..., H^n_{t^m_{k}})  \to  (H_{t^m_0},..., H_{t^m_{k}})$
		almost surely in $\R^{k}$ as $n\to \infty$ for all $k\ge 1$, and thus for all $s>0$,
		$$ H^{n \, \vert \, m}_{ -}|_{[0,s]} \; = \; \sum_{i=0}^{\infty} \, H^n_{t^m_i} \; \ind_{(t^m_i \wedge s, \, t^m_{i+1}\wedge s]} \; \; \conv{n\to \infty}{} \; \; \sum_{i=0}^{\infty} \, H_{t^m_i} \; \ind_{(t^m_i\wedge s, \, t^m_{i+1}\wedge s]}\; = \;  H^{\, \vert \, m}_{ -}|_{[0,s]} $$
		almost surely with respect to the uniform topology (and therefore also in the $J_1$ and $M_1$ topology), since there are only finitely many $t^m_i \le s$. In particular, the sequence $H_{-}^{n\, \vert \, m}|_{[0,s]}$, $n\ge 1$, almost surely has a uniformly bounded number of discontinuities. In addition, we know that $\operatorname{Disc}_\Pro(X)=\{q>0\,: \, \Pro(\Delta X_q \neq 0)> 0\}$ is countable and for $s \notin \operatorname{Disc}_\Pro(X)$ it holds $X^n|_{[0,s]} \to X|_{[0,s]}$ almost surely. Further, $H_{\bullet\, -}^{\, \vert \, m}$ and $X$ almost surely have no common discontinuities (as $X$ almost surely is continuous at $t^m_0,t^m_{1},...$). Proposition \ref{thm:ContinuityOfSimpleIntegralsInM_1Topology} on simple integral convergence for $\rho=\dM$ (or \cite[Equations (1.12)-(1.13)]{kurtzprotter} for $\rho=\dJ$) therefore gives us that 
		$$ \Big( \, X^n|_{[0,s]}, \, \int_0^{\bullet} \, H^{n\, \vert \, m}_{ r- } \; \diff X^n_r\big|_{[0,s]}\, \Big) \; \; \conv{n\to \infty}{} \; \;  \Big( \, X|_{[0,s]}, \,\int_0^{\bullet} \, H^{\, \vert \, m}_{ r- } \; \diff X_r\big|_{[0,s]}\, \Big)$$
		for all $s>0$ in a co-countable subset of $[0,\infty)$ almost surely on $(\D_{\R^{2d}}[0,s], \, \rho)$. By construction of the $M_1$ metric for the time interval $[0,\infty)$, this directly implies that we also have $(  X^n ,\, \int_0^{\bullet} \, H^{n\, \vert \, m}_{ r- } \, \diff X^n_r )\to  (  X, \,\int_0^{\bullet} \, H^{\, \vert \, m}_{ r- } \, \diff X_r )$ almost surely on $(\D_{\R^{2d}}[0,\infty), \, \dM)$.
		Thus, by dominated convergence and the continuity of $f$ for the $M_1$ topology, $|\eqref{eq:T2}| \to 0$ as $n\to \infty$.
        
	Consequently, for all $0<\varepsilon<\varepsilon_4$ and $m\ge \operatorname{max}\{m_1,m_3\}$ it holds $\limsup_{n\to \infty} [|\eqref{eq:T1}|+ |\eqref{eq:T2}| + |\eqref{eq:T3}| +|\eqref{eq:T4}| ] \le \theta$. As $\theta>0$ was chosen arbitrarily, this gives us \eqref{eq:4.8}.
\end{proof}

\subsubsection{Proof of Proposition \ref{prop:3.3}}
\label{subsec:Suff_Cond_AVCO}

\begin{proof}[Proof of Proposition \ref{prop:3.3}] \ 
	The first part of the proposition follows directly from the standard $J_1$ tightness criteria using the $J_1$ modulus of continuity. Therefore it only remains to establish the second part: let $\gamma>0$. Due to Skorokhod's representation theorem, without loss of generality, we may assume that $(H^n, X^n)\to (H,X)$ almost surely
	in $(\D_{\R^d}[0,\infty), \, \dM)^2$ as $n\to \infty$, with all quantities defined on a common probability space $(\Omega, \mathcal{F}, \Pro)$. By continuous mapping, we also have $(H^n|_{[0,T]}, \, X^n|_{[0,T]}) \to (H|_{[0,T]}, \, X|_{[0,T]})$ almost surely in $(\D_{\R^d}[0,T], \dM)^2$ as $n\to \infty$ for all $T$ in some co-countable set $D\subseteq (0,\infty)$. If we show that (\ref{eq:oscillcond}) holds for all $T \in D$, then we are done by monotonicity of the consecutive increment function $\hat w$ with respect to the interval on which it is defined. Let $T \in D$. To lighten the notation, we will write $H^n, H, X^n, X$ instead of $H^n|_{[0,T]}, \, H|_{[0,T]}, \, X^n|_{[0,T]}, \, X|_{[0,T]}$. Clearly, $H$ and $X$ almost surely have no common discontinuities, i.e. there exists $A \in \mathcal{F}$ with $\Pro(A)=1$ such that $$\operatorname{Disc}(H(\omega)) \; \cap \; \operatorname{Disc}(X(\omega)) \; = \; \emptyset $$
	for all $\omega \in A$. This holds true since the Skorokhod representation of the limit and the original limit coincide in law and the set of all $(x,y)$ such that $\operatorname{Disc}(x) \cap \operatorname{Disc}(y)\neq \emptyset$ is a measurable set of $\mathscr{B}(\D_{\R^d}[0,T]) \otimes \mathscr{B}(\D_{\R^d}[0,T])$. Fix $\omega \in A$. Since $t\mapsto H_t(\omega)$, $t\mapsto X_t(\omega)$ are càdlàg, there exist $0= t_1^H<...<t^H_{k_H}= T$ and $0= t_1^X<...<t^X_{k_X}= T$ (which depend on $\omega$) such that $\{t_2^H,...,t^H_{k_H-1}\} \cap \{t_2^X,...,t^X_{k_X-1}\}= \emptyset$ and
	\begin{align}
		&\sup{\substack{s,t \, \in \, [t_i^H\, , \, t_{i+1}^H) \\ i=1,...,k_H-1}} \; |H_s(\omega) \, -\, H_t(\omega)| < \frac \gamma 2  \qquad \text{and} \quad \sup{\substack{s,t \, \in \, [t_i^X\, , \, t_{i+1}^X) \\ i=1,...,k_X-1}} \; |X_s(\omega) \, -\, X_t(\omega)|  < \frac \gamma 2 . \label{eq:3.12} 
	\end{align}
	Choose
	$$ \lambda \; := \; \lambda(\omega) \; < \; \frac14 \; \min{} \left\{ \, |t-s| \; : \; t,s \in \{t^H_1,...,t^H_{k_H}, t^X_1,...,t^X_{k_X}\},\; t\neq s \, \right\}$$
	the scaled shortest time difference between two distinct $t_i^H, t_j^X$ and let $N:=N(\omega) \ge 1$ such that for all $n\ge N$, we have
	\begin{align} \dM\left(H^n(\omega), H(\omega)\right) \; < \; \frac \gamma 4 \; \wedge \; \lambda \qquad \text{ and } \qquad \dM\left(X^n(\omega), X(\omega)\right) \; < \; \frac \gamma 4 \; \wedge \; \lambda  \,. \label{eq:3.11}\end{align}
	Assume for the sake of a contradiction that there exists an $n\ge N$ and $s,t,v$ with $0\le s <t < v\le (s+\lambda)\wedge T$ such that
	\begin{align}
		\hat{w}^T_{\lambda}(H^n(\omega), X^n(\omega)) \; \ge \; |H^n_s(\omega) \, - \, H^n_t(\omega)| \; \wedge \;  |X^n_t(\omega) \, - \, X^n_v(\omega)| \; > \; \gamma \, .\label{eq:3.10}
	\end{align}
	We will show that this leads to a contradiction. Once we have established this contradiction, it implies that the set 
	$$  \intersection{\ell = 1}{\infty} \; \intersection{k =1}{\infty} \; \union{n=k}{\infty} \; \{ \hat{w}^T_{\lambda_\ell}(H^n, X^n) \; > \; \gamma\} \; \subseteq \; \Omega \setminus A$$
	has zero probability for any sequence $\lambda_\ell \downarrow 0$ as $\ell \to \infty$. Then, using the continuity of the probability measure and the definition of $\hat{w}$, we are able to write
	\begin{align*}
		0 \; = \; \Pro \biggl( \; \intersection{\ell = 1}{\infty} \; \intersection{k =1}{\infty} \; \union{n=k}{\infty} \; \{ \hat{w}^T_{\lambda_\ell}(H^n, X^n) \; > \; \gamma\}  \, \biggr) \; \ge \; \lim\limits_{\ell \to \infty} \; \limsup\limits_{n\to \infty} \;\Pro \Bigl( \, \hat{w}^T_{\lambda_\ell}(H^n, X^n) \; > \; \gamma   \, \Bigr)
	\end{align*}
	and we deduce $\lim_{\lambda \downarrow 0} \,\limsup_{n\to \infty} \,\Pro ( \hat{w}^T_{\lambda}(H^n, X^n)  >  \gamma )  = 0 $. Since $\gamma>0$ was chosen arbitrarily, this then holds for all $\gamma>0$. Thus, it only remains to show that, for the fixed $\gamma>0$, $\omega \in A$, $\lambda>0$ and $n\ge N$, we have that (\ref{eq:3.10}) yields a contradiction: due to (\ref{eq:3.11}), we can find parametric representations $(u^{X^n}, r^{X^n}) \in \Pi(X^n(\omega))$ and $(u^X, r^X) \in \Pi(X(\omega))$ such that 
	$|u^{X^n} - u^X|^*_1  \vee   |r^{X^n} - r^X|^*_1  <  \gamma /4  \wedge \lambda$.
	Furthermore, there exist $z_t, z_v \in [0,1]$ such that
	$$ u^{X^n}(z_t)=X^n_t(\omega), \; r^{X^n}(z_t)=t \qquad \text{ and } \qquad u^{X^n}(z_v)=X^n_v(\omega), \; r^{X^n}(z_v)=v \,.$$
	Therefore, in particular, $r^X(z_t), r^X(z_v) \in (t-\lambda, v+ \lambda)\cap [0,T] \subseteq (s-\lambda, v+\lambda) \cap [0,T]$ and 
	$|u^X(z_t) - X^n_t(\omega) |  <   \gamma /4$ and $|u^X(z_v) - X^n_v(\omega) | < \gamma / 4$ implying, together with \eqref{eq:3.10}, that 
	\begin{align*}
		\gamma \; < \; |X^n_t(\omega) - X^n_v(\omega)| \; &\le \; |X^n_t(\omega) - u^X(z_t)|+ |X^n_v(\omega) - u^X(z_v)| + |u^X(z_v) - u^X(z_v)|\\
		& < \; \frac \gamma 4+ \frac \gamma 4 + |u^X(z_t) - u^X(z_v)|
	\end{align*}
	and thus $|u^X(z_t) - u^X(z_v)|  >  \frac \gamma 2$. Clearly, $(u^X(z_t),r^X(z_t))$ and $(u^X(z_v),r^X(z_v))$ just lie on the completed graph of $X(\omega)$. Still, since $X(\omega)$ is càdlàg, this means that we can find $p_1, p_2 \in (s-\lambda, v+ \lambda)\cap [0,T)$ with $p_1<p_2$ (and therefore $|p_1-p_2|< 3\lambda$) so that $|X_{p_1}(\omega) - X_{p_2}(\omega)|  >  \frac \gamma 2$.
	According to (\ref{eq:3.12}) there exists a $t^X_i$ such that
	$$ 0 \vee (s- \lambda) \; \le \; p_1 \; < \; t^X_i \; \le \; p_2 \; \le \; (v+ \lambda) \wedge T$$
	and there cannot be another $t_j^H$ or $t_i^X$ between $0\vee(s-\lambda)$ and $(v+\lambda) \wedge T$ due to the choice of $\lambda$ and the fact that $v-s\le \lambda$. By repeating this procedure for $H^n$, however, we obtain the existence of a $t_j^H$ between $0\vee(s-\lambda)$ and $(v+\lambda) \wedge T$. This means that there are $t_i^X$ and $t_j^H$ lying somewhere in the interval $[0\vee (s-\lambda), (v+\lambda)\wedge T]$ yielding a contradiction since the interval length is $(v+\lambda)-(s-\lambda)=(v-s)+2\lambda \le 3\lambda<4\lambda$ but by definition $|t_i^X-t_j^H|\ge 4\lambda$.
\end{proof}

\subsubsection{Proof of Corollary \ref{cor:2.14}}
\label{subsec:Proof_of_Cor_3.16}

\begin{proof}[Proof of Corollary \ref{cor:2.14}]
	Denote the integral processes by $Y^n:= \int_0^\bullet H^n_s \diff X^n_s$. Since $[Y^n] \Rightarrow [Y]$ on $(\D_{\R^d}[0,\infty), \dM)$ by \eqref{eq:interplay_cond} and $[Y]$ is almost surely continuous, it is well-known that in fact $[Y^n] \Rightarrow [Y]$ on $(\D_{\R^d}[0,\infty), |\cdot|^*_\infty)$ which in turn by Prokhorov's theorem and relative compactness in this space implies in particular that for all $T>0$, $\eta >0$,
	\begin{align} \lim\limits_{\theta \downarrow 0} \; \; \sup{n\ge 1}\; \Pro^n \Big( \, \sup{0\, \le \, s \, \le \, t \, \le \, (s+\theta) \wedge T} \; [Y^n]_t- [Y^n]_s \; > \; \eta \, \Big) \; = \; 0 . \label{eq:ModOfCont_QuadrVar}\end{align}
	Note that for any two $\mathbbm{F}^n$-stopping times $\rho^n, \sigma^n $ which are bounded by $T>0$ and for which holds $\rho^n\le \sigma^n \le \rho^n +\theta$, $\theta>0$, we have for any $\varepsilon, \eta>0$,
	\begin{align*}
		\Pro^n \big( |Y^n_{\rho^n} - Y^n_{\sigma^n}|\, \ge \, \eta \big) \; &\le \; \Pro^n \big( |Y^n_{\bullet \wedge \sigma^n} - Y^n_{\bullet \wedge \rho^n}|^*_T \, \ge \, \eta \big) \\ &\le \;  \frac{C(\sqrt{\varepsilon} + 2\tilde C_n)}{\eta}\; + \; \Pro^n \big( \, [Y^n]_{\sigma^n} - [Y^n]_{\rho^n} \, \ge \, \varepsilon \big)
	\end{align*}
	by Lemma \ref{prop:Lenglart}(a), where $\tilde{C}_n=\E[\, |\Delta (Y^n_{T\wedge \tau^n_\varepsilon})|\,]$ and $\limsup_{n\to \infty} \tilde{C}_n \to 0$ according to \eqref{eq:vanishing_jumps_condition}. Fix $\eta>0$,  making use of \eqref{eq:ModOfCont_QuadrVar}, we therefore obtain 
	$$\lim\limits_{\theta \downarrow 0} \; \limsup\limits_{n\to \infty} \; \sup{\rho^n, \sigma^n} \Pro^n \big( |Y^n_{\sigma^n} - Y^n_{\rho^n}| \, > \, \eta \big) \; = \; 0$$
	where the supremum runs over all $\mathbbm{F}^n$-stopping times $\rho^n\le \sigma^n \le \rho^n+\theta$ bounded by $T$. According to Aldous' criterion \cite[Thm.~VI.4.5]{shiryaev} this yields tightness of $(Y^n)_{n\ge 1}$ on the $J_1$ Skorokhod space (note that the stochastic boundedness on compacts follows similarly from Lemma \ref{prop:Lenglart} and the tightness of the $[Y^n]$). Now, consider any subsequence of $(Y^n)_{n\ge 1}$ and choose a further subsequence---for simplicity of notation also denoted by $(Y^n)_{n\ge 1}$---which converges to some process $Z$ on $(\D_{\R^d}[0,\infty), \dJ)$. The weak limit $X$ of the $X^n$ being continuously supported, we deduce that $(X^n, Y^n) \Rightarrow (X, Z)$ on $(\D_{\R^{2d}}[0,\infty), \dJ)$. Since \eqref{eq:vanishing_jumps_condition} implies \eqref{eq:Mn_An_condition} of the $Y^n$, according to the (ii) of Proposition \ref{prop:3.3} and Theorem \ref{thm:3.19} it holds that $(X^n,Y^n,\int Y^n_{s-} \diff Y^n_{s}) \Rightarrow  (X,Z,\int Z_{s-} \diff Z_{s})$. By the continuous mapping theorem and the fact that the map $(\alpha,\beta,\gamma)\mapsto (\alpha, \beta, \beta^2-\beta^2(0)-2\gamma)$ is continuous on the strong $J_1$ space, we derive $(X^n,Y^n, [Y^n]) \Rightarrow (X,Z, [Z])$ on $(\D_{\R^{2d}}[0,\infty), \dJ)$ from the definition $[Y^n]=(Y^n)^2-(Y^n_0)^2 - 2\int Y^n_{s-} \diff Y^n_s$. Then, uniqueness of weak limits implies $[Z]=[Y]$. We assume $Z$ and $X$ to be defined on the same filtered probability space (otherwise move to the product space). By bilinearity and the above equality,\\[-2ex]
	$$ [Z-Y] =  2[Z] - 2[Z,Y] \; = \; 2[Y] \; - \; 2 \int_0^\bullet H_s \, \diff [Z, X]_s$$
	and if $[Z, X]=[Y, X]$, then this implies $[Z-Y]=0$. The $(X^n, Y^n)$ converging together on the strong space, a similar argument gives $[Y^n, X^n]  \Rightarrow  [Z,X]$,
	while at the same time we have\\[-1ex]
	$$ [Y^n, X^n] \; = \; \int_0^\bullet  H^n_s \diff [X^n]_s \; \Rightarrow \; \int_0^\bullet  H_s \diff [X]_s \; = \; [Y, X]$$
	by \eqref{eq:interplay_cond}. Again from uniqueness of weak limits, this yields $[Z,X]=[Y,X]$ and therefore, as described above, $[Z-Y]=0$. Finally, we observe that the process $Z$ is a local martingale. Indeed, first choose a sequence of $\mathbbm{F}^n$-stopping times $\sigma^n$ such that the processes $Y^n_{\bullet \, \wedge\,  \sigma^n}$ are martingales and $Y^n_{\bullet \, \wedge\,  \sigma^n } \Rightarrow Z$. Defining the stopping times $\tau_a(X):= \operatorname{inf} \{ t \ge 0 : |X_t| \ge a \text{ or } |X_{t-}|\ge a\}$, the processes $Y^n_{\bullet \, \wedge \, \sigma^n \, \wedge \,\tau_a(Y^n_{\bullet \, \wedge \, \sigma^n})}$ are still martingales and it holds $Y^n_{\bullet \, \wedge\, \sigma^n\, \wedge \, \tau_a(Y^n_{\bullet \wedge \sigma^n})} \Rightarrow Z_{\bullet \, \wedge \, \tau_a(Z)}$ for all $a$ in some well-chosen co-countable dense subset $D \subseteq [0,\infty)$ (due to continuity properties of this very stopping action, see e.g.~\cite[Prop.~VI.2.12]{shiryaev}). Proceeding similarly to the proof of \cite[Prop.~IX.1.12]{shiryaev}---using \eqref{eq:vanishing_jumps_condition} to be able to approximate the expectation of the processes through the expectation of their truncated alterations---guarantees that $Z_{\bullet \, \wedge \, \tau_a(Z)}$ is a martingale, and hence $Z$ is a local martingale. Thus, by \cite[Prop.~I.4.50d]{shiryaev}, $Z=Y$ follows.
\end{proof}

\subsubsection{Proof of Proposition \ref{thm:ContinuityOfSimpleIntegralsInM_1Topology} }\label{sec:6.5}

To prove Proposition \ref{thm:ContinuityOfSimpleIntegralsInM_1Topology}, we first need an auxiliary result. It will help us to construct convenient parametric representations of the simple integrals. For the definition of parametric representations, as they are employed in the definition of the $M_1$ metric, see Appendix \ref{app:J1_M1_tops}.

\begin{prop}[Parametric representation of simple integral] \label{prop:ParameterisationOfSimpleIntegrals} 
	Let $x \in \D_\R[0,T]$ be such that $x(\bullet \,-)$ is a simple left-continuous path as in \eqref{eq:ClassicalLeftContinuousPaths}. Let $y \in \D_\R[0,T]$. Given any parametric representation $(u,r) \in \Pi(x,y)$ with $u=(u^{(1)}, u^{(2)})$, we define \\[-4ex]
	\begin{align}
		\tilde{u}(z) &\vcentcolon= \sum_{i=1}^k u^{(1)}(\ubar{z}_{i+1})  \Big[u^{(2)}(z \wedge \bar{z}_{i+1}) - u^{(2)}(z \wedge \bar{z}_{i}))\Big] \label{eq:specific_form_param_representation_simple_deterministic}
	\end{align}
	where $\bar{z}_i \vcentcolon= \sup{} \{\bar z \in [0,\,1] : r(\bar z) = t_i\}$ and $\ubar{z}_i \vcentcolon= \inf{} \{\bar z \in [0,\,1] : r(\bar z) = t_i\}$. Then, it holds that $\big((u^{(1)},u^{(2)}, \tilde{u}),\, r\big) \, \in \, \Pi\big(x, \,y, \, \int_0^\bullet \, x(s \, -) \, \diff y(s)\big)$.
\end{prop}
\begin{proof}
	Let $f$ denote the simple integral of $x(\bullet \,-)$ with respect to $y$. Note that $\operatorname{Disc}(f) \subseteq \operatorname{Disc}(y)$. We begin the proof by showing that $((u^{(1)}, u^{(2)}, \tilde{u}),r)$ maps onto the completed graph $\Gamma_{(x,\, y,\,f)}$ (see Appendix \ref{app:J1_M1_tops} for a definition of the completed graph).
	Let first $s \in \operatorname{Disc}(x)^\complement \cap \operatorname{Disc}(y)^\complement$, then there exists $z \in [0,\,1]$ such that $((u^{(1)},u^{(2)}),r) (z) = ((x(s),y(s)),s)$. In particular, there exists $j \in \{1,...,k\}$ such that $s \in (t_j, t_{j+1})$ and by definition $z \in (\Bar{z}_j,\, \ubar{z}_{j+1})$. Then,\\[-2ex]
	\[
		 \tilde{u}(z)=\sum_{i=1}^j u^{(1)}(\ubar{z}_{i+1})  \left[u^{(2)}(z \wedge \bar{z}_{i+1}) - u^{(2)}(z \wedge \bar{z}_{i})\right] = \sum_{i=1}^j x(t_i) \left[y(s \wedge t_{i+1}) - y(s \wedge t_{i})\right] = f(s).
	\]
	Let now $s \in \operatorname{Disc}(x) \cap \operatorname{Disc}(y)$, implying $s = t_{j+1}$ for some $j \in \{1,\ldots,k\}$. In addition, let $\alpha \in [0,1]$ and choose $z \in [0,1]$ such that $r(z) = t_{j+1}$ and $(u^{(1)},u^{(2)})(z) = \alpha (x,y)(t_{j+1}-) + (1 - \alpha) (x,y)(t_{j+1})$. By definition, we have that $z \in [\ubar{z}_{j+1},\, \bar{z}_{j+1}]$. Therefore, we deduce
	\begin{align*}
		\tilde{u}(z) \; &= \; \sum_{i=1}^{j-1} u^{(1)}(\ubar{z}_{i+1}) \left[u^{(2)}(\bar{z}_{i+1}) - u^{(2)}( \bar{z}_{i})\right] \; + \; u^{(1)}(\ubar{z}_{j+1}) \left[u^{(2)}(z) - u^{(2)}( \bar{z}_{j})\right] \\
		&= \; \sum_{i=1}^{j-1} x(t_i)  \left[y(t_{i+1}) - y(t_{i})\right] \; + \;  x(t_j)  \left[\alpha y(t_{j+1}-) + (1 - \alpha) y(t_{j+1}) - y(t_{j})\right] \\
		&= \alpha  \Big(\sum_{i=1}^{j-1} x(t_i)  \left[y(t_{i+1}) - y(t_{i})\right] + x(t_j) \left[y(t_{j+1}-) - y(t_{j})\right]\Big) \\
		& \quad \, + (1-\alpha) \Big(\sum_{i=1}^{j-1} x(t_i)  \left[y(t_{i+1}) - y(t_{i})\right] + x(t_j)  \left[y(t_{j+1}) - y(t_{j})\right]\Big) \\
		&= \alpha f(t_{j+1}-) + (1-\alpha)f(t_{j+1})\, .
	\end{align*}
	The cases $s \in \operatorname{Disc}(x)^\complement \cap \operatorname{Disc}(y)$ and $s \in \operatorname{Disc}(x) \cap \operatorname{Disc}(y)^\complement$ follow analogously. Hence, $((u^{(1)}, u^{(2)},\tilde{u}),r): [0,1] \to \Gamma_{(x,y,f)}$ is surjective. Clearly, the continuity of $((u^{(1)}, u^{(2)},\tilde{u}),r)$ can easily be derived from the continuity of $((u^{(1)}, u^{(2)}), r)$. 
    Finally, $((u^{(1)}, u^{(2)},\tilde{u}),r)$ being non-decreasing with respect to the ordering on $\Gamma_{(x,y,f)}$ follows directly from the fact that $((u^{(1)},u^{(2)}),r) \in \Pi(x,y)$ is non-decreasing, and that $((u^{(1)},u^{(2)},\tilde{u}),r)$ is onto.
\end{proof}

We can now  prove the pathwise convergence result for simple It{\^o} integrals.
\begin{proof}[Proof of Proposition \ref{thm:ContinuityOfSimpleIntegralsInM_1Topology}]
	We only consider $d=1$. The general case follows analogously by first extending Proposition \ref{prop:ParameterisationOfSimpleIntegrals} in a straightforward way to a multidimensional case and then proceeding similarly to the one-dimensional proof. Let $\varepsilon > 0$. Since $x$ and $y$ have no common discontinuities, it is well-known that $(x_n,y_n) \xrightarrow[]{} (x,y)$ in $(\D_{\R^2}[0,T], \, \dM)$ (see e.g.~\cite[Thm.~12.6.1]{whitt}). Hence, there is a sequence of parametric representations $(u_n,r_n)=((u^{(1)}_n, u^{(2)}_n), r_n) \in \Pi(x_n,y_n)$ and $(u,r)=((u^{(1)}, u^{(2)}), r) \in \Pi(x,y)$ such that $|u_n-u|^*_1\vee |r_n-r|^*_1 \to 0$. Thus, there is an $N \in \N$ so that $|u_n-u|_1^* \; \leq \; \varepsilon [4\tilde{K}(|u^{(1)}|^*_1 + |u^{(2)}|^*_1 + 1)]^{-1}$ and $|u_n^{(1)}|^*_1 \leq |u^{(1)}|^*_1 + 1$ for every $n \geq N$. Employing Proposition \ref{prop:ParameterisationOfSimpleIntegrals}, we know that $((u^{(1)}, u^{(2)},\tilde{u}),r)$ and $((u^{(1)}_n, u^{(2)}_n,\tilde{u}_n),r_n)$ are parametrisations of $(x,y,\int_0^\bullet x(s-)\diff y(s))$ and $(x_n,y_n,\int_0^\bullet x_n(s-)\diff y_n(s))$ respectively, taking the specific form \eqref{eq:specific_form_param_representation_simple_deterministic}. We denote $t^n_1,...,t^n_{k_n}$ the discontinuity times of $x_n$, and $\ubar{z}^n_{i}, \bar{z}^n_{i}, \tilde{u}_n, u_n^{(1)},u_n^{(2)}$ the specific quantities from \eqref{eq:specific_form_param_representation_simple_deterministic} associated to $(x_n,y_n)$. Analogously, we omit the index $n$ to denote the respective quantities for $(x,y)$.

We only need to show $ |\tilde{u}_n-\tilde{u}|^*_1 \to 0$ as $n\to \infty$ in order to deduce $(x_n, y_n, \int_0^\bullet x_n(s-)\diff y_n(s))$ $\to (x, y, \int_0^\bullet x(s-)\diff y(s))$ in $(\D_{\R^3}[0,T], \dM)$. We will do this by induction over intervals to each of the jumps of $x$. Let $z \in [0,\,\bar{z}_2]$. We recall that $u_1 \equiv u^{(1)}(\ubar{z}_2)$ on $[0,\, \ubar{z}_2] $ and $r(\ubar{z}_2) = t_2$. Furthermore, by definition $t_1 = 0 = \bar{z}_1 = \bar{z}^n_1 $ and $u^{(2)}(\ubar{z}_1) = u^{(2)}(\bar{z}_1)$ since $x$ and $y$ do not jump at the same time by assumption. Now, we can write $|\tilde{u}(z) - \tilde{u}_{n}(z)|$ as
	$$ \Big|u^{(1)}(\ubar{z}_2)\left[u^{(2)}(z)-u^{(2)}(\bar{z}_1)\right] - \sum_{i=1}^{k(2,n)} u_n^{(1)}(\ubar{z}^n_{i+1})  \Big[u_n^{(2)}(z \wedge \bar{z}^n_{i+1}) - u_n^{(2)}(z \wedge \bar{z}^n_{i})\Big]\Big|$$
	where $t_1^n, \ldots, t_{k(2,n)}^n$ are jump times of $x_n$ with $k(2,n)$ being minimal such that $\Bar{z}_2  \leq  \bar{z}_{k(2,n)}^n$. We rewrite the first summand $\tilde{u}(z)$ using a telescoping sum to obtain that $\tilde{u}(z) - \tilde{u}_n(z)$ equals 
	\begin{align*}
		\sum_{i=1}^{k(2,n)-1} u^{(1)}(\ubar{z}_{2}) \Big[u^{(2)}(z \wedge \bar{z}^n_{i+1}) - u^{(2)}(z \wedge \bar{z}^n_{i})\Big] &- \sum_{i=1}^{k(2,n)-1}  u_n^{(1)}(\ubar{z}^n_{i+1})\Big[u_n^{(2)}(z \wedge \bar{z}^n_{i+1}) - u_n^{(2)}(z \wedge \bar{z}^n_{i})\Big] \\ &+  u^{(1)}(\ubar{z}_{2}) \Big[u^{(2)}(z \wedge \bar{z}^n_{1}) - u^{(2)}(z \wedge \bar{z}_{1})\Big] .
	\end{align*}
	The last term of this sum equals zero as $\bar{z}_1 = \bar{z}^n_1=0$. Hence, using the fundamental inequality $|ab-a_n b_n| \leq |a_n||b_n - b| + |b| |a_n - a|$ we obtain
	\begin{align*}
		|\tilde{u}(z) - \tilde{u}_n(z)| \; &\leq \; \sum_{i=1}^{k(2,n)-1} |u_n^{(1)}(\ubar{z}^n_{i+1})| \cdot \Bigl|u_n^{(2)}(z \wedge \bar{z}^n_{i+1}) - u_n^{(2)}(z \wedge \bar{z}^n_{i}) - \\[-10pt]
		&\hspace{200pt} u^{(2)}(z \wedge \bar{z}^n_{i+1}) + u^{(2)}(z \wedge \bar{z}^n_{i})\Bigr| \\[-6pt]
		&\qquad + \sum_{i=1}^{k(2,n)-1} \Big|u^{(2)}(z \wedge \bar{z}^n_{i+1}) - u^{(2)}(z \wedge \bar{z}^n_{i})\Big| \cdot |u_n^{(1)}(\ubar{z}^n_{i+1}) - u^{(1)}(\ubar{z}_{2})| \\
		&\leq \; 2  (k(2,n) - 1) \; (\,|u^{(1)}|^*_1 + |u^{(2)}|^*_1 + 1) \; |u_n - u|^*_1 \;\leq \; \varepsilon \;\; \text{for all } n\geq N.
	\end{align*}
    Towards the inductive step, let $\ell \in \{1, \ldots, k\}$ and assume that, for any $z \in [0,\, \bar{z}_\ell]$ and $n \geq N$,\\[-3ex]
	$$ |\tilde{u}(z) - \tilde{u}_n(z)| \; \leq \; 2 C (k(\ell,n)+\ell) \, |u - u_n|^*_1 \; \leq \; 4 \tilde{K} C |u - u_n|^*_1\; < \; \varepsilon,
	$$
	where $C := |u^{(1)}|^*_1 + |u^{(2)}|_1^* + 1$ and $k(\ell,n)$ is minimal such that $\bar{z}_\ell \leq \bar{z}^n_{k(\ell,n)}$. We will show that this bound also holds for every $z \in [0,\bar{z}_{\ell+1}]$. Let $z \in (\bar{z}_{\ell},\,\bar{z}_{\ell+1}]$ and note $\tilde{u}(z) - \tilde{u}_n(z)$ equals
	\begin{align*}
		&\sum_{i=1}^{k} u^{(1)}(\ubar{z}_{i+i}) \Big[u^{(2)}(z \wedge \bar{z}_{i+1}) - u^{(2)}(z \wedge \bar{z}_{i})\Big]  - \sum_{i=1}^{k_n} u_n^{(1)}(\ubar{z}^n_{i+1}) \left[u_n^{(2)}(z \wedge \bar{z}^n_{i+1}) - u_n^{(2)}(z \wedge \bar{z}^n_{i})\right] \\
		&= \,  \sum_{i=1}^{\ell} u^{(1)}(\ubar{z}_{i+i})  \Big[u^{(2)}(z \wedge \bar{z}_{i+1}) - u^{(2)}(z \wedge \bar{z}_{i})\Big]  - \sum_{i=1}^{k(\ell+1,n) -1} \hspace{-2ex}u_n^{(1)}(\ubar{z}^n_{i+1})  \Big[u_n^{(2)}(z \wedge \bar{z}^n_{i+1}) - u_n^{(2)}(z \wedge \bar{z}^n_{i})\Big]
	\end{align*}
	where we recall that $k_n$ is the total number of jumps of $x_n$ on $[0,\,T]$. Dividing these sums into parts gives us that $\tilde{u}(z)  - \tilde{u}_n(z)$ equals
	\begin{align*}
		&\sum_{i=1}^{\ell-1} u^{(1)}(\ubar{z}_{i+1})  \Big[u^{(2)}(z \wedge \bar{z}_{i+1}) - u^{(2)}(z \wedge \bar{z}_{i})\Big] - \sum_{i=1}^{k(\ell,n) -2} u_n^{(1)}(\ubar{z}^n_{i+1}) \Big[u_n^{(2)}(z \wedge \bar{z}^n_{i+1}) - u_n^{(2)}(z \wedge \bar{z}^n_{i})\Big]\\    
		& - u_n^{(1)}(\ubar{z}^n_{k(\ell,n)})  \Big[u_n^{(2)}(z \wedge \bar{z}_{\ell}) - u_n^{(2)}(z \wedge \bar{z}^n_{k(\ell,n)-1})\Big] +  u^{(1)}(\ubar{z}_{\ell+1})  \Big[u^{(2)}(z) - u^{(2)}(z \wedge \bar{z}_{\ell})\Big]  \\
		&- \sum_{i=k(\ell,n)}^{k(\ell+1,n) -1} \hspace{-2ex} u_n^{(1)}(\ubar{z}^n_{i+1})  \Big[u_n^{(2)}(z \wedge \bar{z}^n_{i+1}) - u_n^{(2)}(z \wedge \bar{z}^n_{i})\Big] -   u_n^{(1)}(\ubar{z}^n_{k(\ell,n)})  \Big[u_n^{(2)}(z \wedge \bar{z}^n_{k(\ell,n)}) - u_n^{(2)}(z \wedge \bar{z}_{\ell})\Big].\notag
	\end{align*}
	Since $z > \bar{z}_\ell$ and $\ubar{z}^n_{k(\ell,n)}\ge \bar{z}_\ell$ by definition, we may rewrite the first sum, and combine the second sum and the third term in order to obtain that $\tilde{u}(z)  - \tilde{u}_n(z)$ is equal to
	\begin{align}
		&\sum_{i=1}^{\ell-1} u^{(1)}(\ubar{z}_{i+1}) \Big[u^{(2)}(\bar{z}_\ell \wedge \bar{z}_{i+1}) - u^{(2)}(\bar{z}_\ell \wedge \bar{z}_{i})\Big]- \sum_{i=1}^{k(\ell,n) -1} \hspace{-2ex} u_n^{(1)}(\ubar{z}^n_{i+1})  \Big[u_n^{(2)}(\bar{z}_\ell \wedge \bar{z}^n_{i+1}) - u_n^{(2)}(\bar{z}_\ell \wedge \bar{z}^n_{i})\Big] \label{eq:A.1} \\
		&   +  u^{(1)}(\ubar{z}_{\ell+1})  \Big[u^{(2)}(z) - u^{(2)}(z \wedge \bar{z}_{\ell})\Big] - \sum_{i=k(\ell,n)}^{k(\ell+1,n) -1} \hspace{-2ex} u_n^{(1)}(\ubar{z}^n_{i+1}) \Big[u_n^{(2)}(z \wedge \bar{z}^n_{i+1}) - u_n^{(2)}(z \wedge \bar{z}^n_{i})\Big] \notag  \\
		&-  u_n^{(1)}(\ubar{z}^n_{k(\ell,n)}) \Big[u_n^{(2)}(z \wedge \bar{z}^n_{k(\ell,n)}) - u_n^{(2)}(z \wedge \bar{z}_{\ell})\Big] . \notag
	\end{align}
	\ \\[-1ex]
	By the induction hypothesis, each of the two sums in \eqref{eq:A.1} are bounded in absolute value by $2 C(k(\ell,n)+\ell)\, |u - u_n|_1^*$. Using a telescoping sum argument, we find that the absolute value of the remaining part is bounded by $2 C (k(\ell+1,n) - k(\ell,n) + 1) \, |u - u_n|_1^*$. Thus,
	\begin{align*}
		|\tilde{u}(z) - \tilde{u}_n(z)| \; &\leq \; 2 C(k(\ell,n)+\ell) \, |u - u_n|_1^* + 2 C(k(\ell+1,n) - k(\ell,n) +1 ) \, |u - u_n|^*_1\\
		&= \; 2 C(k(\ell+1,n)+ \ell+1) \,  |u - u_n|_1^* \; < \; \varepsilon,
	\end{align*}
for all $n\geq N$.\end{proof}

\begin{remark}\ 
	If we have another sequence of càdlàg functions $z_n$ such that $(y_n,z_n) \to (y,z)$ in $(\D_{\R^{2d}}[0,T], \dM)$, then it is immediate from the proof above that one obtains 
	$$\Bigl(x_n,y_n,z_n, \int_0^\bullet \! x_n(s-)\diff y_n(s)\Bigr) \longrightarrow  \Bigl(x,y,z, \int_0^\bullet \! x(s-) \diff y(s)\Bigr)$$
	on $(\D_{\R^{4d}}[0,\,T], \dM)$ as $n\to \infty$.
\end{remark}


\section{Proofs of the results from Section \ref{sect:martingale_integrators}} \label{sect:proofs_sect4}
Here we give the proofs of the results in Section \ref{sect:martingale_integrators}, in the same order as they were presented.

\subsection{Relatively compact local martingales in $M_1$}

We begin by proving an alternative tightness criterion for $J_1$ akin to Aldous' criterion \cite[Thm.~16.10]{billingsley}. Denote by $D\subseteq [0,\infty)$, with $D \neq \emptyset$, any set such that $\operatorname{sup}(D)=+\infty$.

\begin{prop}[An Aldous-type tightness criterion for $J_1$] \label{prop:alternative_$J_1$_tightness_criterion}
	A sequence of càdlàg adapted processes $(X^n)_{n\ge 1}$ on $(\Omega^n, \mathcal{F}^n, \mathbbm{F}^n, \Pro^n)$ is tight in $(\D_{\R^d}[0,\infty), \dJ)$ if
	\begin{enumerate}[(i)]
		\item for every $\varepsilon>0$ and $t>0$ there exists $K=K(\varepsilon,T)>0$ such that 
		$$ \sup{n\ge 1} \, \Pro^n(|X^n|^*_T > K) \; \le \; \varepsilon \, ;$$
		\item for all $\eta>0$ and $T\in D$ it holds
		$$ \lim\limits_{\theta \downarrow 0} \; \limsup\limits_{n\to \infty} \; \sup{\substack{\tau,\,\sigma \in \, \mathcal{T}^n \\ 0\, \le \, \tau\,  \le\,  \sigma\, \le \, (\tau+\theta)\wedge T}} \; \; \Pro^n\biggl( \, \sup{0\le s \le \theta} \, |X^n_{(\tau-s)\vee 0}-X^n_\tau|\wedge |X^n_{\tau}-X^n_{\sigma}| \, > \, \eta \biggr) \; = \; 0 $$
		where $\mathcal{T}^n$ denotes the set of all $\mathbbm{F}^n$-stopping times; and 
		\item for all $\eta>0$ and $T\in D$, 
		$$ \lim\limits_{\theta \downarrow 0} \; \limsup\limits_{n\to \infty} \; \Pro^n \biggl( \sup{0\le s \le \theta}\, |X^n_s-X^n_0| \vee |X^n_{T-s}-X^n_T|\, > \, \eta \biggr) \; = \; 0 \,.$$
	\end{enumerate}
\end{prop}
\begin{proof} Assume the conditions of Proposition \ref{prop:alternative_$J_1$_tightness_criterion} and, towards a contradiction, suppose the sequence $(X^n)_{n \ge 1}$ is not tight in $(\D_{\R^d}[0,\infty), \dJ)$. Consider a lack of the $J_1$ tightness condition based on $\hat w$ as specified in \cite[Thm.~12.4]{billingsley}. Clearly, this means that there exist $\eta,\varepsilon >0$, $T>0$ and a subsequence, which we also denote by $(X^{n})_{n\ge 1}$, such that 
	\begin{align}
		\Pro^n \left( \hat w_{n^{-1}}(X^n,X^n) \, > \, \eta \right) \; \ge 2\varepsilon \label{eq:altern_crit_$J_1$_tight_1}
	\end{align}
	for all $n\ge 1$, where we recall that $\hat w^T_\delta(X,X)= \operatorname{sup}\{ |X_t-X_s| \wedge |X_u-X_t| : 0\le s < t<u\le (s+\delta)\wedge T\}$. By monotonicity of $\hat w$ in $T$, without loss of generality we can assume $T \in D$. Now, according to condition (ii), there 
	is $\theta_1:=\theta(\eta/2,\varepsilon/2)$ such that, for all $n\ge 1$,
	\begin{align} 
		\Pro^n\Big( \, \sup{0\le s \le \theta_1} \, |X^n_{(\tau-s)\vee 0}-X^n_\tau|\wedge |X^n_{\tau}-X^n_{\sigma}| \, > \, \eta/2 \Big) \; \le \; \frac \varepsilon 2\; \label{eq:altern_crit_$J_1$_tight_2}
	\end{align}
	whenever $\tau, \sigma \in \mathcal{T}^n$ with $0\le \tau \le \sigma \le (\tau+ \theta_1)\wedge T$. Fix $q \in 2\N$ such that $q \theta_1> 4T$. Define $\mathbbm{F}^n$--stopping times $\tau^n_0 \equiv 0$ and $\tau^n_{k+1}:= \operatorname{inf}\{t>\tau^n_{k} : |X^n_t-X^n_{\tau^n_k}|> \eta/2\}$ and deduce that 
	\begin{align} 
		\Pro^n ( \tau^n_{k+2} \le T ,  \tau^n_{k+2}-\tau^n_{k} \le \theta_1) \le  \Pro^n\Big(  \sup{0\le s \le \theta_1}  |X^n_{(\tau -s)\vee 0}-X^n_{\tau}|\wedge |X^n_{\tau}-X^n_{\sigma}|  > \eta/2 \Big) \label{eq:altern_crit_$J_1$_tight_3}
	\end{align}
	for all $k\ge 1$ by applying the above to $\tau=\tau^n_{k+1}\wedge (\tau^n_k +\theta_1)\wedge T$ and $\sigma=\tau^n_{k+2}\wedge (\tau^n_k +\theta_1)\wedge T$. Making use of \eqref{eq:altern_crit_$J_1$_tight_2}-\eqref{eq:altern_crit_$J_1$_tight_3}, we find that 
	\begin{align*}
		T &\Pro^n( \tau^n_q \le T) \; \ge \; \E^n[ \tau^n_q \ind_{\{\tau^n_q \le T\}}] \; = \; \E^n\Big[ \sum_{k=1}^{q/2} (\tau^n_{2k}-\tau^n_{2k-2}) \ind_{\{\tau^n_q \le T\}} \Big] \\[-4pt]
		\;\;&\ge \; \E^n\Big[ \sum_{k=1}^{q/2} (\tau^n_{2k}-\tau^n_{2k-2}) \ind_{\{\tau^n_q \le T\, , \, \tau^n_{2k}-\tau^n_{2k-2} > \theta_1 \}} \Big] \\
		\;\;&\ge  \; \frac{q\theta_1}{2} \; \operatorname{sup}_{k=1,\, ...,\, q/2} \, \Pro^n(\tau^n_q \le T\, , \, \tau^n_{2k}-\tau^n_{2k-2} > \theta_1 ) \; \ge \; \frac{q\theta_1}{2} \; \Big( \Pro^n(\tau^n_q \le T) - \frac{\varepsilon}{2} \Big) 
	\end{align*}
	Thus, $\Pro^n(\tau^n_q \le T)\le \varepsilon$ by our choice of $q$ with $q \theta_1> 4T$. Combined with \eqref{eq:altern_crit_$J_1$_tight_1}, it follows that 
	\begin{align}
		\Pro^n(\tau^n_q> T \, , \, \hat w^T_{n^{-1}}(X^n,X^n)>\eta)  \;\ge  \; \varepsilon \label{eq:altern_crit_$J_1$_tight_6}
	\end{align} 
	for all $n\ge 1$. Now, as in \eqref{eq:altern_crit_$J_1$_tight_2} there exists $\theta_2:=\theta(\eta/2, \varepsilon/(2q))$ such that we have 
	\begin{align} 
		\Pro^n\Big( \, \sup{0\le s \le \theta_2} \, |X^n_{(\tau-s)\vee 0}-X^n_\tau|\wedge |X^n_{\tau}-X^n_{\sigma}| \, > \, \eta/2 \Big) \; \le \; \frac \varepsilon {2q} 
	\end{align}
	for all $n\ge 1$ and $\tau, \sigma \in \mathcal{T}^n$ with $0\le \tau \le \sigma \le (\tau+ \theta_2)\wedge T$, in particular 
	\begin{align} 
		\Pro^n\Big(  \sup{0\le s \le \theta_2} |X^n_{((\tau^n_{k} \wedge T) -s)\vee 0}-X^n_{\tau^n_k\wedge T}|\wedge |X^n_{\tau^n_k\wedge T}-X^n_{\tau^n_{k+1}\wedge (\tau^n_k+\theta_2)\wedge T}|  >  \eta/2 \Big)  \le  \frac \varepsilon {2q}\; \label{eq:altern_crit_$J_1$_tight_4}
	\end{align}
	for all $n\ge 1$ and $k\ge 1$. Denote $\Lambda^n_k$ the event inside the probability on the left-hand side of \eqref{eq:altern_crit_$J_1$_tight_4} and choose $n^{-1}\le \theta_2$. Then, we will show that 
	\begin{align} 
		\{\tau^n_q \, > \, T\} \; \cap \;\{ \hat w_{n^{-1}}(X^n,X^n)\, > \, \eta \} \; \subseteq \; \{\tau^n_q \, > \, T\} \; \cap \; \bigcup_{k=1}^{q} \Lambda^n_k  \; . \label{eq:altern_crit_$J_1$_tight_5}
	\end{align}
	Once this has been done, using \eqref{eq:altern_crit_$J_1$_tight_6} and then combining \eqref{eq:altern_crit_$J_1$_tight_5} and \eqref{eq:altern_crit_$J_1$_tight_4}, we immediately obtain the desired contradiction as follows:
	$$ \varepsilon \; \le \; \Pro^n ( \tau^n_q  >  T \, ,\, \hat w_{n^{-1}}(X^n,X^n) >  \eta) \; \le \; \sum_{k=1}^q \Pro^n (\Lambda^n_k) \; \le \; \frac {\varepsilon}{2} \; .$$
	Therefore, it only remains to show the inclusion \eqref{eq:altern_crit_$J_1$_tight_5}: let $\omega \in \{\tau^n_q  >  T\}  \cap \{ \hat w^T_{n^{-1}}(X^n,X^n) > \eta \}$ and consider $0\le s(\omega) \le t(\omega) \le u(\omega) \le (s(\omega)+n^{-1})\wedge T \le (s(\omega)+\theta_2)\wedge T$ such that 
	\begin{align}\label{eq:altern_crit_$J_1$_tight_7}
		|X^n_{s(\omega)}(\omega)-X^n_{t(\omega)}(\omega)| \, \wedge \, |X^n_{t(\omega)}(\omega)-X^n_{u(\omega)}(\omega)| \; > \;  \eta\; .
	\end{align}
	In view of $s(\omega)\le T < \tau^n_q(\omega)$, define $\ell:= \operatorname{max}\{0 \le k : \tau^n_k(\omega) < s(\omega)\} \le q-1$. From here on, to lighten the notation, we omit the dependence on $\omega$. We proceed by case distinction.
	
\emph{Case 1:} If $|X^n_{\tau^n_\ell}-X^n_s|>\eta/2$, then, by definition of $\ell$, it must hold that $\tau^n_{\ell+1}=s$ and $s<\tau^n_{\ell+2} \le t$. And clearly, either $|X^n_{\tau^n_{\ell+2}}-X^n_t|> \eta/2$ or $|X^n_{\tau^n_{\ell+2}}-X^n_u|> \eta/2$, since otherwise $|X^n_t-X^n_u|< \eta$ by the triangle inequality, which would violate \eqref{eq:altern_crit_$J_1$_tight_7}. Thus, we obtain $s<\tau^n_{\ell+3}\le u$, which yields $\ell\le q-4$ with $|\tau_{\ell+1}^n -\tau_{\ell+2}^n| \leq \theta_2 $ and $|\tau_{\ell+3}^n -\tau_{\ell+2}^n| \leq \theta_2 $, so one readily deduces from the definition of $\Lambda^n_{\ell+2}$ in \eqref{eq:altern_crit_$J_1$_tight_4} that $\omega \in \Lambda^n_{\ell+2}$.

\emph{Case 2:} If $|X^n_{\tau^n_\ell}-X^n_s|\le \eta/2$, then $|X^n_{\tau^n_\ell}- X^n_t| > \eta/2$ since otherwise, again by the triangle inequality, $|X^n_s- X^n_t|\le \eta$. Hence, we conclude $s<\tau^n_{\ell+1}\le t$ and either $|X^n_s- X^n_{\tau^n_{\ell+1}}|> \eta/2$ or $|X^n_t- X^n_{\tau^n_{\ell+1}}|> \eta/2$. In the first case, we proceed as above to obtain $s<\tau^n_{\ell+2}\le u$ and therefore $\omega \in \Lambda^n_{\ell+1}$. For the second case, it follows immediately that $s<\tau^n_{\ell+2}\le t$ and, by same argument as before, $s<\tau^n_{\ell+3}\le u$ with $\ell\le q-4$, which yields $\omega \in \Lambda^n_{\ell+2}$.
\end{proof}

Equipped with Proposition \ref{prop:alternative_$J_1$_tightness_criterion}, we can now proceed to the proof of Theorem \ref{cor:3.11}.
\begin{proof}[Proof of Theorem \ref{cor:3.11}] \ 
	It suffices to prove the assertion for $d=1$. Indeed, a sequence of $\R^d$-valued càdlàg functions $x_n$ converges to some càdlàg limit $x$ on $(\D_{\R^d}[0,\infty), \rho)$ where $\rho \in \{\dJ, \dM\}$, if and only if for every $\zeta \in \R^d$ it holds that $\zeta^T x_n$ converges to $\zeta^T x$ in $(\D_{\R}[0,\infty), \rho)$ (see e.g. \cite[Thm.~12.7.2]{whitt} for the $M_1$ topology).
	
	We argue by contraposition. To this end, suppose $\mathcal{A}$ is not relatively compact in $J_1$. In particular, we can then find a sequence $(X^n)_{n\ge 1} \subseteq \mathcal{A}$ which has no weakly convergent subsequence in $J_1$. Now, assume, for a contradiction, that $\mathcal{A}$ is relatively compact in $M_1$, implying the existence of a subsequence, also denoted by $(X^{n})_{n\ge 1}$, converging weakly in $M_1$ to some càdlàg process $X$. Let $D= [0,\infty) \setminus \operatorname{Disc}_\Pro(X)$ be the co-countable set of almost sure continuity points of $X$ and note that, since our subsequence $(X^{n})_{n\ge 1}$ is tight in $M_1$ due to Prokhorov's theorem, conditions (i) and (iii) of Proposition \ref{prop:alternative_$J_1$_tightness_criterion} are satisfied at all $T\in D$. Again according to Prokhorov's theorem, the latter sequence is, however, not tight in $J_1$, which then entails that condition (ii) of Proposition \ref{prop:alternative_$J_1$_tightness_criterion} must be violated. Thus, there exists a further subsequence---which we will still denote by $(X^n)_{n\ge 1}$---as well as $\eta_1>0$, $0<\varepsilon_1 <1$, $T \in D$ and $\mathbbm{F}^n$--stopping times $\tau^n,\sigma^n$ with $0\le \tau^n\le \sigma^n\le (\tau^n+n^{-1})\wedge T$ such that 
	$$ \Pro^n\left( \, \sup{0\le s \le n^{-1}} \, |X^n_{(\tau^n-s)\vee 0}-X^n_{\tau^n}|\wedge |X^n_{\tau^n}-X^n_{\sigma^n}| \, > \, \eta_1 \right) \; \; \ge \; \; 2\varepsilon_1$$
	for all $n\ge 1$. Then, without loss of generality we can assume that
	\begin{align} \Pro^n\left( \, \sup{0\le s \le n^{-1}} \, |X^n_{(\tau^n-s)\vee 0}-X^n_{\tau^n}|\wedge (X^n_{\sigma^n}-X^n_{\tau^n}) \, > \, \eta_1 \right) \; \; \ge \; \; \varepsilon_1 \label{eeq:4.17} \end{align}
	for all $n\ge 1$ (otherwise, consider $-X^{n}$). Denote the event inside the probability on the left-hand side of \eqref{eeq:4.17} by $\{\bar w^{\uparrow}_{n^{-1}}(X^n) > \eta_1\}$. From the steps above (towards a contradiction), the $X^n$ are $M_1$ tight, so the classical $M_1$ tightness criteria (see e.g.~Theorem \ref{thm:A.8} or \cite[Thm.~12.12.3]{whitt}) give that, for every $\varepsilon_2, \eta_2>0$, there exists $\theta:=\theta(\varepsilon_2, \eta_2) >0$ such that
	\begin{align} \Pro^{n} \left( w^{\prime \prime}_{\theta}(X^{n}) \, \le \, \eta_2 \right) \; \ge \; 1-\varepsilon_2, \label{eeq:4.18}\end{align}
	for all $n\ge 1$, where we refer to Definition \ref{def:A7} for the definition of $w''$. We now make use of the localised uniform integrability. Without loss of generality, for every $n\ge 1$, we can choose $\ell:=\ell(n)\ge 1$ large enough so that, for the stopping times $\rho^n:=\rho^n_\ell$, the family $(X^n_{\bullet \wedge \rho^n})_{n\ge 1}$ is uniformly integrable and \eqref{eeq:4.17} and \eqref{eeq:4.18} hold true for $X^n_{\bullet \wedge \rho^n}$ instead of $X^n$. Otherwise, repeat the above with $4\varepsilon_1$ and $\varepsilon_2/2$ instead of $2\varepsilon_1$ and $\varepsilon_2$, and choose to every $n\ge 1$ an $\ell \ge 1$ with $\Pro^n(\rho^n_\ell\le T) \le \varepsilon_1 \wedge (\varepsilon_2/2)$. By uniform integrability of the sequence $(X^n_{\bullet \wedge \rho^n})_{n\ge 1}$, we know in particular that we have uniform integrability of the family 
	$$ \left\{ \; X^{n}_{\mu \wedge \rho^n} \,= \, \E^{n}\Bigl[X^{n}_{T\wedge \rho^n} \,| \, \mathcal{F}^{n}_\mu \Bigr]  \; : \; k \ge 1 \text{ and } \mu \text{ is an $\mathbbm{F}^{n}$-stopping time bounded by $T$} \; \right\} .$$
	Thus, there exists $\delta>0$ such that, for all $n \ge 1$ and all $\mathbbm{F}^{n}$-stopping times $\rho^n$ bounded by $T$, 
	\begin{align} 
		\E^{n}  \Bigl[ \, |X^{n}_{\mu\wedge \rho^n}| \, \ind_{\Lambda_n} \, \Bigr] \; < \; \frac{\varepsilon_1\eta_1}{8} \label{eeq:4.19unifint}
	\end{align}
	whenever $\Lambda_n \in \mathcal{F}^{n}$ with $\Pro^{n}(\Lambda_n) < \delta$. Choose $\eta_2:=\eta_1 \varepsilon_1/4$ and $\varepsilon_2<(\varepsilon_1/4) \wedge \delta$. Fix $n\ge 1$ such that $2n^{-1}< \theta$ and, to simplify notation, set $Z:=X^n_{\bullet \wedge \rho^n}$ and $(\Omega,\mathcal{F},\mathbbm{F},\Pro):=(\Omega^n,\mathcal{F}^n,\mathbbm{F}^n,\Pro^n)$ as well as $\tau:=\tau^n$ and $\sigma:=\sigma^n$. Consider the sets 
	\begin{align*} 
		A:= \left\{ \bar w^{\uparrow}_{n^{-1}}(Z) > \eta_1 \right\} \cap \left\{w''_{\theta}(Z) \le  \eta_2 \right\} \;\, \text{ and } \;\,
		B :=  \left\{ \bar w^{\uparrow}_{n^{-1}}(Z) > \eta_1\right\}^\complement \cap \; \left\{w''_{ \theta}(Z)  \le \eta_2 \right\}
	\end{align*}
	along with $C:= \{w''_{\theta}(Z) >  \eta_2 \}$, noting that $A\cup B\cup C=\Omega$. Define $\Gamma :=\{\operatorname{sup}_{\, 0\le s \le n^{-1}} \, (Z_{\tau}-Z_{(\tau-s)\vee 0}) >  \eta_1\}$ and $\mathbbm{F}$--stopping times  
	$\tilde{\tau} := \tau \ind_{\Gamma}  +  T  \ind_{\Gamma^\complement}$, \, $\tilde{\sigma} := \sigma \ind_{\Gamma}  +  T \ind_{\Gamma^\complement}$. These are indeed stopping times since $\Gamma \in \mathcal{F}_\tau \subseteq \mathcal{F}_\sigma$, due to $\tau\le \sigma$. As the above stopping times are bounded by $T$, and $Z$ is a càdlàg martingale, by optional stopping we obtain 
	\begin{align} \E [ \, Z_{\tilde\tau} \, ] \; = \; \E[Z_0] \; = \;  \E \left[ \, Z_{ \tilde \sigma} \, \right] . \label{eeq:4.19} \end{align}
	We will show that this yields a contradiction. To see this, we proceed in three steps:
	
\emph{Step 1:} We begin by considering the set $A$, for which we claim that $A\cap \Gamma^\complement=\emptyset$. Indeed, on $\{ \bar w^{\uparrow}_{n^{-1}}(Z) > \eta_1 \}$, we have $\operatorname{sup}_{\, 0\le s \le n^{-1}} \, |Z_{(\tau-s)\vee 0}-Z_{\tau}| >  \eta_1$ and therefore, on $\{ \bar w^{\uparrow}_{n^{-1}}(Z) > \eta_1 \}\cap \Gamma^\complement$, it must hold $\operatorname{inf}_{\, 0\le s \le n^{-1}} \, (Z_{\tau}-Z_{(\tau-s)\vee 0}) <  -\eta_1$. Yet, this implies that there exists a random time $0\le s \le n^{-1}$ such that $Z_\tau < (Z_{(\tau-s)\vee 0}\wedge Z_{\sigma})-\eta_1$ and $\sigma-(\tau-s)\le 2n^{-1}<\theta$, since on $Z_\sigma>Z_\tau$ on $\{ \bar w^{\uparrow}_{n^{-1}}(Z) > \eta_1 \}$ by definition of the event after \eqref{eeq:4.17}. Consequently, we obtain $\lVert Z_{\tau} - [ Z_{(\tau-s)\vee 0}, Z_{\sigma}] \rVert > \eta_1 > \eta_2$, where $\norm{\cdot}$ is defined as in Definition \ref{def:A7}. This however yields that $\{w''_\theta(Z)\le \eta_2\} \cap \{ \bar w^{\uparrow}_{n^{-1}}(Z) > \eta_1 \}\cap \Gamma^\complement=\emptyset$ and so we deduce $A\cap \Gamma^\complement=\emptyset$. Now, with $A_1:=\{ \bar{w}^{\uparrow}_{n^{-1}}(Z)\, > \, \eta_1 \}$ and $A_2:=\{w^{''}_{\theta}(Z) \, \le \, \eta_2 \}$, we have
		\begin{align*} \Pro(A) \;= \; \Pro(A_1 \cap A_2) \; = \; \Pro(A_1) \, + \, \Pro(A_2) \, - \, \Pro(A_1\cup A_2) \; &\ge \; \varepsilon_1 \, + \, (1-\varepsilon_2) \, - \, 1 \; \ge \; \frac{3\varepsilon_1}{4},
		\end{align*} where the first inequality comes from \eqref{eeq:4.17}-\eqref{eeq:4.18} and the second is due to the fact that we chose $\varepsilon_2< \varepsilon_1/4$. Next, it follows from the definition of $\{ \bar w^{\uparrow}_{n^{-1}}(Z) > \eta_1 \}$ that we have $(Z_{\tilde\sigma}-Z_{\tilde\tau})=(Z_{\sigma}-Z_{\tau})>\eta_1$ on the set $A=A\cap \Gamma$, which combined with the above gives
		\begin{align} \E[ Z_{\tilde \sigma} \ind_A] \; \, > \, \; \E[ Z_{\tilde\tau} \ind_A] \; + \; \eta_1 \Pro(A) \; \ge \; \E[ Z_{\tilde\tau} \ind_A] \; + \; \frac{3\varepsilon_1\eta_1}{4}. \label{eeq:4.20} \end{align}
		
		\emph{Step 2:} On $B\cap \Gamma^\complement$, the definition of $\tilde\tau, \tilde \sigma$ yields $Z_{\tilde \sigma}-Z_{\tilde \tau}=Z_T-Z_T=0$. Consider now the set $B\cap \Gamma$, on which $\tilde\tau=\tau, \tilde\sigma=\sigma$ and there exists a random time $0 \le s \le n^{-1}$ such that $Z_{\tau}-Z_{(\tau-s)\vee 0} >\eta_1>\eta_2$. Since $B\subseteq\{w''_\theta(Z)\le \eta_2\}$, we deduce  $Z_\sigma\ge Z_\tau- \eta_2 = Z_\tau - \eta_1\varepsilon_1/4$. Otherwise there would exist $\omega \in B\subseteq\{w''_\theta(Z)\le \eta_2\}$, evaluated at which, it would hold $Z_{\tau} > (Z_{\tau-s} \vee Z_{\sigma})+\eta_2$ and this yields $\lVert Z_\tau - [Z_{\tau-s}, Z_\sigma] \rVert> \eta_2$. Due to $\sigma-(\tau-s)\le 2n^{-1}< \theta$, the latter would imply $w''_\theta(Z)>\eta_2$, which is a contradiction to $\omega \in B \subseteq\{w''_\theta(Z)\le \eta_2\}$. Hence,
		\begin{align} \E \left[ \, Z_{\tilde \sigma} \, \ind_B \, \right] \; \ge \; \E \left[ \, Z_{\tilde \tau} \, \ind_B\, \right] \; - \; \frac{\varepsilon_1\eta_1}{4}.\label{eeq:4.21}\end{align}
		
	\emph{Step 3:} Lastly, recalling $C=\{w''_{\theta}(Z) >  \eta_2 \}$, since $\Pro(C)< \varepsilon_2 \le \delta$ due to \eqref{eeq:4.18}, we obtain, on behalf of the uniform integrability bound (\ref{eeq:4.19unifint}), the inequality
		\begin{align*}
			\E \left[  (Z_{\tilde \tau}-Z_{\tilde\sigma} )  \ind_C  \right] \, \le \, \E \left[  |Z_{\tilde \sigma}| \ind_C  \right] \, + \, \E \left[ |Z_{\tilde \tau}|  \ind_C  \right] 
			\, \le \, \frac{\varepsilon_1\eta_1}{8} \, + \, \frac{\varepsilon_1\eta_1}{8} \, = \, \frac{\varepsilon_1 \eta_1}{4}
		\end{align*}
		and, in turn, we get
		\begin{align} \E \left[ \, Z_{\tilde \sigma} \, \ind_C \, \right] \; \ge \; \E \left[ \, Z_{\tilde \tau} \, \ind_C \, \right] \; - \; \frac{\varepsilon_1\eta_1}{4}.\label{eeq:4.22}\end{align}
		
	Finally, bringing together (\ref{eeq:4.20}), (\ref{eeq:4.21}) and (\ref{eeq:4.22}), we conclude that $\E[ \, Z_{\tilde{\sigma}} ] \ge  \E [ \, Z_{\tilde\tau} ]  + (\varepsilon_1\eta_1)/4$, which yields the desired contradiction to (\ref{eeq:4.19}). Hence, the family $\{X^i : i \in I\}$ cannot be relatively compact for the $M_1$ topology.
\end{proof}

 \subsection{Proof of Proposition \ref{prop:propagation_of_local_martingale_property}}
 
	\begin{proof}[Proof of Proposition \ref{prop:propagation_of_local_martingale_property}] 
		Denote $\mathbbm{F}^n:=(\mathcal{F}^n_t)_{t\ge 0}$ the natural filtrations generated by $(H^n,M^n)$. Without loss of generality, we may assume the $M^n$ to be martingales. Indeed, by the local martingale property, there exist $\mathbbm{F}^n$-stopping times $\sigma^n$ such that $M^n_{\bullet \wedge \sigma^n}$ are martingales and $\Pro^n(\sigma^n\le T) \to 0$ as $n\to \infty$ for all $T>0$. Hence, $M^n_{\bullet \wedge \sigma^n}\Rightarrow X$ on $(\D_{\R^d}[0,\infty),\dM)$ since convergence in the $M_1$ topology on $\D_{\R^d}[0,\infty)$ is uniquely characterised by $M_1$ convergence on almost all compacts. Next, condition \eqref{eq:localised_uniform_integrability_of_martingale_jumps} ensures the localised uniform integrability of the local martingales $M^n$ on compact time intervals (as in Definition \ref{defn:local_UI}), and therefore, by Corollary \ref{cor:$M_1$_conv_impl_$J_1$_conv_martingales}, there is convergence $(H^n,M^n)\Rightarrow (H,X)$ on $(\D_{\R^d}[0,\infty),\dM)\times (\D_{\R^d}[0,\infty),\dJ)$. For any $\alpha \in \D_{\R^d}[0,\infty)$ and $c>0$ define $\rho_c(\alpha):= \operatorname{inf}\{t>0: |\alpha(t)|\ge c \text{ or } |\alpha(t-)|\ge c\}$. As described at the beginning of the proof of \cite[Prop.~IX.1.17]{shiryaev}, there exists a co-countable subset $C\subseteq [0,\infty)$ such that for all $c\in C$ it holds that $(H^n,M^n_{\bullet \wedge \rho_c(M^n)})\Rightarrow (H,X_{\bullet \wedge \rho_c(X)})$ on $(\D_{\R^d}[0,\infty),\dM)\times (\D_{\R^d}[0,\infty),\dJ)$. Finally, let $c\in C$. Following the argument in the proof of \cite[Prop.~IX.1.12]{shiryaev} (with the definition of quantities as in the proof of \cite[Prop.~IX.1.10]{shiryaev}) we first deduce the uniform integrability on compacts of $X_{\bullet \wedge \rho_c(X)}$ from \eqref{eq:localised_uniform_integrability_of_martingale_jumps}, and then its martingale property in terms of the filtration generated by $(H,X)$. 
\end{proof}

\section{Proofs of the results from Section \ref{sect:CTRW_applications}}

\subsection{Proof of Proposition \ref{prop:CTRW_are_localised_UI}} 

\begin{proof}[Proof of Proposition \ref{prop:CTRW_are_localised_UI}]
    Let $\tau^n_c$ be stopping times defined as in the definition of \eqref{eq:Mn_An_condition}, and let $K>0$. Furthermore, let $\rho^n_c:= \operatorname{inf}\{t>0: n^{-\beta}N_{nt}>c\}$. The martingality of the processes $X^n$ is shown in \cite[Prop.~3.3]{andreasfabrice}. Clearly, the stopping times $\sigma^n_c:= \tau^n_c\wedge \rho^n_c$ are a localising sequence as in Definition \ref{defn:local_UI}. For $0\le t \le T$ and $K>2c$ it holds that 
   \begin{align*}
       \E \Big[|X^n_{t\wedge \sigma^n_c}| \ind_{\{|X^n_{t\wedge \sigma^n_c}|>K\}}\Big] \; \le \; c \, \Pro \big(|X^n|^*_{T}>K\big) \,+ \, \E \Big[|\Delta X^n_{t\wedge \sigma^n_c}| \ind_{\{|\Delta X^n_{t\wedge \sigma^n_c}|>K/2\}}\Big].
   \end{align*}
   The first summand on the right-hand side converges to zero, uniformly in $n$, as $K\to \infty$ due to the tightness of the $X^n$ on the Skorokhod space. For the second summand, we note that 
     $$\E \Big[|\Delta X^n_{t\wedge \sigma^n_c}| \ind_{\{|\Delta X^n_{t\wedge \sigma^n_c}|>K/2\}}\Big] \, \le \, \frac 1 {n^{-\frac \beta \alpha}} \sum_{j=1}^{cn^\beta} \E\Big[ |\theta_j| \ind_{\{ |\theta_j|> n^{\frac \beta \alpha} K/2\}}\Big] \, = \, \frac {cn^\beta} {n^{-\frac \beta \alpha}} \E\Big[ |\theta_0|^{\alpha-\gamma} \ind_{\{ |\theta_0|> n^{\frac \beta \alpha} K/2\}}\Big] $$
   with $\gamma:=\alpha-1>0$. Then, proceed as in the proof of \cite[Prop.~5.3]{andreasfabrice} to obtain
   $$ \operatorname{sup}_{0\le t\le T,\;  n\ge 1} \, \E \Big[|\Delta X^n_{t\wedge \sigma^n_c}| \ind_{\{|\Delta X^n_{t\wedge \sigma^n_c}|>K/2\}}\Big]  \; \le \; \frac{32^\gamma c\alpha}{\gamma K^{\gamma}} \; \to \;  0 $$
   as $K\to \infty$, which yields the desired localised uniform integrability.
\end{proof}

\subsection{Construction of the counterexample in Proposition \ref{counterexample:3.4}} \label{sec:4.1}

\begin{proof}[Proof of Proposition \ref{counterexample:3.4}] We split the construction into four steps.\\
	\phantom{ee} \emph{Step 1:} We can rewrite the $X^n$ as\\[-4ex]
	\begin{align*} X^n_t \; = \; \frac{1}{n^{\frac1\alpha}} \; \frac{1}{c_0+c_1} \; \sum_{k=1}^{\lfloor nt \rfloor} \; (c_0\theta_k + c_1 \theta_{k-1}) \; = \; \frac{1}{n^{\frac1\alpha}} \;   \sum_{k=1}^{\lfloor nt \rfloor} \; \theta_k \; - \; \frac{1}{n^{\frac1\alpha}}\, \frac{c_1}{c_0+c_1} \, ( \theta_{\lfloor n t\rfloor} - \theta_0) 
	\end{align*}
	and we define $A^n_t:=n^{-\frac1\alpha}  \sum_{k=1}^{\lfloor nt \rfloor} \theta_k$ and $B^n_t:=c_1 n^{-\frac1\alpha} (c_0+c_1)^{-1}  ( \theta_{\lfloor n t\rfloor} - \theta_0)$. It is well-known that the $A^n$ converge weakly in the $J_1$ topology to an $\alpha$-stable Lévy process $A$ if $\E[\theta_0]=0$ (see \eqref{eq:mov_av_scaling_limit}). As uncorrelated moving averages, the $A^n$ satisfy \eqref{eq:Mn_An_condition} according to Theorem \ref{thm:CTRW_has_GD}. By Proposition \ref{prop:3.3} and Theorem \ref{thm:3.19}, for every sequence of càdlàg processes $H^n$ adapted to the natural filtration of the $X^n$ and converging weakly to the zero process, this implies $\int_0^\bullet H^n_{s-} \diff A^n_s \Rightarrow 0$. In particular, its f.d.d.~converge to zero. Towards a contradiction, suppose the f.d.d.~of $(\int_0^\bullet H^n_{s-} \diff X^n_s)_{n\ge 1}$ are tight, and let $D\subseteq [0,\infty)$ be countable. Then, by Prokhorov's theorem and a diagonal sequence argument, we can extract a subsequence for which the f.d.d. converge on $D$. Without loss of generality, assume that this holds for the original sequence $(\int_0^\bullet H^n_{s-} \diff X^n_s)_{n\ge 1}$. The continuous mapping theorem now implies that the f.d.d. of $\int_0^\bullet  H^n_{s-} \diff B^n_s = \int_0^\bullet H^n_{s-} \; \diff A^n_s  - \int_0^\bullet  H^n_{s-} \diff X^n_s $ converge on $D$. Fix $s \in D$. The $X^n$ being càdlàg pure jump processes, we can easily express the integrals in terms of \\[-4ex]
	\begin{align}
		\int_0^s \, H^n_{r-} \; \diff B^n_r \; = \; \frac{c}{n^{\frac 1 \alpha}}\;  \sum_{k=1}^{\lfloor ns \rfloor} \, H^n_{\frac{k}{n}}\; ( \theta_{k} - \theta_{k-1})\; = \; \bar{B}_n \; + \; \hat{B}_n \label{eq:5.10}
	\end{align}
	for $n$ large enough, where $c:=c_1(c_0+c_1)^{-1}$ and \\[-2ex]
    $$\bar{B}_n \, := \, \frac{c}{n^{\frac 1 \alpha}}\, \big( H^n_{\frac{\lfloor n s\rfloor}{n}} \theta_{n} \; - \; H^n_{\frac{1}{n}} \theta_0\big)  \qquad \text{and} \qquad \hat{B}_n \, := \, \frac{c}{n^{\frac 1 \alpha}}\;  \sum_{k=1}^{\lfloor ns \rfloor -1} \, \theta_k \; \big( H^n_{\frac k n} - H^n_{\frac{k+1}{n}}\big).$$
    Clearly, $\bar{B}_n$ converges to zero in probability, which yields that $\hat{B}_n$ converges weakly. Hence, it is enough to find an (adapted) sequence of $H^n$ converging weakly to zero so that the $\hat{B}_n$ are not tight in $\R$. It will turn out that we can find convenient processes $H^n$ preserving enough of the variation of the $\theta_k$ through integration and therefore turning $\hat{B}_n$ into a weighted sum of absolute values of a `sufficient' number of the $\theta_k$.
	
	\phantom{ee} \emph{Step 2: the integrators}.
	We define i.i.d.~random variables $W_k$, $k\ge 1$, which are distributed according to a Pareto distribution with parameters $\alpha>0$ and $x_{\text{min}}>0$. We recall that these random variables have cumulative distribution function $F_{\alpha,x_{\text{min}}}(x) = 1 - ( x_{\text{min}}/x)^{\alpha}$ if $x \ge x_{\text{min}}$, and $F_{\alpha,x_{\text{min}}}(x)=0$ otherwise.
	In order to center the random variables $W_k$, we symmetrise them by multiplying with i.i.d. Rademacher random variables, i.e. we set 
	$\theta_k :=  \xi_k  W_k$, $k \ge 0$, where $\Pro(\xi_1=1)=\Pro(\xi_1=-1)=1/2$ and the $\xi_k$ are i.i.d. and independent of the $W_k$. The $\theta_k$ are symmetric around zero and in the normal domain of attraction of an $\alpha$-stable law $\textbf{S}_\alpha(1, 0, 0)$. To see this, one may check the assumptions of \cite[Thm.~4.5.2]{whitt}.

	\phantom{ee} \emph{Step 3: the integrands}.
	Let $\varepsilon>0$ such that $\alpha^{-1}+ \varepsilon < 1$. For $n \ge 1$, define a stochastic càglàd pure jump process $H^n$ inductively by $H_t^n\equiv 0$ for $t \in [0, 2/n]$
	and 
	\begin{align*} H^n_t \; := \; \begin{cases}
			0, \qquad &\text{if } \operatorname{sgn}(\theta_k)\neq \operatorname{sgn}(\theta_{k-1}) \text{ and } H^n_{k/n}=- \frac{1}{n^\varepsilon} \, \operatorname{sgn}(\theta_{k-1}) \\
			- \frac{1}{n^\varepsilon} \, \operatorname{sgn}(\theta_{k}), \qquad &\text{if } \operatorname{sgn}(\theta_k)\neq \operatorname{sgn}(\theta_{k-1}) \text{ and } H^n_{k/n}=0 \\
			H^n_{k/n}, \qquad &\text{if } \operatorname{sgn}(\theta_k)= \operatorname{sgn}(\theta_{k-1})
	\end{cases} \end{align*}
	for $t \in (k/n,(k+1)/n]$, $k \ge 2$, where we note that $H^n_{k/n} \in \{0, -n^{-\varepsilon} \operatorname{sgn}(\theta_{k-1})\}$ and they are adapted to the natural filtration generated by the $\theta_k$. Furthermore, since $H^n_t \in \{0, \pm n^{-\varepsilon}\}$ for all $n\ge 1$ and $t\ge 0$, the sequence $H^n$ tends to zero almost surely uniformly on $[0,\infty)$ as $n\to \infty$. The choice of the $H^n$ is precisely such that 
	\begin{align} \theta_k \, (H^n_{k/n} - H^n_{(k+1)/n})\; = \; \begin{cases}
			\frac{1}{n^\varepsilon} \, |\theta_k|, \qquad &\text{if } \operatorname{sgn}(\theta_k)\neq \operatorname{sgn}(\theta_{k-1}) \\
			0, \qquad &\text{if } \operatorname{sgn}(\theta_k) = \operatorname{sgn}(\theta_{k-1}) 
		\end{cases} \label{eq:countex_summands}
	\end{align}
	as shows a simple calculation and it will turn out that this is sufficient for $\hat{B}_n$ in (\ref{eq:5.10}) to explode as $n\to \infty$. In other words, the $H^n$ are constructed in such a way that the stochastic integrals $\int_0^\bullet H^n_{r-} \diff B^n_{r}$ `pick up' enough of the total variation of the process $t\mapsto \theta_{\lfloor n t\rfloor}$ inducted by jumps of opposite direction (relative to the respective previous jump).
	
	\phantom{ee} \emph{Step 4: deriving a contradiction}.
	From the tightness of $(\hat{B}_n)_{n\ge 1}$, to every $\delta< \frac{1}{2}$, we obtain the existence of a $K_\delta>0$ such that \\[-3ex]
	\begin{align} \Pro \Big( \, \frac{c}{n^{\frac 1 \alpha+\varepsilon}}\;  \sum_{k=2}^{\lfloor n s\rfloor  -1} \, |\theta_k| \, \ind_{\{\operatorname{sgn}(\theta_k)\, \neq \, \operatorname{sgn}(\theta_{k-1})\}} \; > \; K_\delta \Big) \; \le \; \delta \label{eq:5.12}
	\end{align}
	for all $n\ge 1$, due to \eqref{eq:countex_summands}. Further, for $n\ge 1$, let 
	$$ \Lambda_n \; \; \; := \;\; \; \union{\substack{k_1,...,k_{m} \, \in \, \{2,...\lfloor n s\rfloor -1\}\\ m \, \ge \, \lfloor \frac{\lfloor n s\rfloor-1}{2}\rfloor }}{} \; \; \intersection{j=1}{m} \; \big\{ \operatorname{sgn}(\theta_{k_j}) \; \neq \; \operatorname{sgn}(\theta_{k_j-1}) \big\}$$
	and it is not hard to see that $\Pro(\Lambda_n)\ge \frac12$ for $n\ge 3/s$. Next, define random times $\tau_0 \equiv 0$ and $\tau_j  := \operatorname{inf}\, \{ \, k>\tau_{j-1} \,  :\,   \operatorname{sgn}(\theta_{k}) \neq \operatorname{sgn}(\theta_{k-1})\}$, $j\ge 1$. On $\Lambda_n$, we have $\tau_j \in \{ 2,...,\lfloor n s\rfloor-1\}$ for all $j=1,..., \lfloor \frac{\lfloor n s\rfloor-1}{2} \rfloor$. Consequently, the left-hand side of (\ref{eq:5.12}) is greater than or equal to
	\begin{align}
	\Pro \biggl(  \Bigl\{ \sum_{k=1}^{\lfloor \frac{\lfloor n s\rfloor-1}{2} \rfloor } \, |\theta_{\tau_k}|  \; > \; \frac{n^{\frac 1 \alpha+\varepsilon}\, K_\delta}{c}   \Bigr\}  \, \cap \,  \Lambda_n \biggr) \; &\ge \; \Pro  \biggl(   \intersection{k=1}{\lfloor \frac{\lfloor n s\rfloor-1}{2} \rfloor} \, \Bigl\{ \, |\theta_{\tau_k}|  \; > \; \frac{n^{\frac 1 \alpha+\varepsilon}\, K_\delta}{\lfloor \frac{\lfloor n s\rfloor-1}{2} \rfloor \, c}  \, \Bigr\} \; \cap \; \Lambda_n  \biggr) \notag \\
		&\ge \; \Pro \biggl(  \intersection{k=2}{\lfloor n s\rfloor-1} \, \Bigl\{ \, |\theta_{k}|  \; > \; \frac{n^{\frac 1 \alpha+\varepsilon}\, K_\delta}{\frac{n s-3}{2} \, c}  \, \Bigr\} \; \cap \; \Lambda_n  \biggr)  . \label{eq:5.13}
	\end{align}
	Since $\varepsilon$ is such that $\alpha^{-1}+\varepsilon<1$, it follows $n^{\frac1\alpha +\varepsilon}(ns-3)^{-1} \to 0$ as $n\to \infty$. Choose $N\ge 1$ large enough that $C_n := \; 2n^{\frac 1 \alpha+\varepsilon} K_\delta (ns-3)^{-1}c^{-1} < 1$ for all $n\ge N$. Then, for every $k \ge 1$, we have $\{|\theta_k| > C_n\} \supseteq \{ W_k > 1\}$ and the latter set is an almost sure set due to the distributional assumption on the $W_k$. Thus, the set $\bigcap_{k=2}^{n-1} \{ |\theta_{k}| > C_n\}$ has full probability for all $n\ge N$. Therefore, (\ref{eq:5.13}) yields \\[-2ex]
	$$ \frac{1}{2} \; > \; \Pro \Biggl( \, \intersection{k=2}{n-1} \, \biggl\{ \, |\theta_{k}|  \; > \; \frac{n^{\frac 1 \alpha+\varepsilon}\, K_\delta}{\frac{ns-3}{2} \, c}  \, \biggr\} \; \cap \; \Lambda_n \, \Biggr) \; = \; \Pro \left( \, \Lambda_n \, \right) \; \ge \; \frac{1}{2}$$
	for all $n\ge N$, yielding the desired contradiction.
\end{proof}


\appendix

\section{Appendix: Skorokhod's $J_1$ and $M_1$ topologies}\label{app:J1_M1_tops}

This brief appendix covers the fundamentals of the $J_1$ and $M_1$ topologies, introduced in \cite{skorokhod}. 


\begin{defi}[$J_1$ metric]\label{def:$J_1$_metric} For any $x,y \in \D_{\R^k}[0,T]$ we define
	$$\dJ(x,y) \; := \; \underset{\lambda \in \Lambda}{\operatorname{inf}} \; \left\{\,  |\lambda - \operatorname{Id} |_T^* \; \vee \; \; |x \circ \lambda - y|^*_T \,  \right\}$$
	on $\D_{\mathbbm{R}^k}[0,T]$, where $|z|_T^*:= \operatorname{sup}_{0\, \le \, t \, \le \, T} \, |z(t)|$ denotes the respective supremum norm and $\Lambda$ is the set of increasing homeomorphisms $\lambda: [0,T] \to [0,T]$ (i.e., `time-changes').
\end{defi}


For a càdlàg function $x \in \D_{\mathbbm{R}^k}[0,T]$ we define the completed graph as the set
$$ \Gamma_x \; := \; \left\{ (z,t) \in \R^k \times [0,T] \; : \; z = \alpha x(t-) + (1-\alpha) x(t), \; \alpha \in [0,1]\right\}$$
where $x(t-):= \lim_{s \uparrow t} \, x(s)$ and $x(0-):=x(0)$.
The completed graph is a connected subset of the plane $\R^{k+1}$ containing the line segments joining $(x(t),t)$ and $(x(t-),t)$ for all discontinuity points $t$ of $x$.  We can introduce a total order $\preceq$ given by
\begin{align*}\label{def:$M_1$_metric}
	(z_1, t_1)  \preceq (z_2, t_2) \;\, :\Leftrightarrow \;\, t_1 < t_2  \;\; \text{or} \; \,  \left[  t_1=t_2 \;\, \text{and} \; |x^{(i)}(t_1-)-z_1^{(i)}|  \le  |x^{(i)}(t_1-)-z_2^{(i)}|\; \forall i\right]
\end{align*}
for every $(z_j,t_j) \in \Gamma_x$, $j=1,2$, and where $x=(x^{(1)},...,x^{(k)})$. Based on this ordering, we define a \textit{parametric representation} as a continuous, non-decreasing and surjective map $(u,r):[0,1] \to \Gamma_x$, and we denote by $\Pi(x)$ the set of all parametric representations of $\Gamma_x$. 

\begin{defi}[$M_1$ Metric]\label{def:$M_1$_metric} For any $x_1, x_2 \in \D_{\R^k}[0,T]$ we set
	$$\dM(x_1,x_2) \; := \; \underset{(u_j,r_j) \; \in \; \Pi(x_j),\, j=1,2}{\operatorname{inf}} \; \left( \;  |u_1-u_2|_1^* \, \vee \, |r_1-r_2|^*_1 \; \right). $$
\end{defi}
The extension of both metrics to a metric on $\D_{\R^k}[0,\infty)$ is achieved as follows: convergence on $\D_{\R^k}[0,\infty)$ holds if and only if there is convergence in the metric on $\D_{\R^k}[0,T]$ for almost every $T>0$. We thus set
\begin{equation} \rho(x_1,x_2) \; := \; \int_0^\infty \; \e^{-T}\, \left[\rho^{[0,T]}\left(x_1|_{[0,T]}\, ,\, x_2|_{[0,T]}\right) \wedge 1\right] \; \diff T \label{eq:$M_1$_metric_real_line}
\end{equation}
with $\rho \in \{\dJ, \dM\}$, where $ \rho^{[0,T]}$ denotes the respective metric on $[0,T]$, and $x_1|_{[0,T]}, \,x_2|_{[0,T]}$ denote the restrictions of $x_1,x_2$ on $[0,T]$.  Both metrics induce a \emph{separable topology} strictly weaker than the uniform topology and it holds
$$ \dM(x_1, x_2) \; \le \; \dJ(x_1, x_2) \; \le \; |x_1 - x_2|^*_T$$
for all $x_1, x_2 \in \D_{\R^k}[0,T]$. We shall note that if $x_n \to x$ in $(\D_{\R^k}[0,\infty), \rho)$, $\rho \in \{\dJ, \dM\}$, and the limit $x$ is continuous, then the convergence holds in the uniform topology.
Although the Skorokhod space equipped with the above $J_1$ or $M_1$ metrics is not complete, there exist complete metrics generating these very topologies. Unfortunately, the $J_1$ and $M_1$ spaces are \emph{not} topological vector spaces since in general they fail to make addition a continuous operation. However, if the convergence is in a stricter sense, addition preserves convergence.
\begin{prop} \label{prop:A3}
	If $x_n \to x$ in $(\D_{\R^{k \times \ell }}[0,T], \rho)$, $\rho \in \{ \dJ, \dM\}$, linear combinations of the components preserve convergence, i.e., $x_n \eta \to x \eta$ in $(\D_{\R^k}[0,T], \rho)$ for all $\eta \in \R^\ell $.
\end{prop}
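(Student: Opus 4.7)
\bigskip

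\noindent \textbf{Proof plan for Proposition \ref{prop:A3}.} I will handle the J1 case quickly and then focus on M1, where the nontrivial point is to realise that a parametric representation of $x_n$ induces a parametric representation of $x_n\eta$ by right-multiplication with $\eta$.

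For $\rho = \dJ$, given $\varepsilon > 0$ and a time change $\lambda_n \in \Lambda$ nearly attaining the infimum in $\dJ(x_n, x)$, I will use the trivial estimate
\[
|x_n\eta \circ \lambda_n - x\eta|^*_T \; \le \; C\, |\eta| \, |x_n \circ \lambda_n - x|^*_T
\]
for a constant $C$ depending only on the chosen matrix-vector norm, yielding $\dJ(x_n\eta, x\eta) \le C|\eta|\dJ(x_n,x) + \text{error} \to 0$. The passage from $[0,T]$ to $[0,\infty)$ will follow from the definition of the metric on $[0,\infty)$ via restrictions in \eqref{eq:M1_metric_real_line}, by dominated convergence.

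For $\rho = \dM$, the key observation is the following: if $(u,r) \in \Pi(x)$ with $u : [0,1] \to \R^{k\times\ell}$, then the map $(u\eta, r) : [0,1] \to \R^k \times [0,T]$ lies in $\Pi(x\eta)$. I will verify the three defining properties of a parametric representation. Continuity is immediate since matrix-vector multiplication is continuous. For surjectivity onto $\Gamma_{x\eta}$, I will use that the completed graph $\Gamma_x$ consists of line segments of the form $\{(\alpha x(t-) + (1-\alpha)x(t), t) : \alpha \in [0,1]\}$ at each jump, and that right-multiplication by $\eta$ sends this onto $\{(\alpha x(t-)\eta + (1-\alpha)x(t)\eta, t) : \alpha \in [0,1]\}$, which is exactly the segment in $\Gamma_{x\eta}$ at time $t$. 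For the non-decreasing property under the M1 ordering $\preceq$, I will observe that on such a segment, both in $\Gamma_x$ and $\Gamma_{x\eta}$, the distance from the left-limit in each coordinate equals $(1-\alpha)$ times the magnitude of the corresponding jump, so the ordering is controlled by the single parameter $\alpha$ and is preserved under $\eta$-multiplication; the edge case $\Delta(x\eta)(t) = 0$ with $\Delta x(t) \neq 0$ is harmless because the image of the whole segment then collapses to a single point in $\Gamma_{x\eta}$, where the ordering is trivial.

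Once this is established, the proof is essentially a one-line inequality. Given $\varepsilon > 0$, for $n$ large enough there exist $(u_n, r_n) \in \Pi(x_n)$ and $(u, r) \in \Pi(x)$ with $|u_n - u|^*_1 \vee |r_n - r|^*_1 < \dM(x_n, x) + \varepsilon$. Then $(u_n\eta, r_n) \in \Pi(x_n \eta)$ and $(u\eta, r) \in \Pi(x\eta)$ by the above, and
\[
\dM(x_n\eta, x\eta) \; \le \; |u_n\eta - u\eta|^*_1 \, \vee \, |r_n - r|^*_1 \; \le \; \bigl(C|\eta| \vee 1\bigr) \bigl(\dM(x_n, x) + \varepsilon\bigr)
\]
for a suitable constant $C$. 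Letting $n \to \infty$ and then $\varepsilon \downarrow 0$ yields convergence on $[0,T]$, and the extension to $[0,\infty)$ follows from \eqref{eq:M1_metric_real_line} as in the J1 case. The main (minor) obstacle is the careful verification that $(u_n\eta, r_n)$ respects the M1 ordering on $\Gamma_{x_n\eta}$; all other steps are routine.
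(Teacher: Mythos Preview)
The paper states this proposition in the appendix as a standard fact about the J1 and M1 topologies but does not supply a proof, so there is no paper argument to compare against directly. Your approach is correct and is the natural one: for J1 the same time change works and the estimate is immediate; for M1 the key point is exactly that $(u,r)\in\Pi(x)$ implies $(u\eta,r)\in\Pi(x\eta)$, which you verify cleanly. Your handling of the non-decreasing property is right: on a jump segment the point $u(z)$ is $\alpha x(t-)+(1-\alpha)x(t)$ for a \emph{unique} $\alpha$ (since $\Delta x(t)\neq 0$), the ordering on $\Gamma_x$ forces $\alpha$ to be monotone in $z$, and this single scalar parameter then controls the ordering on $\Gamma_{x\eta}$ as well, including the degenerate case $\Delta(x\eta)(t)=0$ where the image collapses to a point. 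The surjectivity check is also fine because the linear image of the segment $[x(t-),x(t)]$ under right-multiplication by $\eta$ is precisely the segment $[(x\eta)(t-),(x\eta)(t)]$. One cosmetic remark: you do not need ``for $n$ large enough'' when picking near-optimal parametric representations---such pairs exist for every $n$ by definition of the infimum---but this does not affect the argument.
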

Given two convergent sequences of c\`adl\`ag paths $x_n$ and $y_n$ in dimensions $k$ and $\ell$, respectively, there is generally no guarantee that these sequences will converge together in $(\D_{\R^{k+\ell}}[0,T], \rho)$. A sufficient condition for them to converge together is the following.
\begin{prop} 
	If $x_n \to x$ in $(\D_{\R^{k}}[0,T], \rho)$ and $y_n \to y$ in $(\D_{\R^{\ell}}[0,T], \rho)$, where $\rho \in \{ \dJ, \dM\}$, and $\operatorname{Disc}(x) \cap \operatorname{Disc}(y) = \emptyset$, then $(x_n, y_n) \to (x,y)$ in $(\D_{\R^{k+\ell}}[0,T], \rho)$
\end{prop}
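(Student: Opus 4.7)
I would split the argument into the J1 case, which admits an essentially pathwise construction, and the M1 case, which is more delicate because no time-change representation is available.

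\textbf{The J1 case.} The plan is to glue together time changes. Pick $\lambda_n,\mu_n\in\Lambda$ witnessing $\dJ(x_n,x)\to 0$ and $\dJ(y_n,y)\to 0$, so that $|\lambda_n-\Id|^*_T\vee|\mu_n-\Id|^*_T\to 0$, $|x_n\circ\lambda_n-x|^*_T\to 0$ and $|y_n\circ\mu_n-y|^*_T\to 0$. Given $\varepsilon>0$, enumerate the finitely many jump times $t_1<\dots<t_p$ of $x$ or $y$ with $|\Delta(x,y)|>\varepsilon$; by the hypothesis $\operatorname{Disc}(x)\cap\operatorname{Disc}(y)=\emptyset$, each $t_j$ belongs to exactly one of $D_x$ or $D_y$. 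Define a common time change $\nu_n\in\Lambda$ by setting $\nu_n(t_j)=\lambda_n(t_j)$ for $t_j\in D_x$ and $\nu_n(t_j)=\mu_n(t_j)$ for $t_j\in D_y$, with piecewise linear interpolation between these points and between the endpoints $0,T$. Then $|\nu_n-\Id|^*_T\le|\lambda_n-\Id|^*_T\vee|\mu_n-\Id|^*_T\to 0$. To bound $|x_n\circ\nu_n-x|^*_T$, split $[0,T]$ into small neighbourhoods of the $t_j$'s and their complement: at each $t_j\in D_x$, $x_n\circ\nu_n(t_j)=x_n\circ\lambda_n(t_j)\to x(t_j)$ by construction; at each $t_j\in D_y$, the limit $x$ is continuous, so $x_n\circ\nu_n$ is close to $x$ by uniform convergence at continuity points combined with $\nu_n\to\Id$. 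On the complement, the $\varepsilon$-J1-modulus of continuity of $x$ is below $\varepsilon$, and the same holds for $x_n$ for large $n$. The analogous bound holds for $|y_n\circ\nu_n-y|^*_T$ with the roles of $\lambda_n,\mu_n$ reversed. A diagonalisation in $\varepsilon\downarrow 0$ gives $\dJ((x_n,y_n),(x,y))\to 0$.

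\textbf{The M1 case.} I would use the standard characterisation of M1 convergence (Theorem A.8 / \cite[Thm.~12.5.2]{whitt}) as joint finite-dimensional convergence at a dense set of continuity points of the limit, together with $\lim_{\delta\downarrow 0}\limsup_n w''_\delta(\cdot)=0$ and control at the endpoints $0,T$. The finite-dimensional piece is immediate: the continuity set of the pair $(x,y)$ is the co-countable set $[0,T]\setminus(\operatorname{Disc}(x)\cup\operatorname{Disc}(y))$, and at any such $t$ both coordinates converge, giving joint convergence in $\R^{k+\ell}$. The oscillation piece is where the no-common-jump hypothesis is essential: for every $\varepsilon>0$, enumerate the finitely many jumps of $x$ and $y$ with size exceeding $\varepsilon$ and, using their disjointness, choose pairwise disjoint open neighbourhoods $I^x_1,\dots,I^x_p$ and $I^y_1,\dots,I^y_q$ of these. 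By individual M1 convergence one argues that, for $n$ large, all jumps of $x_n$ (resp.~$y_n$) of size exceeding $\varepsilon$ lie inside the union of the $I^x_i$'s (resp.~$I^y_j$'s). Inside each $I^x_i$, $y_n$ is nearly constant (its $M1$ oscillation being controlled by $w''_\delta(y_n)$ plus small-jump tails), so on this subinterval the joint oscillation $w''_\delta((x_n,y_n))$ is controlled by $w''_\delta(x_n)$ plus the small oscillation of $y_n$; symmetrically inside $I^y_j$; outside all these neighbourhoods both components have oscillation below $\varepsilon$. Taking $\limsup_n$ and then $\delta\downarrow 0$ gives $\lim_{\delta\downarrow 0}\limsup_n w''_\delta((x_n,y_n))=0$. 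Combined with the finite-dimensional convergence, this yields $\dM((x_n,y_n),(x,y))\to 0$.

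\textbf{Expected difficulty.} The J1 case is mostly bookkeeping. The main obstacle is the M1 step: the vector-valued oscillation function $w''_\delta$ does \emph{not} decompose into its one-dimensional analogues, which is exactly what makes the amalgamated M1 topology strictly stronger than the product M1 topology (witness the zigzag example with coinciding jumps discussed after Theorem \ref{thm:3.19}). The role of $\operatorname{Disc}(x)\cap\operatorname{Disc}(y)=\emptyset$ is precisely to rule out this obstruction and allow the neighbourhood-based reduction sketched above.
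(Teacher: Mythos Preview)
The paper does not prove this proposition; it is recorded in Appendix~A as a standard fact and, where it is actually used (e.g.\ at the start of the proof of Proposition~\ref{thm:ContinuityOfSimpleIntegralsInM1Topology}), the authors simply cite it as ``well-known (see e.g.~\cite{whitt})''. So there is no in-paper argument to compare against. The approach in Whitt, and the spirit of the paper's own related constructions (Proposition~\ref{prop:ParameterisationOfSimpleIntegrals}), is via explicit parametric representations rather than the $w''$ modulus, so your M1 route is a genuinely different method.

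Your J1 argument is correct and standard.

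The M1 outline has the right architecture, but one step does not go through as written. In both the ``inside $I^x_i$'' case and the ``outside all neighbourhoods'' case you pass from ``no jumps of size $>\varepsilon$'' to ``small total oscillation'', and that inference is false (take $x(t)=t$, or any path with many sub-$\varepsilon$ jumps). In particular, your justification that $y_n$ is ``nearly constant'' on $I^x_i$ via ``$w''_\delta(y_n)$ plus small-jump tails'' does not work: small $w''$ means nearly monotone, not nearly constant, and $y$ may well have continuous variation or accumulated small jumps inside $I^x_i$. The clean fix is to anchor not on large jumps but on a c\`adl\`ag partition of the \emph{limits}: choose $0=s_0<\cdots<s_m=T$ with $\operatorname{osc}_{[s_{i-1},s_i)}(x)\vee\operatorname{osc}_{[s_{i-1},s_i)}(y)<\varepsilon$, and take $\delta$ smaller than the mesh. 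Any $\delta$-window $[t_1,t_3]$ then straddles at most one $s_i$; by the hypothesis $\operatorname{Disc}(x)\cap\operatorname{Disc}(y)=\emptyset$, at most one of $x,y$ jumps at $s_i$, so the \emph{other} component has total oscillation $<2\varepsilon$ on the window (and both do if no $s_i$ is straddled). Transferring this oscillation bound to $x_n,y_n$ is routine (restrict to a subinterval with continuity endpoints and use M1-continuity of $\sup$ and $\inf$ over such subintervals). With this correction your $w''$ route goes through; the neighbourhood-of-large-jumps reduction alone is not enough.
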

Within the framework of the $M_1$ topology, preservation of convergence through addition can even be attained on the basis of the following weaker assumption given in \cite{whitt}.
\begin{prop}
	If $x_n \to x$ in $(\D_{\R^{k}}[0,T], \dM)$ and $y_n \to y$ in $(\D_{\R^{k}}[0,T], \dM)$ such that that $(x^{(i)}(t) - x^{(i)}(t-)) (y^{(i)}(t) - y^{(i)}(t-)) \ge 0$ for all $0< t \le T$ and $i=1,...,k$, then $x_n+y_n \to x+y$ in $(\D_{\R^{k}}[0,T], \dM)$.
\end{prop}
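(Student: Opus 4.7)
The plan is to work directly with the parametric representations underpinning Definition~\ref{def:M1_metric}. The extension from $[0,T]$ to $[0,\infty)$ being automatic via \eqref{eq:M1_metric_real_line}, I fix $T<\infty$.

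First, using the hypothesised M1 convergences I would pick parametric representations $(u_n,r_n)\in\Pi(x_n)$, $(v_n,s_n)\in\Pi(y_n)$ together with $(u,r)\in\Pi(x)$, $(v,s)\in\Pi(y)$ such that $|u_n-u|^*_1\vee|r_n-r|^*_1$ and $|v_n-v|^*_1\vee|s_n-s|^*_1$ both tend to $0$. The core idea is to synchronize the parameter variables of $x$ and $y$: I want monotone homeomorphisms $\varphi,\psi\colon[0,1]\to[0,1]$ with $r\circ\varphi=s\circ\psi=:t$, engineered so that $\varphi$ and $\psi$ traverse the constancy intervals of $r$ and $s$ (those associated with jumps) simultaneously at each common jump time of $x$ and $y$. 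At any jump of only one of the two limits the other is continuous on that stretch, and so its parametric representation tolerates any further continuous monotone reparametrization without leaving its completed graph. Setting $(\tilde u,t):=(u\circ\varphi,r\circ\varphi)$ and $(\tilde v,t):=(v\circ\psi,s\circ\psi)$ then yields synchronized representations sharing the same time coordinate.

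The heart of the proof will be to check that $(\tilde u+\tilde v,t)\in\Pi(x+y)$. Continuity and surjectivity onto $\Gamma_{x+y}$ are immediate from the synchronization; the delicate point is $\preceq$-monotonicity on $\Gamma_{x+y}$. The only nontrivial case is a common jump time $\tau$: on the corresponding constancy interval of $t$, $\tilde u$ traces $[x(\tau-),x(\tau)]$ via $x(\tau-)+\alpha(\lambda)\Delta x(\tau)$ and $\tilde v$ traces $[y(\tau-),y(\tau)]$ via $y(\tau-)+\beta(\lambda)\Delta y(\tau)$, for nondecreasing $\alpha,\beta\colon[0,1]\to[0,1]$ inherited from the monotonicity of the originals. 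The sum
\[
(\tilde u+\tilde v)(\lambda) \; = \; (x+y)(\tau-)+\alpha(\lambda)\Delta x(\tau)+\beta(\lambda)\Delta y(\tau)
\]
has $i$-th coordinate $(x+y)^{(i)}(\tau-)+\alpha(\lambda)\Delta x^{(i)}(\tau)+\beta(\lambda)\Delta y^{(i)}(\tau)$, and the sign hypothesis $\Delta x^{(i)}(\tau)\Delta y^{(i)}(\tau)\geq 0$ is exactly what ensures this point lies on the line segment of $\Gamma_{x+y}$ at $\tau$ \emph{and} that $|(x+y)^{(i)}(\tau-)-(\tilde u^{(i)}+\tilde v^{(i)})(\lambda)|$ is nondecreasing in $\lambda$ for every $i$, i.e.\ the order $\preceq$ on $\Gamma_{x+y}$ is respected.

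Finally, I would perform the analogous synchronization for each pair $(x_n,y_n)$ via homeomorphisms $\varphi_n,\psi_n$ built around the jump structure of $r_n,s_n$. Restricting explicit tracking to the finitely many jumps of $x,y$ of size at least a threshold $\delta>0$, and using the closeness of $(u_n,r_n),(v_n,s_n)$ to $(u,r),(v,s)$ to match them with jumps of $r_n,s_n$ of comparable size, a routine combinatorial bookkeeping delivers $|\tilde u_n+\tilde v_n-\tilde u-\tilde v|^*_1 \vee |t_n-t|^*_1\to 0$, and hence $\dM(x_n+y_n,x+y)\to 0$ after letting $\delta\downarrow 0$. The main obstacle is precisely the $\preceq$-monotonicity check of the synchronized sum at common jumps: it is exactly here that the sign hypothesis is indispensable, since without it the straight segment in $\Gamma_{x+y}$ is not traced monotonically by the natural sum of synchronized representations.
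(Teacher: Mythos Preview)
The paper does not supply its own proof of this proposition; it is recorded in the appendix as a known result from Whitt~\cite{whitt}, so there is no in-paper argument to compare against.

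Your sketch contains a genuine gap in dimension $k \ge 2$. You assert that the sign hypothesis ensures $(\tilde u + \tilde v)(\lambda) = (x+y)(\tau-) + \alpha(\lambda)\Delta x(\tau) + \beta(\lambda)\Delta y(\tau)$ lies on the line segment of $\Gamma_{x+y}$ at $\tau$, but the strong M1 completed graph (Definition~\ref{def:M1_metric}) demands a \emph{single} convex parameter across all coordinates: membership in the segment requires $\alpha\,\Delta x + \beta\,\Delta y = \gamma(\Delta x + \Delta y)$ for some $\gamma \in [0,1]$, which fails whenever $\alpha \ne \beta$ and $\Delta x(\tau), \Delta y(\tau)$ are linearly independent---a configuration the coordinate-wise sign condition does not exclude. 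Concretely, take $k=2$, $x = (1,0)\ind_{[1,T]}$, $y = (0,1)\ind_{[1,T]}$, $x_n = (1,0)\ind_{[1-1/n,\, T]}$, $y_n = (0,1)\ind_{[1-2/n,\, T]}$: the sign condition holds trivially (one factor vanishes in each coordinate), yet every parametric representation of $x_n + y_n$ passes through $(0,1)$, which stays at sup-norm distance $1/2$ from $\Gamma_{x+y}$, so $\dM(x_n+y_n, x+y) \not\to 0$. Whitt's result (Theorem~12.7.3 in \cite{whitt}) is in fact stated for the \emph{weak} M1 topology---the $k$-fold product of one-dimensional M1 spaces---where the relevant graph is a coordinate-wise product of segments and your argument goes through verbatim; the statement as written with the strong $\dM$ is only unproblematic for $k=1$. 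In that one-dimensional case your approach is essentially correct, though you still gloss over the fact that $x_n, y_n$ themselves need not satisfy the sign condition at their own common jump times, so $(\tilde u_n + \tilde v_n, t_n)$ may fail to lie in $\Pi(x_n+y_n)$; making this rigorous is precisely where your $\delta$-threshold large-jump/small-jump bookkeeping must do real work rather than being declared routine.
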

Marginal projections and running suprema preserve convergence, as stated next.
\begin{prop}
	Let $x_n \to x$ in $(\D_{\R^{k}}[0,T], \rho)$, where $\rho \in \{ \dJ, \dM\}$, and $t \notin \operatorname{Disc}(x)$, then $x_n(t) \to x(t)$ and $|x_n|_t^* \to |x|_t^*$.
\end{prop}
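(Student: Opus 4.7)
The plan is to treat the $\dJ$ and $\dM$ cases separately, first establishing the pointwise convergence $x_n(t) \to x(t)$ and then bootstrapping to the running supremum. A preliminary observation needed in both topologies is that, when $t \notin \operatorname{Disc}(x)$, the map $s \mapsto |x|^*_s$ is itself continuous at $t$: it is right-continuous as a cadlag function (inherited from $x$), while left-continuity at $t$ follows from $|x(t)| = \lim_{s\uparrow t} |x(s)| \le |x|^*_{t-}$, forcing $|x|^*_t = |x|^*_{t-}$.

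For $\dJ$, I would pick time changes $\lambda_n \in \Lambda$ realising the convergence, so that $|\lambda_n - \operatorname{Id}|^*_T \vee |x_n \circ \lambda_n - x|^*_T \to 0$, and set $s_n := \lambda_n^{-1}(t) \to t$. Then $x_n(t) = x_n(\lambda_n(s_n))$ differs from $x(s_n)$ by at most $|x_n \circ \lambda_n - x|^*_T$, and $x(s_n) \to x(t)$ by continuity of $x$ at $t$. The supremum follows from the same change of variables: $|x_n|^*_t = \sup_{r \le s_n} |x_n \circ \lambda_n(r)|$ lies within $|x_n \circ \lambda_n - x|^*_T$ of $|x|^*_{s_n}$, and $|x|^*_{s_n} \to |x|^*_t$ by the preliminary observation.

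For $\dM$, I would pick parametric representations $(u_n, r_n) \in \Pi(x_n)$ and $(u, r) \in \Pi(x)$ with $|u_n - u|^*_1 \vee |r_n - r|^*_1 \to 0$, and set $z_n := \sup\{z \in [0,1] : r_n(z) = t\}$. The ordering $\preceq$ on $\Gamma_{x_n}$ forces the rightmost preimage of $t$ to correspond to $x_n(t)$ (points on the jump segment are ranked by distance from $x_n(t-)$, with $x_n(t)$ being the farthest), so $u_n(z_n) = x_n(t)$. Now $r(z_n) \to t$ by the uniform closeness of $r_n$ and $r$, and since $u(z_n)$ lies on the line segment between $x(r(z_n)-)$ and $x(r(z_n))$, continuity of $x$ at $t$ forces $u(z_n) \to x(t)$; combined with $|u_n(z_n) - u(z_n)| \to 0$, this yields $x_n(t) \to x(t)$.

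For the $\dM$ supremum I would use the pathwise identity $|x_n|^*_t = \sup\{|u_n(z)| : r_n(z) \le t\}$, valid because $u_n$ traces the whole of $\Gamma_{x_n}$ and the supremum on the completed graph coincides with $|x_n|^*_t$. Given $\varepsilon > 0$, choose $\delta > 0$ with $|x|^*_{t+\delta} - |x|^*_{t-\delta} < \varepsilon/2$ using continuity of $|x|^*$ at $t$, and then $n$ large enough that $|u_n - u|^*_1 < \varepsilon/2$ and $|r_n - r|^*_1 < \delta/2$. For the upper bound, $r_n(z) \le t$ implies $r(z) \le t + \delta/2$, whence $|u_n(z)| \le |x|^*_{t+\delta/2} + \varepsilon/2 \le |x|^*_t + \varepsilon$; for the matching lower bound, restrict to $z$ with $r(z) \le t - \delta/2$ (which forces $r_n(z) \le t$) and use $\sup_{r(z) \le t-\delta/2} |u(z)| = |x|^*_{t-\delta/2}$. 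The only real (minor) obstacle is unpacking the geometric $\dM$-definitions---specifically the pathwise identifications $u_n(z_n) = x_n(t)$ and $|x_n|^*_t = \sup\{|u_n(z)| : r_n(z) \le t\}$ via the ordering $\preceq$---after which everything reduces to straightforward $\varepsilon$--$\delta$ bookkeeping.
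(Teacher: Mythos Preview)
Your argument is correct in both topologies. The paper does not actually prove this proposition: it is stated in the appendix as a standard fact about continuous functionals on Skorokhod space, with the reader referred to Whitt's monograph for details. Your direct verification from the metric definitions---time changes for $\dJ$, parametric representations for $\dM$---is a clean self-contained substitute. One cosmetic point: in the $\dM$ argument the parametric representation $(u,r)$ of the limit realising the infimum may in principle depend on $n$, but your estimates go through verbatim with an $n$-dependent $(u^{(n)},r^{(n)})$ since the identities $u^{(n)}(z)\in[x(r^{(n)}(z)-),x(r^{(n)}(z))]$ and $\sup_{r^{(n)}(z)\le s}|u^{(n)}(z)|=|x|^*_s$ hold for any element of $\Pi(x)$.
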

Regarding tightness of stochastic processes on Skorokhod space, via Prokhorov’s theorem, the central ingredient is a characterisation of relatively compact subsets. Akin to the Arzelà–Ascoli theorem for continuous paths, we now recall the key characterisations from \cite{skorokhod}.

\begin{defi}[Moduli of continuity] \label{def:A7}
	For a $x: [0,T] \to \R^k$ and $\theta>0$ we define 
	\begin{align*}
		w'(\alpha, \theta) \; &:= \; \inf{} \biggl\{ \sup{i\le r} \sup{s,t \, \in \, [t_{i-1},t_{i})} |\alpha(s)-\alpha(t)|\; : \; t_0<...<t_r \text{ and } \inf{1\le i \le r}(t_i-t_{i-1})\ge \theta \biggr\}\end{align*}
	with $t_0=0$ and $t_r=T$, where the infimum in $w'$ runs over all partitions of $[0,T]$ with mesh size no smaller than $\theta$, and where we have defined
	\begin{align*}
		&w''(\alpha, \theta) :=  \tilde{w}(\alpha, \theta) \vee  \bar{v}(\alpha,0,\theta) \vee  \bar{v}(\alpha,T,\theta), \\
		&\tilde{w}(\alpha, \theta)  := \sup{0\le t \le T} \sup{} \Bigl\{  \norm{\alpha(t_2) - [ \alpha(t_1), \alpha(t_3)]} \, : \, 0\vee (t-\theta) \le t_1 < t_2 < t_3 \le (t+\theta) \wedge T \Bigr\},\\
		&\bar{v}(\alpha,r,\theta)  :=  \sup{(r-\theta) \vee 0 \le s,t \le (r+\theta) \wedge T} \, |\alpha(t)- \alpha(s)|,
	\end{align*}
	with $[ \alpha(t_1), \alpha(t_3)] := \{ \lambda \alpha(t_1)+(1-\lambda) \alpha(t_3) \, : \, \lambda \in [0,1]\}$ denoting the line segment between $\alpha(t_1)$ and $\alpha(t_3)$ and $\norm{x - A}:= \operatorname{inf}_{y \in A} |x-y|$.
\end{defi}

\begin{theorem} \label{thm:A.8}
	A subset $A \subseteq \D_{\R^k} [0,T]$ is relatively compact for the $J_1$ (respectively $M_1$) topology if and only if 
	\begin{enumerate}[(i)]
		\item $A$ is uniformly bounded, i.e., $ \sup{}_{\!\alpha \in A} \, |\alpha|^*_T \; < \; \infty$, and\vspace{-8pt}
  \item $A$ satisfies  
		$\lim_{\theta \downarrow 0} \; \sup{}_{\!\alpha \in A} \; w (\alpha, \theta) \; = \; 0$, where $w=w'$ (respectively $w=w''$ for $M_1$).
	\end{enumerate}
\end{theorem}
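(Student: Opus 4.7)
My plan is to prove both directions of the characterisation for each topology, with the J1 case following the classical argument of Billingsley and the M1 case requiring the more delicate parametric representation approach of Whitt.

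For the necessity direction, assume $A$ is relatively compact (hence totally bounded). For uniform boundedness (i), I would observe that both metrics dominate the sup norm up to an $\varepsilon$-additive error: for J1 this is immediate from $|\alpha \circ \lambda|^*_T = |\alpha|^*_T$ for any time change $\lambda$, while for M1 it follows from $|\alpha|^*_T \le |u|^*_1$ for any parametric representation $(u,r)$ of $\alpha$. Covering $A$ by finitely many $\varepsilon$-balls around paths $\beta_1,\dots,\beta_m$ then gives $\sup_{\alpha \in A} |\alpha|^*_T \le \max_i |\beta_i|^*_T + \varepsilon < \infty$. For the modulus condition (ii), I would first verify the pointwise property that $w'(\alpha,\theta) \to 0$ and $w''(\alpha,\theta) \to 0$ as $\theta \downarrow 0$ for every fixed càdlàg $\alpha$ — the $w'$ statement being a direct consequence of having only finitely many jumps of a given size, while the $w''$ statement groups these jumps with their `almost monotone' neighbourhoods. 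A standard total-boundedness argument combined with continuity of $\alpha \mapsto w(\alpha,\theta)$ (in the respective metric) then upgrades this pointwise control to the uniform vanishing in (ii).

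For the sufficiency direction, given any sequence $(\alpha_n)_{n\ge 1} \subseteq A$ I would extract a convergent subsequence. In J1, for fixed $\varepsilon>0$ the control on $w'(\alpha_n,\theta)$ combined with uniform boundedness permits an approximation of each $\alpha_n$ by a simple step function $\tilde \alpha_n$ with at most $\lceil T/\theta \rceil$ jumps, all values bounded by the uniform constant from (i). Extracting subsequences of the finitely many jump positions and heights yields a limiting simple step function, and a careful time-change argument delivers convergence in $\dJ$; a diagonal extraction in $\varepsilon\downarrow 0$ then produces a limit in $\D_{\R^k}[0,T]$. In M1, the modulus $w''$ plays an analogous role: it identifies the `almost monotone' zones between consecutive deflections, allowing an approximation of each $\alpha_n$ by a piecewise monotone càdlàg path with a uniformly bounded number of deflection points; one then extracts a convergent subsequence of the corresponding parametric representations, using compactness of $([0,1],|\cdot|)$ and the continuity/monotonicity constraints, and passes to the limit.

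The main obstacle is the M1 sufficiency direction. Unlike J1 (where time changes are just bijections of $[0,T]$ and the extraction of subsequences reduces to finite-dimensional compactness via $|\alpha\circ\lambda|^*_T=|\alpha|^*_T$), in M1 one must track the geometry of the completed graph in $\R^k \times [0,T]$, including the vertical segments at jump points which must be traversed in a consistent order by the approximating parametric representations. Constructing the limit parametric representation, and verifying that it indeed parametrises the completed graph of some càdlàg limit — and not, say, a pathological graph with spurious back-and-forth oscillations at jumps — is where the bulk of the technical work lies, as the order $\preceq$ on $\Gamma_\alpha$ is non-obvious to preserve under pointwise limits. This is precisely the content of Skorokhod's original argument as refined by Whitt, and my plan would be to invoke it rather than reproduce it in full.
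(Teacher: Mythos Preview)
The paper does not actually prove this theorem: it is stated in Appendix~\ref{app:J1_M1_tops} as a classical characterisation of relative compactness, attributed to Skorokhod \cite{skorokhod} (with the M1 version also covered in Whitt \cite{whitt}), and no argument is given. Your proposal is therefore not in competition with any proof in the paper; rather, you are sketching precisely the kind of argument one would find in those references, and the paper is content simply to cite them.
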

Recall $N_\delta^T$ from \eqref{eq:maxnumosc} which counts the number of increments above size $\delta>0$.
\begin{cor} \label{cor:A9}
	Let $A \subseteq \D_{\R^k} [0,T]$ be relatively compact for the $J_1$ or $M_1$ topology. Then, for any $\delta>0$, it holds
	$ \sup{}_{\!\alpha \in A}\; N^T_\delta(\alpha) \; < \; \infty.$
\end{cor}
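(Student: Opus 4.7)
The plan is to read the bound directly off Theorem~\ref{thm:A.8}. Condition~(i) gives $M := \operatorname{sup}_{\alpha \in A}|\alpha|_T^* < \infty$, while condition~(ii) allows one to choose $\theta > 0$ so that $\operatorname{sup}_{\alpha \in A} w(\alpha,\theta)$ is as small as desired, where $w = w'$ (resp.\ $w''$) in the J1 (resp.\ M1) case.

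For J1, I would take $\theta$ with $\operatorname{sup}_{\alpha \in A} w'(\alpha,\theta) < \delta$. For each $\alpha \in A$ there is then a partition $0 = s_0 < s_1 < \cdots < s_r = T$ with $r \le T/\theta$ and oscillation strictly less than $\delta$ on each $[s_{j-1}, s_j)$. Any pair $(t_{2i-1}, t_{2i})$ witnessing $|\alpha(t_{2i}) - \alpha(t_{2i-1})| \ge \delta$ must therefore have its endpoints in distinct subintervals; since the pairs are ordered in time, the corresponding sequence of ``starting-interval indices'' is strictly increasing in $i$, so $n \le r \le T/\theta$ uniformly in $\alpha$.

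The M1 case is more delicate. I would first reduce to $k = 1$ by coordinate projection: the projection $\pi_j$ is a contraction on the line segments appearing in $\tilde{w}$, so $w''(\alpha^{(j)}, \theta) \le w''(\alpha, \theta)$ for each $j$, and a pigeonhole argument (each pair has at least one coordinate with oscillation $\ge \delta/\sqrt{k}$) reduces matters to a uniform scalar bound. In the scalar case, I would pick $\theta$ with $\operatorname{sup}_{\alpha \in A} w''(\alpha, \theta) < \delta/4$ and split $[0,T]$ into $\lceil T/\theta \rceil$ windows of length $\theta$. Pairs straddling a window boundary contribute at most one per boundary. For pairs $(s_i, s'_i)_{i=1}^m$ lying entirely within a single window, applying the $\tilde{w}$-bound to the triple $(s_i, s'_i, s_{i+1})$ yields $|\alpha(s_{i+1}) - \alpha(s_i)| \ge 3\delta/4$, with the same sign as $\alpha(s'_i) - \alpha(s_i)$; applying it to $(s_{i-1}, s_i, s_{i+1})$ then rules out any sign change between consecutive pairs (a pair going up followed by one going down would force $\alpha(s_i)$ to sit more than $\delta/4$ above both neighbours, violating the $\tilde{w}$-bound). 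Hence $\alpha(s_1), \ldots, \alpha(s_m)$ is monotone with step $\ge 3\delta/4$, and the uniform sup-bound $M$ forces $m \le 8M/(3\delta) + 1$ per window.

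The main obstacle is precisely this last step: unlike in J1, where a single oscillation of size $\ge \delta$ already spans a subinterval of a good partition, in M1 one can in principle fit arbitrarily many $\delta$-oscillations into a small window by arranging them along a nearly monotone trajectory. The crucial observation is that the $\tilde{w}$-modulus forbids a sign reversal of consecutive oscillations within a single window, thereby reducing the count to a monotone-path estimate that is governed by the global sup-norm $M$.
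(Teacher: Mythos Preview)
Your argument is correct and follows essentially the same approach as the paper's heuristic sketch: the J1 case is read off directly from a $w'$-partition, and the M1 case hinges on the observation that a small $\tilde w$-modulus forbids sign reversals of consecutive $\delta$-increments within a short window, forcing a monotone run whose length is controlled by the uniform sup bound. Your version is simply more explicit and quantitative than the paper's contradiction sketch, but the underlying idea is identical.
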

\begin{proof}
    The $J_1$ statement follows easily from Theorem \ref{thm:A.8}. For $M_1$, we sketch a contradiction proof in the case $k=1$. If the number were unbounded, there exist $\alpha_n\in A$ with increasing numbers of $\delta$-increments. Uniform boundedness on $A$ then implies that the increments cannot all occur along a single monotone excursion, so we obtain an unbounded number of consecutive increments that \emph{backtrack} by at least $\delta/2$. On $[0,T]$ these must concentrate on increasingly small intervals, forcing $\operatorname{sup}_{\alpha \in A} \tilde w(\alpha, \theta) \ge \delta/2$, in contradiction with (ii) of Theorem~\ref{thm:A.8}.
\end{proof}

\section{Appendix: The P-UT and UCV conditions}\label{app:UCV_and_PUT}

The P-UT condition was introduced in \cite{stricker}, and is the starting point of \cite{jakubowskimeminpages} as well as \cite{jakubowski}.

\begin{defi}[Predictable uniform tightness]\label{def:PUT}
	Let $(X^n)_{n\ge 1}$ be a sequence of $\R^d$-valued semimartingales on filtered probability spaces $(\Omega^n, \mathcal{F}^n, \mathbbm{F}^n, \Pro^n)$. The $X^n$ are said to be \emph{predictably uniformly tight} (P-UT) on $[0,T]$ if the family of random variables 
	$$ \mathcal{A}_t \; := \; \union{n\in\N}{} \left\{ \int_0^t  H_s \, \diff X^n_s \; : \; H \in \textbf{S}(X^n), \, |H|^*_t \le 1 \right\}$$
	is tight in $\R$ for all $t\in [0,T]$, where $\textbf{S}(X^n)$ denotes the space of all real-valued simple predictable processes on the filtered probability space $(\Omega^n, \mathcal{F}^n, \mathbbm{F}^n, \Pro^n)$ where $X^n$ is defined.
\end{defi}

The authors of \cite{jakubowskimeminpages} themselves pointed out that the abstract P-UT condition \emph{`does not seem to be readily verifiable'} (translated from French). Yet, we stress that there are several insightful equivalent characterisations: see \cite[Thms.~IV.6.15, IV.6.16, \& IV.6.21]{shiryaev}, \cite[Lem.~3.1]{jakubowskimeminpages}, and \cite[Thm.~1.4]{meminslominski}. Unlike \eqref{eq:Mn_An_condition}, these are all stated in terms of the semimartingale characteristics or the canonical decompositions obtained after suitably truncating the large jumps.

In a later work, \cite{kurtzprotter} introduced the slightly weaker notion of \emph{uniformly controlled variations} (UCV). First, one must suitably truncate the large jumps through the mapping
$$ (J_\delta(x))(t) \; := \; \sum_{0 < s \le t} \; \Bigl( \Bigl(1- \frac{\delta}{|\Delta x(s)|}\Bigr) \, \vee \, 0 \Bigr) \, \Delta x(s), $$
for $x \in \D_{\R^d}[0,\infty)$. Then, for a given sequence $(X^n)_{n\geq 1}$, one defines $X^{n,\delta}:= X^n - J_\delta(X^n)$. Note that when $X^n$ is a semimartingale, so is $X^{n,\delta}$, since $J_\delta(X^n)$ is of finite variation.
\begin{defi}[Uniformly controlled variations]\label{defn:UCV} 
	Let $(X^n)_{n\ge 1}$ be a sequence of $\R^d$-valued semimartingales on filtered probability spaces $(\Omega^n, \mathcal{F}^n, \mathbbm{F}^n, \Pro^n)$. Fixing $\delta \in (0,\infty]$, the $X^n$ are said to have \emph{uniformly controlled variations} (UCV), for this $\delta$, if there are decompositions $X^{n, \delta}= M^{n, \delta} + A^{n,\delta}$, where the $M^{n, \delta}$ are $\mathbbm{F}^n$-local martingales and the $A^{n,\delta}$ are $\mathbbm{F}^n$-adapted finite variation processes so that: (i) for each $\alpha>0$ there are $\mathbbm{F}^n$-stopping times $\tau_\alpha^{n}$ with $\Pro^n \left( \tau_\alpha^{n} \le \alpha\right) \le \alpha^{-1}$ for all $n\ge 1$, and (ii) for all $t>0$, we have
	\begin{align}
		\sup{n\ge 1} \; \E^n \left[ \, [ M^{n,\delta} ]_{t\wedge \tau_\alpha^{n}} \; + \; \operatorname{TV}_{[0\, , \, t\wedge \tau_\alpha^{n}]}(A^{n,\delta}) \, \right] \; < \; \infty.  \label{eq:UCV} 
	\end{align}
\end{defi}

It was shown in \cite{kurtzprotter} that the two definitions are equivalent under $J_1$ tightness. The P-UT condition implies the UCV condition. Conversely, if $(X^n)_{n\ge 1}$ satisfies the UCV condition and $\operatorname{TV}_{[0,t]}(J_\delta^n(X^n))$ is tight, for all $t> 0$, then also the P-UT condition is satisfied.

\section{Appendix: Good decompositions}

We give a characterisation of \eqref{eq:Mn_An_condition} that makes a precise link to the P-UT and UCV conditions in \cite{jakubowskimeminpages, kurtzprotter}. The necessary part (or more precisely Corollary \ref{rem:GD_necessity}) resembles \cite[Thm.~4]{KP_Durham} on the necessity of P-UT and UCV for the notion of `good integrators' in \cite{KP_Durham}.

\begin{prop}[`Continuity' characterisation of good decompositions] \label{prop:necessity_GD}
	Let $(X^n)_{n\ge 1}$ be a semimartingale sequence and consider all càdlàg adapted integrands $H^n$ on the same filtered probability spaces as the $X^{n}$. Then, the sequence $(X^n)_{n\ge 1}$ enjoys the property that
	\begin{equation}\label{eq:GD_indispensable}\int_0^\bullet \, H^{n}_{s-} \, \diff X^{n}_s \; \Rightarrow \; 0 \quad \text{ on } \quad (\D_{\R^{d}}[0,\infty), \, \dM),
	\end{equation}
whenever $H^{n} \to 0$ ucp, if and only if it has good decompositions \eqref{eq:Mn_An_condition} and $|X^n|_t^*$ is tight, for all $t> 0$. Furthermore, when this is the case, one can take $M^n$ and $A^n$ in \eqref{eq:Mn_An_condition} to be such that $|\Delta M^n|\le 2$, $A^n = J^n + \tilde{A}^n$ with $\tilde{A}^n$ predictable and $J^n = \sum_{s\leq \bullet} \Delta X^n_s \, \ind_{\{|\Delta X^n_s|\geq 1\}}$.
\end{prop}
\begin{proof} It suffices to consider $d=1$. We begin with the `if' part and let $X^n=M^n+A^n$ satisfy \eqref{eq:Mn_An_condition}. Since $A^n$ is of tight total variation, we have \eqref{eq:GD_indispensable} with $A^n$ as the integrators. Moreover, as we assume $|X^n|_t^*$ is tight, for all $t>0$, the tight total variation of $A^n$ gives that $|M^n|_t^*$ is tight for all $t>0$. Consequently, using the control on the jumps of $M^n$ in \eqref{eq:Mn_An_condition}, it follows from Lemma \ref{prop:Lenglart}(b) that also $[M^n]_t$ is tight, for all $t>0$, and Lemma \ref{prop:Lenglart}(c) then gives that \eqref{eq:GD_indispensable} holds with $M^n$ as the integrators. For the `only if' part, notice first that, for any $t>0$, we can immediately deduce the tightness of $|X^n|^*_t$ from \eqref{eq:GD_indispensable}. Similarly, we can then see that, for any subsequence $(X^{n_k})_{k\geq1}$ and $t>0$, $R_k \rightarrow \infty$ implies
	\[
	\lim_{k\rightarrow \infty} \; \mathbb{P}^{n_k}\Bigl( \Bigl| \int_0^\bullet \, X^{n_k}_{s-} \,\diff X^{n_k}_s \Bigr|_t^* > R_k  \Bigr)=0,
	\]
	since $|X^{n}|_t^*$ is tight and hence $X^{n_k}/R_k \rightarrow 0$ ucp, so \eqref{eq:GD_indispensable} applies. Thus, $\int_0^t X^n_{s-} \diff X^n_s$ is tight, for all $t\geq0$. Writing $[X^n]_t=(X^n_t)^2-(X_0^n)^2 - 2\int_0^t X^n_{s-} \diff X^n_s$, we conclude that $[X^n]_t$ is tight. Now let $X^n-J^n=M^n+\tilde{A}^n$ be the canonical decomposition of the special semimartingale $X^n-J^n$. Then $X^n=M^n +A^n$ with $A^n=J^n + \tilde{A}^n$, and recall that it satisfies $|\Delta M^n_t|,|\Delta \tilde{A}^n_t| \leq 2$, for $t\geq 0$, with $\tilde{A}^n$ predictable. Observe that $J^n$ is of tight total variation, since
	$$ \operatorname{TV}_{[0,\bullet]}(J^n) \; =\;  \sum_{s \leq \bullet} |\Delta X^n_s| \, \ind_{\{|\Delta X^n_s|\geq 1\}} \; \leq \; [X^n],$$
	where $[X^n]_t$ is tight for all $t>0$. Therefore it remains to confirm that $\tilde{A}^n$ must be of tight total variation. To this end, set $\sigma^n_c := \operatorname{inf} \{ t>0 : |M^n|^*_t \geq c \,\; \text{or}\, \operatorname{TV}_{[0,t]}(\tilde{A}^{n}) \geq c \}$ and let $\tau^n$ be an arbitrary bounded $\mathbb{F}^n$-stopping time. By boundedness of the jumps and the predictability of $\tilde{A}^n$, we have $\mathbb{E}^n[\, [M^n,\tilde{A}^n]_{\sigma^n_c \land \tau^n}]=0$, so Fatou's lemma gives
	\[
	\mathbb{E}^n[\, [M^n]_{\tau^n}] \leq \liminf_{c \rightarrow \infty} \, \mathbb{E}^n[\, [M^n]_{\sigma^n_c \land \tau^n}] \leq \liminf_{c \rightarrow \infty} \, \mathbb{E}^n[\, [M^n+\tilde{A}^n]_{\sigma^n_c \land \tau^n}] \leq \mathbb{E}^n[\, [X^n]_{\tau^n}].
	\]
	Thus, the uniform bound on the jumps of $M^n$ and Lenglart's inequality \cite[Lem.~I.3.30]{shiryaev} gives tightness of $[M^n]_t$, for all $t\geq 0$. Since $\tilde{A}^n$ is predictable, for any $R_n\rightarrow \infty $ and $t>0$,  we can find c\`adl\`ag processes $H^{n}$ with $|H^{n}|_t^*\le 1$ such that
	\[\Pro^{n}\Bigl( \Bigl| \operatorname{TV}_{[0,t]}(\tilde{A}^{n}) - \int_0^t H^{n}_{s-} \diff \tilde{A}^{n}_s\Bigr| \le 1\Bigr)> 1-1/R_n.
	\]
	As in the `if' part of the proof, we deduce from tightness of $\operatorname{TV}_{[0,t]}(J^n)$ and $[M^n]_t$, for all $t>0$, that $R_n^{-1}\int_0^\bullet H^n_{s-} \diff (M^n +J^{n})_s \rightarrow 0$. Now suppose, for a contradiction, that $\tilde{A}^n$ fails to be of tight total variation on some compact interval $[0,t]$. Then there is a $\delta >0$ so that, along a suitable subsequence, we have $\Pro^{n_k}(\operatorname{TV}_{[0,t]}(A^{n_k})> \tilde{R}_{k} +1 ) \geq \delta$ and hence
	\[
	\Pro^{n_k}\Bigl(\int_0^t H^{n_k}_{s-} \diff \tilde{A}^{n_k}_s > \tilde{R}_{k} \Bigr) \geq  \delta - 1/\tilde{R}_k,
	\]
	with $\tilde{R}_k \rightarrow \infty$. But this would contradict \eqref{eq:GD_indispensable}, in view of what we just deduced, so $\tilde{A}^n$ must be of tight total variation, and hence we have good decompositions of the desired form.
\end{proof}

A straightforward argument shows that \eqref{eq:GD_indispensable} implies the P-UT condition. Conversely, P-UT implies \eqref{eq:GD_indispensable} via the characterisation in \cite[Thm.~VI.6.15]{shiryaev}. Moreover, \cite[p.1068-69]{kurtzprotter} showed that P-UT is equivalent to UCV when there is tightness of both $\{|X^n|_t^*\}_{n\ge 1}$ and $\{N^t_\delta(X^n)\}_{n\ge 1}$ for some $\delta>0$ and all $t> 0$. $M_1$ tightness implies the latter by Corollary \ref{cor:A9}. Hence, \eqref{eq:Mn_An_condition} is equivalent to the P-UT and  UCV conditions when the sequence $(X^n)_{n\ge 1}$ is tight in $M_1$.

\begin{cor}[`Necessity' of \eqref{eq:Mn_An_condition}]\label{rem:GD_necessity} Let $(X^n)_{n\geq 1}$ be semimartingales such that $X^n \Rightarrow X$ on $(\D_{\R^d}[0,\infty), \dM)$. Suppose $X$ is also semimartingale and \eqref{eq:concerted_stoch_int_conv} holds whenever \eqref{eq:joint_conv} and \eqref{eq:oscillcond} are satisfied for the given $(X^n)_{n\geq 1}$. Then $(X^n)_{n\geq 1}$ has good decompositions \eqref{eq:Mn_An_condition}.   
\end{cor}
\begin{proof} With $H^{n} \to 0$, $(H^n,X^n)$ satisfies \eqref{eq:oscillcond} by (ii) of Proposition \ref{prop:3.3}, and we also get \eqref{eq:joint_conv}, i.e., weak convergence of $(H^n,X^n)$ to $(0,X)$ on the $M_1$ product space. Thus, \eqref{eq:concerted_stoch_int_conv} applies and gives that $(X^n)_{n\geq 1}$ satisfies \eqref{eq:GD_indispensable}, so the claim follows from Proposition \ref{prop:necessity_GD}.
\end{proof}

Proposition \eqref{prop:necessity_GD} gives a simple, elegant way of seeing the following.

\begin{lem}[Stochastic calculus with \eqref{eq:Mn_An_condition}]\label{eq:integrals_Ito_GD} 
	Let $(X^n)_{n\ge 1}$ have \eqref{eq:Mn_An_condition} and let $|X^n|^*_t$ be tight, for all $t>0$. Consider
	\[
	\diff Z^n_s=G^n_{s-} \,\diff X^n_s\quad \text{or} \quad \diff Z^n_s=\diff f_n(s,X^n_s),
	\]
	for $f_n\in C^{1,2}([0,\infty)\times\mathbb{R}^d;\mathbb{R})$, where also the $|G^n|_t^*$ are tight, for all $t>0$, and $f_n$ and its derivatives are uniformly bounded in $n$ on compacts. Then $(Z^n)_{n\geq 1}$ also has \eqref{eq:Mn_An_condition}.
    \end{lem}
    \begin{proof} It suffices to confirm \eqref{eq:GD_indispensable} for the $Z^n$. The first case is immediate from \eqref{eq:GD_indispensable} for the $X^n$. In the second case, we apply Itô's formula and verify \eqref{eq:GD_indispensable} individually for each term, again utilising \eqref{eq:GD_indispensable} for the $X^n$. In this respect, note that, if $X^n=M^n+A^n$ are good decompositions as given in Proposition \ref{prop:necessity_GD}, then $[M^n]_t$ is tight, for every $t>0$ (this follows from Lemma \ref{prop:Lenglart}(b), since $|\Delta M^n|\le 1$ uniformly and $|M^n|^*_t$ is tight, due to the tight total variation of $A^n$ and tightness of $|X^n|_t^*$). Hence, one readily deduces the tightness of $[X^n]_t$, for every $t>0$. Furthermore, on behalf of Taylor's theorem,
	$$ \sum_{0\le s \le t} \, \Big| \Delta f_n(s,X^n_s) \, - \, \sum_{i=1}^d \frac{\partial}{\partial x_i} f_n(s- ,X^{n,(i)}_{s-}) \, \Delta X^{n,(i)}_{s}\Big| \; \lesssim \; \sum_{0\le s \le t} \, \sum_{i=1}^d |\Delta X^{n,(i)}_{s}|^2 $$
	on $\{|X^n|^*_t \le K\}$, where `$\lesssim$' denotes inequality up to a multiplicative constant depending only on $t$ and $K$. Thus, the corresponding terms appearing from Itô's formula can be controlled by means of the tightness of the quadratic variations $[X^n]_t$.
    \end{proof}

	\vspace{2pt}

\noindent\textbf{Acknowledgments.} We thank Alda\"{i}r Petronilia for valuable contributions to Section \ref{sec:6.5}, an early version of which was part of an unpublished note on the $M_1$ topology. We also thank the associate editor and an anonymous referee for their helpful comments which have significantly sharpened the presentation. Finally, we are grateful to Ben Hambly for useful comments on the manuscript, and we would like to thank Philip Protter and Richard Davis for fruitful discussions and suggestions during a visit of AS to the Department of Statistics at Columbia University. The research of FW was funded by the EPSRC grant EP/S023925/1.

	\vspace{6pt}

\noindent\textbf{Data availability statement.}
No data were created or analysed in this work. Data sharing is not applicable to this article.




 \printbibliography

\end{document}